\documentclass[11pt,twoside]{article}
\usepackage{amsfonts}
\usepackage{amssymb}

\usepackage{latexsym}
\usepackage{graphicx}
\usepackage{amsmath}
\usepackage{hyperref}
\numberwithin{equation}{section}

\setlength{\evensidemargin}{+0.30in}
\setlength{\evensidemargin}{+0.00in} \setlength{\oddsidemargin}
{+0.30in} \setlength{\oddsidemargin} {+0.00in}
\setlength{\textwidth}     {+6.50in} \setlength{\topmargin}
{-0.50in} \setlength{\topmargin}     {+0.00in}
\setlength{\textheight}    {+8.50in}
\parskip=3pt
\normalsize \makeatletter
\newenvironment{proof}{{\em Proof.} }{\hfill$\Box$\vspace{0.1in}}

\newtheorem{definition}{Definition}[section]
\newtheorem{proposition}{Proposition}[section]

\newtheorem{lemma}{Lemma}[section]

\newtheorem{remark}{Remark}[section]
\newtheorem{theorem}{Theorem}[section]
\newtheorem{assumption}{Assumption}[section]

\newcommand{\be}{\begin{equation}}
\newcommand{\ee}{\end{equation}}
\newcommand{\bea}{\begin{eqnarray}}
\newcommand{\eea}{\end{eqnarray}}

\newtheorem{thm}{Theorem}[section]

\newtheorem{prop}[thm]{Proposition}

\begin{document}

\title{ The nonlinear Schr\"odinger equations with combined nonlinearities of  power-type and
Hartree-type \thanks {This work is supported by NSFC 10571158. }}

\author{ Daoyuan Fang and  Zheng Han \thanks {email: DF:
dyf@zju.edu.cn, HZ: hanzheng5400@yahoo.com.cn}\\
Department of Mathematics, Zhejiang University,\\ Hangzhou 310027,
China,}

 \maketitle
\begin{abstract}
This paper is devoted to a comprehensive study of the nonlinear
Schr\"odinger equations with combined nonlinearities of the
power-type and  Hartree-type
 in any dimension $n\ge 3$. With some structural
conditions, a nearly whole picture of the interactions of these
nonlinearities  in the energy space is given. The method is based on
the Morawetz estimates and
perturbation principles.\\\\
{\textit{Keywords}}: Global well-posedness; scattering; blow up;
Morawetz estimates; perturbation principles.
\end{abstract}

\section{Introduction}
\ \quad We are concerned with the Cauchy problem for the following
Schr\"odinger equation
\begin{eqnarray}\label{1}
 \left\{
\begin{array}{ll}
iu_t+\Delta u=\lambda_1|u|^pu+\lambda_2(|x|^{-\gamma}\ast
|u|^2)u\\
u(0,x)=u_0(x),
\end{array}
\right.
\end{eqnarray}
where $u(t,x)$ is a complex-value function in spacetime
$\mathbb{R}\times\mathbb{R}^n(n\geq3)$, initial datum $u_0$ takes
value in $H_x^1(\mathbb{R}^n)$(or $\sum=\left\{u\in
H_x^1(\mathbb{R}^n):|\cdot|u(\cdot)\in
L_x^2(\mathbb{R}^n)\right\}$), $\lambda_1$ and $ \lambda_2$ are
nonzero constants, $0<p\leq\frac{4}{n-2}$, and $ 0<\gamma\leq4$ with
$n>\gamma$. For such a problem, T. Cazenave has given a fundamental
discussion  in \cite{19}. However, just a few cases he has settled,
for example when both nonlinearities are defocusing, the equation
must be energy-subcritical; when one nonlinearity is focusing, and
the index of the nonlinearity lies between mass-critical and
energy-critical, need mass and energy sufficiently small. In very
recent years, there are many results on the global well-posedness
for the following energy-critical \eqref{2} and \eqref{4} or
mass-critical \eqref{3} and \eqref{5} nonlinear Schr\"odinger
equation have been obtained by T. Tao, J. Colliander and Carlos E.
Kenig and so on, respctively. \cite{2,3,4,7,10,11,15,17, 18}
\begin{eqnarray}\label{2}
 \left\{
\begin{array}{ll}
iu_t+\Delta u=\lambda_2(|x|^{-4}\ast
|u|^2)u\\
u(0,x)=u_0(x)
\end{array}
\right.
\end{eqnarray}
\begin{eqnarray}\label{4}
 \left\{
\begin{array}{ll}
iu_t+\Delta u=\lambda_1|u|^{\frac{4}{n-2}}u\\
u(0,x)=u_0(x)
\end{array}
\right.
\end{eqnarray}

\begin{eqnarray}\label{3}
 \left\{
\begin{array}{ll}
iu_t+\Delta u=\lambda_2(|x|^{-2}\ast
|u|^2)u\\
u(0,x)=u_0(x)
\end{array}
\right.
\end{eqnarray}
\begin{eqnarray}\label{5}
 \left\{
\begin{array}{ll}
iu_t+\Delta u=\lambda_1|u|^{\frac{4}{n}}u\\
u(0,x)=u_0(x)
\end{array}
\right.
\end{eqnarray}
Therefore,  in this paper, we want to give a whole picture of the
interactions of these both nonlinearities. First of all, we hope to
solve the same problem of \eqref{1} when one nonlinearity is
energy-critical. Then, we discuss the case
$\lambda_1\cdot\lambda_2<0$ that T. Cazenave didn't take care of but
separately. Precisely, we hope that under some structural
conditions, that is, under some relations of $\lambda$ and $p$, the
defocusing term is able to control the focusing term, so that the
whole nonlinearities behaviour like the defocusing property,
therefore there is a global wellposed behaviour to be appeared
because the defocusing nonlinearity will amplify the dispersive
effect of the linear equation, but the focusing one usually is to
cancel the dispersive effect.\par
 Schr\"{o}dinger equation \eqref{1} has two conservation laws: energy conservation and mass conservation, where energy
 and mass are defined as follow:
 \begin{eqnarray*}
E(u(t)):&=&\frac{1}{2}\int|\nabla
u|^2\,dx+\frac{\lambda_1}{p+2}\int|u|^{p+2}\,dx+\frac{\lambda_2}{4}\int\left(|x|^{-\gamma}\ast|u|^2\right)|u|^2\,dx\\
M(u(t)):&=&\int|
u|^2\,dx
 \end{eqnarray*}
As they are conserved, we'll prefer to write $E(u)$ for $E(u(t))$
and $M(u)$ for $M(u(t))$.\par
 Our first main theorem is  as following:
\begin{theorem}(Global well-posedness)\label{1.1}
Let $u_0\in H_x^1$. Then there exists a unique global solution u to
\eqref{1} in each of the following cases:
\begin{enumerate}
  \item \label{case 1} when $\lambda_1,\lambda_2>0,\
        0<p\leq\frac{4}{n-2},\ 0<\gamma\leq4$ with $\gamma<n$ except for
        $(p,\gamma)=(\frac{4}{n-2},4)$.
  \item \label{case 2} when $\lambda_1>0,\ \lambda_2<0,$
        \begin{description}
          \item[2.1]  $0<p\leq\frac{4}{n-2}$, and
          $0<\gamma<\min{\{n,\frac{np}{2}\}}$.
          \item[2.2] $\frac{np}{2}\leq\gamma<2$.
          \item[2.3]  $\frac{np}{2}\leq\gamma=2,$ and $ \parallel u_0\parallel_{L^2}^2<\frac{1}{|\lambda_2|}\parallel
          W\parallel_{L^2}^2$.
          \item[2.4]  $\frac{np}{2}\leq\gamma=4\ (n>4),\ E<\frac{\tilde{E}(W)}{|\lambda_2|},\ \parallel \nabla u_0\parallel_{L^2}^2<\frac{1}{|\lambda_2|}\parallel \nabla
          W\parallel_{L^2}^2$ and $u_0$ is radial except for
          $(p,\gamma)=(\frac{4}{n-2},4)$.
          \item[2.5] $\frac{np}{2}\leq\gamma,\ 2<\gamma<\min{\{4,n\}},\
          EM^\frac{4-\gamma}{\gamma-2}<(\frac{1}{2}-\frac{1}{\gamma})\left[\frac{2\gamma\tilde{E}(W)}{|\lambda_2|(\gamma-2)}\right]^{\frac{2}{\gamma-2}}$
          \\and $\parallel \nabla u_0\parallel_{L^2}^{2}M^\frac{4-\gamma}{\gamma-2}<\left(\frac{\parallel\nabla
          W\parallel_{L^2}^2}{|\lambda_2|}\right)^{\frac{2}{\gamma-2}}$,
        \end{description}
        where W is the solution of ground state:
          $W+\left(|x|^{-\gamma}\ast|w|^2\right)W=\frac{4-\gamma}{\gamma}W$\\
          and $\tilde{E}(W):=\frac{1}{2}\int|\nabla
          W|^2\,dx-\frac{1}{4}\int\left(|x|^{-\gamma}\ast|w|^2\right)|W|^2\,dx$.
  \item \label{case 3} when $\lambda_1<0,\ \lambda_2>0$,
        \begin{description}
          \item[3.1]  $0<p<\max{\{\frac{4}{n},
          \frac{4}{2+n-\gamma}\}},$ and $
          0<\gamma\leq4$ with $\gamma<n$.
          \item[3.2]  $p=\frac{4}{n},\ p\geq\frac{4}{2+n-\gamma},$ and $ \parallel u_0\parallel_{L^2}<|\lambda_1|^{-\frac{n}{4}}\parallel
          R\parallel_{L^2}$.
          \item[3.3]  $\frac{4}{2+n-\gamma}\leq
          p=\frac{4}{n-2}$ except for
        $(p,\gamma)=(\frac{4}{n-2},4)$, in addition, \\
          if $n\geq5,$ require $E<|\lambda_1|^{\frac{2-n}{2}}\tilde{E}(R),
         \parallel \nabla u_0\parallel_{L^2}^2<|\lambda_1|^{\frac{2-n}{2}}\parallel
          \nabla R\parallel_{L^2}^2$, \\
          if $n=3,4$,  $u_0$ is radial.
          \item[3.4] $\frac{4}{n}<p<\frac{4}{n-2}$, and $ \frac{4}{2+n-\gamma}\leq
          p$ with\\
          \begin{eqnarray*}
          EM^\frac{4-(n-2)p}{np-4}&<&|\lambda_1|^{\frac{4}{4-np}}\left(\frac{2np}{np-4}\right)^{\frac{4-(n-2)p}{np-4}}
          \left(\tilde{E}(R)\right)^\frac{2p}{np-4},\\
          \parallel \nabla u_0\parallel_{L^2}^{2}M^\frac{4-(n-2)p}{np-4}&<&|\lambda_1|^{\frac{4}{4-np}}\parallel
          \nabla R\parallel_{L^2}^{\frac{4p}{np-4}},
          \end{eqnarray*}
          \end{description}
          where R is the solution of ground state:
          $\Delta R+|R|^pR=\frac{4-(n-2)p}{np}R$\\
          and $\tilde{E}(R):=\frac{1}{2}\int|\nabla
          R|^2\,dx-\frac{1}{p+2}\int|R|^{p+2}\,dx$.
  \item \label{case 4} $\lambda_1<0,\ \lambda_2<0,\
  0<p<\frac{4}{n},$ and $
  0<\gamma<2$.
\end{enumerate}
Moreover, for all compact intervals $I$, the global solution
satisfies the following spacetime bound:
\begin{equation}
\parallel u\parallel_{S^1(I\times\mathbb{R}^n)}\leq
C(|I|,E,M).\label{1.6}
\end{equation}
\end{theorem}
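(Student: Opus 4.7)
My plan is to combine a standard Strichartz-based local theory with case-by-case a priori $H^1$ bounds, and then to upgrade these to the spacetime bound \eqref{1.6} by a perturbation argument from the known results on the pure equations \eqref{2}--\eqref{5}. First I would set up local well-posedness in $H^1_x$ for \eqref{1} by a contraction in a mixed Strichartz space: the Hartree term is controlled via the Hardy-Littlewood-Sobolev inequality together with Sobolev embedding, while the power term is handled by the usual fractional chain rule, the condition $0<p\le\frac{4}{n-2}$ placing $|u|^pu$ inside the energy-critical range of the Strichartz theory. By the blow-up alternative, existence then reduces to an a priori bound on $\|u(t)\|_{H^1_x}$ on compact intervals.

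The second step is this a priori bound. In case \ref{case 1} every term in $E(u)$ is nonnegative, so $\tfrac12\|\nabla u\|_{L^2}^2\le E(u_0)$ immediately. For the mixed-sign cases \ref{case 2}, \ref{case 3}, the guiding principle is that the defocusing nonlinearity must dominate the focusing one: when the focusing part is strictly subcritical relative to the scaling of the defocusing part, an HLS/Gagliardo-Nirenberg interpolation yields $\|\nabla u\|_{L^2}^2\le 2E(u_0)+C(M(u_0))\,\|\nabla u\|_{L^2}^{2\theta}$ with $\theta<1$, which covers subcases 2.1, 2.2, 3.1 as well as case \ref{case 4} (via the mass-subcritical interpolation). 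In the scaling-critical subcases the sharp Gagliardo-Nirenberg constant is attained at the ground state $W$ or $R$, which is exactly why the thresholds are phrased in terms of $\|W\|_{L^2}$, $\|\nabla W\|_{L^2}$, $\tilde E(W)$, $\|\nabla R\|_{L^2}$ and $\tilde E(R)$; a continuity/bootstrap argument then shows that the hypotheses on $E$ and $\|\nabla u_0\|_{L^2}$ being strictly below the ground-state threshold are propagated by the flow, yielding a uniform $H^1$ bound in subcases 2.3--2.5 and 3.2--3.4.

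Finally, to promote the $H^1$ bound into the spacetime estimate \eqref{1.6}, I would regard \eqref{1} as a perturbation of whichever single-nonlinearity equation \eqref{2}--\eqref{5} carries the scaling-critical (or scattering-responsible) term, and invoke the corresponding global Strichartz bound together with the stability/perturbation lemma; an interaction Morawetz estimate on the dominant defocusing term supplies the decay needed to absorb the secondary nonlinearity in $S^1$ on compact intervals. The \emph{main obstacle} is the energy-critical subcases, in particular 2.4 and 3.3: there the threshold conditions and the spacetime bound rest on the Kenig-Merle concentration-compactness/rigidity machinery adapted to the combined equation, and one must verify that the auxiliary nonlinearity respects the profile decomposition without destroying the below-ground-state constraint. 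This is also the reason for the radial hypothesis in 3.3 when $n=3,4$, which allows the radial rigidity argument for the focusing energy-critical NLS to be applied to the minimal blow-up profile.
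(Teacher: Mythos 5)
Your local-theory and a priori $H^1$-bound steps match the paper's strategy (Proposition 3.1/3.2 plus Subsection 4.2, including the continuity/bootstrap propagation of the below-ground-state thresholds via the sharp Gagliardo--Nirenberg constants), and your first sentence about promoting the $H^1$ bound by treating \eqref{1} as a perturbation of the appropriate pure critical equation is exactly the route the paper takes in Subsection 4.3. However, your final paragraph introduces a genuine misdirection: the paper \emph{never} adapts Kenig--Merle concentration-compactness or profile decomposition to the combined equation, not even in the focusing critical cases 2.4 and 3.3. It cites the existing global results for the \emph{pure} focusing energy-critical equations (\cite{2,4,17}) as black boxes, and then invokes the stability lemmas of Section 3 (Lemma 3.4 for power-critical, Lemma 3.2 for Hartree-critical), feeding the other (subcritical) nonlinearity in as the error term $e$. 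The crucial point that makes this work on compact intervals $I$ is that the subcritical term acquires a factor $|I|^{\alpha}$ with $\alpha>0$ after H\"older in time, so the error can be made $\dot{N}^1$-small by choosing the subintervals short enough; one never needs to ``verify that the auxiliary nonlinearity respects the profile decomposition.'' Attempting to re-run concentration-compactness for the combined system would be considerably harder and is not what the paper does.

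A second, related slip: you invoke an interaction Morawetz estimate ``on the dominant defocusing term'' to absorb the secondary nonlinearity in $S^1$ on compact intervals. In the paper the interaction Morawetz estimate (Proposition 5.1) is used only for the \emph{scattering} results of Section 5, requires $\lambda_1,\lambda_2>0$ and $p>\frac4n$, $\gamma>2$, and therefore cannot be used in the mixed-sign cases 2 and 3 of Theorem~\ref{1.1}. For the compact-interval spacetime bound \eqref{1.6} the decay mechanism is not Morawetz at all: it is precisely the $|I|^{\alpha}$ gain from the subcritical nonlinearity noted above, combined with the iteration over $J=J(E,\eta)$ subintervals on which the critical-equation solution $v$ has small $\dot B^1$-norm. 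You should replace your last paragraph by a short argument along these lines, and drop the appeal to Morawetz and to a combined-equation concentration-compactness.
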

\begin{remark}
For the case 2.4, we need the initial datum to be radial. Because
according to \cite{4} when the initial datum is radial, there maybe
exists the global solution for \eqref{2}. For the case 3.3, R.
Killip and M. Visan have proven the global well-posedness for
\eqref{4} in \cite{17} when the initial datum isn't radial, but
their approach is not suitable for the lower dimension, thus for
lower dimension we preserve the radial condition.
\end{remark}
\ \quad We'll prove this theorem in Section 4. Our chief work is to
get a bound of $\parallel u\parallel_{H_x^1}$ which only depends on
energy and mass, and then apply the perturbation principles to get
the result. As mentioned above, we hope the defocusing term can
control the focusing term, however, this can't be true usually, but
we can prove that under the assumption of 2.1 and 3.1 in Theorem
1.1, it do happen. For other cases, our approach can't show the
defocusing term is able to control the focusing term. So just as
what T. Cazenave did, still need some circumstances of the smallness
about energy and mass. But the different point from that is the
smallness which is characterized by the ground state. Unfortunately,
our method isn't useful for the case that both of the power and
Hartree nonlinearities are energy-critical. Because after using
Strichartz estimate, we need the dependence in time for the
coefficients of nonlinearities, but no such factor for such both
cases are energy-critical. The detail is in Section 4.

 In Section 5, we consider the asymptotic behavior of these global solutions.
 It is natural to apply a unconditional scattering theory for
\eqref{3} and \eqref{5}. However, at least till now, we have to
demand the initial datum radial and the size of mass is smaller than
the one of ground state \cite{15,18}. Therefore, we need the
following assumptions:
\begin{assumption}
Let $v_0\in H_x^1,\ \lambda_1>0$. Then there exists a unique global
solution $v$ to \eqref{5} and satisfies
\begin{equation}
\parallel v\parallel_{L^\frac{2(n+2)}{n}_{t,x}(\mathbb{R}\times
\mathbb{R}^n)}\leq C(\parallel v_0\parallel_{L_x^2})
\end{equation}
\end{assumption}
\begin{assumption}
Let $w_0\in H_x^1,\ \lambda_2>0$. Then there exists a unique global
solution $w$ to \eqref{3} and moreover
\begin{equation}
\parallel w\parallel_{L^6_tL^\frac{6n}{3n-2}_x(\mathbb{R}\times
\mathbb{R}^n)}\leq C(\parallel w_0\parallel_{L_x^2})
\end{equation}
\end{assumption}
\ \quad Our second main theorem is:
\begin{theorem}\label{thm1.2}(Energy space scattering)\\
Let $u_0\in H_x^1$, the conditions in Theorem 1.1  are assumed, and
$u$ be the unique solution to \eqref{1}. In addition, if
$p=\frac{4}{n}$, then we need Assumption 1.1. If $\gamma=2$, then we
also need Assumption 1.2. Then in the following case, there exist
$u_{+}, u_-\in H_x^1$ such that
\begin{equation}
\parallel
u-e^{it\Delta}u_{\pm}\parallel_{H_x^1}\rightarrow0\quad \mbox{as}\
t\rightarrow\pm\infty.
\end{equation}
case 1: $\lambda_1,\ \lambda_2>0,\ \frac{4}{n}\leq
p\leq\frac{4}{n-2},\ 2\leq\gamma\leq4$ with $\gamma<n$ except the
point
$(p,\gamma)=(\frac{4}{n-2},4)$, especially, when $(p,\gamma)=(\frac{4}{n},2)$, we still need the small mass condition;\\
case 2: $\lambda_1\cdot\lambda_2<0,\ \frac{4}{n}\leq
p\leq\frac{4}{n-2},\ 2\leq\gamma\leq4$ with $\gamma<n$ and the small
mass condition except the point $(p,\gamma)=(\frac{4}{n-2},4)$.\\
Furthermore,
\[
\parallel u_+\parallel_{L^2}=\parallel u_-\parallel_{L^2}=\parallel
u_0\parallel_{L^2}\quad\mbox{and}\quad\frac{1}{2}\int_{\mathbb{R}^n}|\triangledown
u_+|^2=\frac{1}{2}\int_{\mathbb{R}^n}|\triangledown u_-|^2=E(u_0)
\]
\end{theorem}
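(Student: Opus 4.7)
\textbf{Proof plan for Theorem \ref{thm1.2}.}

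The overall strategy is the classical three-step recipe for scattering: (i) upgrade the local-in-time spacetime bound \eqref{1.6} of Theorem \ref{1.1} to a \emph{global} $S^1(\mathbb{R}\times\mathbb{R}^n)$ bound; (ii) extract scattering states $u_\pm$ from the Duhamel representation; (iii) transfer the conserved quantities to $u_\pm$. Once step (i) is available, steps (ii) and (iii) are standard Strichartz arguments; the real work is in (i), where the range $\tfrac{4}{n}\le p\le\tfrac{4}{n-2}$, $2\le\gamma\le4$ is exactly the regime in which Morawetz estimates give useful global decay and in which the stability theory of Assumptions 1.1--1.2 and the Kenig--Merle / Killip--Visan results for the endpoint cases can be plugged in.

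For step (i), I would first establish an interaction Morawetz (or, in the radial energy-critical case, the classical Lin--Strauss Morawetz) estimate tailored to the combined nonlinearity. Under the hypotheses of case 1, both nonlinearities contribute non-negative Morawetz terms; in case 2 the small mass assumption (or, in the intercritical subcases, the ground-state gap inequalities used to prove Theorem \ref{1.1}) is what lets the defocusing Morawetz term absorb the focusing one, yielding a global bound of the form $\|u\|_{L^{q}_tL^{r}_x(\mathbb{R}\times\mathbb{R}^n)}\le C(E,M)$ for a suitable admissible exponent. Next I would partition $\mathbb{R}$ into finitely many sub-intervals $I_j$ on which this Morawetz norm is small. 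On each $I_j$ a standard Strichartz bootstrap converts the smallness into a bound $\|u\|_{S^1(I_j)}\le C(E,M)$, after which summing over $j$ gives $\|u\|_{S^1(\mathbb{R}\times\mathbb{R}^n)}<\infty$. At the critical endpoints $p=\tfrac{4}{n}$, $\gamma=2$, $p=\tfrac{4}{n-2}$, $\gamma=4$, the Strichartz bootstrap alone loses the necessary time-factor; here I would apply the perturbation lemma (as in Section 4) to regard $u$ as a perturbation of the solution of the pure equation \eqref{5}, \eqref{3}, \eqref{4} or \eqref{2}, respectively, whose global $S^1$-bound is granted by Assumptions 1.1--1.2 and by the known energy-critical results.

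For step (ii), having $\|u\|_{S^1(\mathbb{R})}<\infty$ I would apply Strichartz to the Duhamel tail
\begin{equation*}
e^{-it_2\Delta}u(t_2)-e^{-it_1\Delta}u(t_1)=-i\int_{t_1}^{t_2} e^{-is\Delta}\bigl[\lambda_1|u|^pu+\lambda_2(|x|^{-\gamma}\ast|u|^2)u\bigr]\,ds,
\end{equation*}
estimating the right-hand side in the dual Strichartz norm by $\|u\|_{S^1([t_1,t_2])}$-type quantities, which tend to zero as $t_1,t_2\to\infty$ by absolute continuity of the finite global norm. Thus $\{e^{-it\Delta}u(t)\}$ is Cauchy in $H_x^1$, and I define $u_\pm:=\lim_{t\to\pm\infty}e^{-it\Delta}u(t)$ in $H_x^1$. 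The identity $\|u_\pm\|_{L^2}=\|u_0\|_{L^2}$ follows from mass conservation and the isometry of $e^{it\Delta}$ on $L^2$. For the kinetic identity, global $S^1$ integrability forces $\|u(t)\|_{L^{p+2}}\to0$ and $\int(|x|^{-\gamma}\ast|u(t)|^2)|u(t)|^2\,dx\to0$ along a sequence (one can make this quantitative via Strichartz and Hardy--Littlewood--Sobolev), so energy conservation collapses to $\tfrac12\|\nabla u(t)\|_{L^2}^2\to E(u_0)$; combined with the $H^1$-convergence $\nabla u(t)-\nabla e^{it\Delta}u_\pm\to0$ this yields $\tfrac12\|\nabla u_\pm\|_{L^2}^2=E(u_0)$.

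The main obstacle is step (i) at the critical endpoints, in particular at $(p,\gamma)=(\tfrac{4}{n},2)$ where both nonlinearities are mass-critical. Here the Morawetz output does not by itself close a Strichartz bootstrap, and one must feed the global scattering norms from Assumptions 1.1 and 1.2 into the perturbation lemma simultaneously, so the small-mass hypothesis is essential to ensure that the perturbation parameter is controlled on each piece of the chosen time partition. The remaining cases (pure intercritical $p$ and $\gamma$) require only the Morawetz + Strichartz scheme and should go through without further structural assumptions beyond those inherited from Theorem \ref{1.1}.
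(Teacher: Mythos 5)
Your skeleton (interaction Morawetz to partition time, Strichartz bootstrap on each subinterval, perturbation off the pure critical equations at the endpoints, Duhamel tail to extract $u_\pm$) matches the paper's Subsections 5.1--5.4 and 5.8 for the defocusing intercritical and singly-critical cases, and your steps (ii)--(iii) are the standard argument of Subsection 5.8. The genuine gap is at the doubly-critical pairs $(p,\gamma)=(\frac{4}{n-2},2)$ and $(\frac{4}{n},4)$, which case 1 of the theorem covers \emph{without} any small-mass hypothesis. Your device ``perturb off the pure equation'' cannot close there: whichever critical equation you take as reference, the remaining nonlinearity must be exhibited as a small error in $\dot{N}^1$ on each piece of the partition, but being scale-invariant it carries no positive power of $|I|$, the Morawetz-smallness estimates of Lemma 2.6 require the strict range $\frac{4}{n}<p<\frac{4}{n-2}$, $2<\gamma<4$ (and Lemma 2.7 only helps for the energy-critical piece), and no smallness of $M$ is assumed. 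The paper's Subsection 5.5 supplies the missing idea: split $u=u_{lo}+u_{hi}$ in frequency at $\eta_2^{-1}$, compare $u_{lo}$ with the mass-critical Hartree flow (Assumption 1.2 plus the $L^2$-critical stability Lemma 3.5) and $u_{hi}$ with the energy-critical power NLS (Lemma 3.7), and run an induction over the Morawetz intervals with the hierarchy $\eta_3\ll\eta_2\ll\eta_1\ll1$, the Bernstein factors $\eta_2^{\pm1}$ providing the smallness of the cross terms. Relatedly, at $(p,\gamma)=(\frac{4}{n},2)$ your proposal to ``feed Assumptions 1.1 and 1.2 into the perturbation lemma simultaneously'' is not a well-defined argument, since a stability lemma compares to a single reference flow; the paper instead treats the entire nonlinearity as an error relative to the \emph{free} Schr\"odinger evolution and draws the required smallness from the small-mass hypothesis (Subsection 5.7).

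A second discrepancy concerns case 2 ($\lambda_1\lambda_2<0$): you propose that the small mass, or the ground-state gap inequalities, lets the defocusing Morawetz term absorb the focusing one. The positivity argument behind Proposition 5.1 uses $\lambda_1,\lambda_2>0$ in an essential way, and no absorption of the focusing bilinear term is carried out (or obviously available) in the paper; instead, the mixed-sign case avoids Morawetz altogether and obtains the global $S^1$ bound purely by stability theory with small mass --- perturbing off the energy-critical equation when $p=\frac{4}{n-2}$ or $\gamma=4$ (Subsection 5.6) and off the free equation in the remaining cases (Subsection 5.7). If you wish to keep your Morawetz-absorption route for case 2, you would have to prove a new signed Morawetz inequality, which is precisely the step your outline leaves unjustified.
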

\ \quad We'll prove the theorem in Section 5. The main tools are a
refined Morawetz estimate and the perturbation principles.
Unfortunately, to use such a refined Morawetz estimate, we have to
require that $\lambda_1>0,\ \lambda_2>0,\ p>\frac{4}{n}\ ,\gamma>2$.
So when $\lambda_1\cdot\lambda_2<0$ need a kind of smallness, here
we demand of mass sufficiently small. The refined Morawetz estimate
was firstly used by T. Tao to prove the dispersive property of the
cubic Schr\"odinger equation \cite{12}, but the space dimension must
be no less than 3. Then, J. Colliander, M. Grillakis and N. Tzirakis
get a refined Morawetz estimate for 1-D and 2-D, and obtain the
scattering of 2-D power type Schr\"odinger equation. However, for
this case $\gamma<n=2$, in order to apply Morawetz estimate, we need
$\gamma>2$. Thus we can't have scattering for Hartree, as well for
\eqref{1}. For $p=\frac{4}{n}$ and $\gamma=2$, i.e. both
nonlinearities are mass-critical, the low frequency of the solution
can own an effective control, but  there is no such a  good luck for
the high one. Thus at this time, we view \eqref{1} as the
perturbation of free Schr\"odinger equation.\par
 At the last section, we describe the blow up phenomena when the initial datum belongs to $\Sigma$ space. We believe the method also suitable for
the initial datum belongs to energy space with radial condition. The
detail can be consulted in Chapter 6 of \cite{19}.\par
 Our last main theorem is:
\begin{theorem}(blowup)\\
Let $u_0\in \Sigma$. Then blowup occurs in each of the following
cases: \\
(1) for $\lambda_1>0,\ \lambda_2<0$: when $2\leq\gamma\leq4,\
0<p\leq\frac{4}{n-2}\ ,\gamma\geq\frac{np}{2},\ \mbox{and}\ E<0$;\\
(2) for $\lambda_1<0,\ \lambda_2>0$: when $\frac{4}{n}\leq p\leq\frac{4}{n-2},\ 0<\gamma\leq\frac{np}{2},\ \mbox{and}\ E<0$;\\
(3) for $\lambda_1<0,\ \lambda_2<0$
    \begin{itemize}
      \item when $\frac{4}{n}<p\leq\frac{4}{n-2},\ 0<\gamma<2,\
      \mbox{and}\ 4npE+C(M)<0$.
      \item when $0<p<\frac{4}{n},\ 2<\gamma\leq4,\
      \mbox{and}\ 8\gamma E+C(M)<0$.
      \item when $\frac{4}{n}\leq p\leq\frac{4}{n-2},\ 2\leq \gamma\leq4,\
      \mbox{and}\ E<0$.
    \end{itemize}
\end{theorem}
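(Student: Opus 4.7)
The plan is to use Glassey's classical virial argument. For $u_0 \in \Sigma$, the variance $V(t):=\int |x|^2 |u(t,x)|^2\,dx$ is well-defined and twice differentiable (after the standard smoothing argument in the spirit of Chapter~6 of \cite{19}), and a direct computation yields the virial identity
\[
V''(t) = 8\int|\nabla u|^2\,dx + \frac{4n\lambda_1 p}{p+2}\int|u|^{p+2}\,dx + 2\lambda_2\gamma\int(|x|^{-\gamma}\ast|u|^2)|u|^2\,dx.
\]
Using energy conservation to trade the kinetic part against the potential parts, I would rewrite, for any free parameter $\alpha>0$,
\[
V''(t)=4\alpha E+(8-2\alpha)\int|\nabla u|^2+\frac{4\lambda_1(np-\alpha)}{p+2}\int|u|^{p+2}+\lambda_2(2\gamma-\alpha)\int(|x|^{-\gamma}\ast|u|^2)|u|^2.
\]
The strategy is then to choose $\alpha$ case by case so that every remaining integral has a non-positive coefficient (or can be absorbed by a favorable one), yielding either $V''(t)\le 4\alpha E$ or $V''(t)\le 4\alpha E + C(M)$.

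In case (1) I would take $\alpha=2\gamma$: the Hartree coefficient vanishes, $(8-2\alpha)\le 0$ thanks to $\gamma\ge 2$, and the power coefficient is non-positive because $\lambda_1>0$ with $np\le 2\gamma$. Case (2) is the mirror image with $\alpha=np$: the power coefficient vanishes, $(8-2np)\le 0$ since $p\ge 4/n$, and the Hartree coefficient is non-positive because $\lambda_2>0$ with $2\gamma\le np$. In the third bullet of case (3) the choice $\alpha=4$ kills the gradient term; both power and Hartree coefficients are non-positive using $p\ge 4/n$, $\gamma\ge 2$, $\lambda_1<0$, $\lambda_2<0$. In each of these three settings we obtain directly $V''(t)\le 4\alpha E<0$.

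The first two bullets of case (3) are harder because exactly one of the three coefficients necessarily has the wrong sign, which is why the hypothesis has the weaker shape $4\alpha E+C(M)<0$. For the first bullet I would pick $\alpha=np$ (so the gradient coefficient is favorable), leaving the Hartree term as the bad one, and control it by Hardy--Littlewood--Sobolev followed by Gagliardo--Nirenberg,
\[
\int(|x|^{-\gamma}\ast|u|^2)|u|^2\le C\|u\|_{L^{4n/(2n-\gamma)}}^{4}\le CM^{2-\gamma/2}\|\nabla u\|_{L^2}^{\gamma};
\]
since $\gamma<2$, Young's inequality converts $\|\nabla u\|_{L^2}^{\gamma}$ into $\varepsilon\|\nabla u\|_{L^2}^{2}+C_\varepsilon(M)$, which is absorbed by the favorable gradient term and produces $V''(t)\le 4np\,E+C(M)$. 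For the second bullet I would pick $\alpha=2\gamma$, leaving the power term as the bad one, and control it by the standard Gagliardo--Nirenberg $\int|u|^{p+2}\le CM^{(p+2)/2-np/4}\|\nabla u\|_{L^2}^{np/2}$; since $np/2<2$, Young's inequality absorbs it in the same way and gives $V''(t)\le 8\gamma E+C(M)$.

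In every case we obtain $V''(t)\le -c<0$ uniformly in $t$, so integrating twice forces $V(t)\le V(0)+V'(0)t-\tfrac{c}{2}t^2$, which becomes negative for $|t|$ sufficiently large and contradicts $V(t)\ge 0$; hence the solution cannot exist globally. The main obstacle I anticipate is the careful bookkeeping of the Gagliardo--Nirenberg exponents so that the absorbed remainder exactly reproduces the constant $C(M)$ appearing in the statement, and the verification of the sign conditions on each sub-case without losing strict inequality; the rigorous derivation of the virial identity at $H^1\cap\Sigma$ regularity is routine via mollification, as in \cite{19}.
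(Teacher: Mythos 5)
Your proposal is correct and follows essentially the same route as the paper: Glassey's virial identity, substitution of the conserved energy to make all but one term have a favorable sign, and, in the two mixed cases of $\lambda_1,\lambda_2<0$, the Gagliardo--Nirenberg (ground-state type) bounds combined with Young's inequality to absorb the bad term into the gradient term, producing the constant $C(M)$. The only cosmetic difference is your uniform choice $\alpha=4$ in the third bullet of case (3), where the paper instead splits into $\gamma\ge \frac{np}{2}$ and $\gamma<\frac{np}{2}$ and uses $\alpha=np$ or $\alpha=2\gamma$; both yield a negative constant upper bound for $f''$ and the same conclusion.
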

\begin{remark}
The conclusions in Theorem 1.1 and Theorem 1.3 aren't contrary,
since the energy in Theorem 1.1 are nonnegative. One can find that
for the case $\lambda_1<0,\ \lambda_2>0$, we drop a situation:
$\frac{np}{2}<\gamma\leq2+n-\frac{4}{p}$, it was caused by that we
could not judge the relationship between
$\int\left(|x|^{-\gamma}\ast|u|^2\right)|u|^2\,dx$ and $\parallel
u\parallel_{L_x^{p+2}}^{p+2}$.  Since the inequality
\[\parallel
u\parallel_{L_x^q}^q\lesssim\int\left(|x|^{-\gamma}\ast|u|^2\right)|u|^2\,dx,\,
\parallel
u\parallel_{L_x^{p+2}}^{p+2}\lesssim\parallel u\parallel_{L_x^r}^r
\]
holds true where $q=\frac{2(4+n-\gamma)}{2+n-\gamma},\
r=\frac{2n+2\gamma}{n}$. If we can get
$\int\left(|x|^{-\gamma}\ast|u|^2\right)|u|^2\,dx\thicksim\parallel
u\parallel_{L_x^s}^s$, for the situation $s>p+2$, one can apply the
method in Subsection 4.2 for case (2), to get the global
well-posedness and scattering; for the other $s\leq p+2$, one can
apply the method in Section 6, to say under some condition, it would
blow up in finite time.
\end{remark}
\section{Notation}
\ \quad In this section, we will introduce a few notations and
fundamental inequalities which always appear in the following
sections.
 \begin{definition}: We say a pair $(q,r)$ is
Schr\"{o}dinger-admissible if $\frac{2}{q}+\frac{n}{r}=\frac{n}{2}$
and $2\leq q,r\leq\infty$. If $I\times \mathbb{R}^n$ is a spacetime
slab, we define:
\[
||u||_{\dot{S}^0(I\times\mathbb{R}^n)}:=\sup||u||_{L^q_tL^r_x(I\times
\mathbb{R}^n)},
\]
where the {\rm sup} is taken over all admissible pairs $(q,r)$,
\[
||u||_{\dot{S}^1(I\times \mathbb{R}^n)}:=||\nabla
u||_{\dot{S}^0(I\times \mathbb{R}^n)}.
\]
Denote $\dot{N}^0(I\times \mathbb{R}^n)$ the dual space of
$\dot{S}^0(I\times \mathbb{R}^n)$, and
\[
\dot{N}^1(I\times\mathbb{R}^n):=\{u:\nabla u\in\dot{N}^0(I\times
\mathbb{R}^n)\}.
\]
We also define the following norms:
\begin{eqnarray*}
||u||_{U(I)}&:=&||u||_{L^6_tL^\frac{6n}{3n-2}_x(I\times
\mathbb{R}^n)}\\
||u||_{V(I)}&:=&||u||_{L^\frac{2(n+2)}{n}_{t,x}(I\times
\mathbb{R}^n)}\\
||u||_{W(I)}&:=&||u||_{L^\frac{2(n+2)}{n-2}_{t,x}(I\times
\mathbb{R}^n)}\\
||u||_{Z(I)}&:=&||u||_{L^{n+1}_tL^\frac{2(n+1)}{n-1}_x(I\times
\mathbb{R}^n)}
\end{eqnarray*}
\end{definition}
By definition and Sobolev embedding, we obtain
 \begin{lemma} For
any $\dot{S}^1$ function u on $I\times \mathbb{R}^n$, we have
\begin{eqnarray}
||\nabla u||_{L_t^{\infty}L_x^{2}}+||\nabla
u||_{L_t^{\frac{2(n+2)}{n-2}}L_x^{\frac{2n(n+2)}{n^2+4}}}+||\nabla
u||_{V}+||\nabla u||_{L_t^{2}L_x^{\frac{2n}{n-2}}}+||\nabla
u||_{U}\nonumber\\
+||u||_{L_t^{\infty}L_x^{\frac{2n}{n-2}}}
+||u||_{W}+||u||_{L_t^{\frac{2(n+2)}{n}}L_x^{\frac{2n(n+2)}{n^2-2n-4}}}\lesssim
||u||_{\dot{S}^1},
\end{eqnarray}
where all spacetime norms are on $I\times \mathbb{R}^n$.
\end{lemma}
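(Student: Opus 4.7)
The plan is to reduce every term on the left-hand side either directly to the defining supremum of $\|u\|_{\dot S^1}=\|\nabla u\|_{\dot S^0}$, by recognizing the pair of exponents as Schr\"odinger-admissible, or to such a term after one application of the Sobolev embedding in the spatial variable. No dispersive estimate is needed; the lemma is a purely algebraic/embedding consequence of the definition of $\dot S^1$.

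First I would handle the five gradient terms by dimensional bookkeeping: for each of the pairs
\[
(\infty,2),\quad \Bigl(\tfrac{2(n+2)}{n-2},\tfrac{2n(n+2)}{n^{2}+4}\Bigr),\quad \Bigl(\tfrac{2(n+2)}{n},\tfrac{2(n+2)}{n}\Bigr),\quad \Bigl(2,\tfrac{2n}{n-2}\Bigr),\quad \Bigl(6,\tfrac{6n}{3n-2}\Bigr),
\]
I would substitute into $\tfrac{2}{q}+\tfrac{n}{r}$ and check that it equals $\tfrac{n}{2}$, and that each exponent lies in $[2,\infty]$ for $n\ge 3$. Once admissibility is verified, each of those five norms is bounded by the sup in Definition~2.1 and hence by $\|u\|_{\dot S^1}$.

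Next I would treat the three $u$-norms via the Sobolev embedding $\dot W^{1,r}(\mathbb R^n)\hookrightarrow L^{r_1}(\mathbb R^n)$ with $\tfrac{1}{r_1}=\tfrac{1}{r}-\tfrac{1}{n}$, applied pointwise in $t$ and followed by taking the corresponding $L^q_t$ norm. Explicitly: $\|u\|_{L^{\infty}_t L^{2n/(n-2)}_x}\lesssim\|\nabla u\|_{L^{\infty}_t L^{2}_x}$ from $\dot H^1\hookrightarrow L^{2n/(n-2)}$; $\|u\|_W=\|u\|_{L^{2(n+2)/(n-2)}_{t,x}}\lesssim\|\nabla u\|_{L^{2(n+2)/(n-2)}_t L^{2n(n+2)/(n^2+4)}_x}$, the space exponent being chosen so that $\tfrac{1}{r_1}=\tfrac{1}{r}-\tfrac{1}{n}$ reproduces the admissible pair of the second gradient term; and $\|u\|_{L^{2(n+2)/n}_t L^{2n(n+2)/(n^2-2n-4)}_x}\lesssim\|\nabla u\|_{V}$, where the space exponent is likewise tuned to the $V$-pair. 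Each of the three resulting right-hand sides is already one of the admissible gradient norms handled in the previous step, so it is $\lesssim\|u\|_{\dot S^1}$.

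The ``hard'' part is purely arithmetic: verifying that the many awkward-looking conjugate exponents agree on the nose. To minimize slippage I would lay the checks out in a short table matching each space pair with the condition $\tfrac{2}{q}+\tfrac{n}{r}=\tfrac{n}{2}$ and, for the three Sobolev-dependent lines, the condition $\tfrac{1}{r_1}=\tfrac{1}{r}-\tfrac{1}{n}$. The only mildly delicate point is making sure that the spatial exponents $r$ arising from Sobolev satisfy $r<n$ so that the embedding is in subcritical form; this holds in the relevant range of $n\ge 3$ for the lines where the formula is used, and otherwise the statement should be read with the standard $\dot H^1$ embedding. Summing the eight verified inequalities yields the claimed bound.
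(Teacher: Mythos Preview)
Your approach is correct and is exactly what the paper does: the paper states only ``By definition and Sobolev embedding, we obtain'' and gives no further argument, so your explicit verification that the five gradient pairs are Schr\"odinger-admissible and that the three $u$-norms reduce via spatial Sobolev to those admissible gradient norms is precisely the intended proof, spelled out in full. One minor remark: the last norm $L_t^{2(n+2)/n}L_x^{2n(n+2)/(n^2-2n-4)}$ has a negative spatial exponent when $n=3$, so that term should be read as only appearing for $n\ge 4$; this is a cosmetic issue with the statement rather than with your argument.
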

\begin{lemma}(\textbf{Strichartz estimates})Let $I$ be a compact time interval,
$k=0,1$, and  $u:I\times \mathbb{R}^n\rightarrow \mathbb{C}$ be an
$\dot{S}^k$ solution to the forced Schr\"{o}dinger equation
\[
iu_t+\Delta u=F
\]
for a given function $F$. Then we have
 \begin{eqnarray}
||u||_{\dot{S}^k(I\times
\mathbb{R}^n)}\lesssim
||u(t_0)||_{\dot{H}^k(\mathbb{R}^n)}+||F||_{\dot{N}^k(I\times
\mathbb{R}^n)}
\end{eqnarray}
for any time $t_0\in I$.
\end{lemma}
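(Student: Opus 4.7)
The plan is to derive this lemma from the classical (Keel--Tao) Strichartz estimates for the free Schr\"odinger propagator $e^{it\Delta}$. In the notation of Definition 2.1, both $\dot{S}^k$ and $\dot{N}^k$ are defined by taking suprema / infima over all Schr\"odinger-admissible pairs, so the lemma is essentially a repackaging of the standard homogeneous and inhomogeneous Strichartz inequalities. The main work is (a) invoking those inequalities for a fixed admissible pair and (b) passing from scalar-valued estimates to the $\dot{S}^k$ / $\dot{N}^k$ notation used throughout the paper.

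First I would write down the Duhamel formula corresponding to $iu_t+\Delta u=F$,
\[
u(t)=e^{i(t-t_0)\Delta}u(t_0)-i\int_{t_0}^{t}e^{i(t-s)\Delta}F(s)\,ds,
\]
and estimate the two pieces separately. For the homogeneous term, the classical Strichartz inequality
\[
\|e^{it\Delta}\phi\|_{L^q_tL^r_x(\mathbb{R}\times\mathbb{R}^n)}\lesssim\|\phi\|_{L^2_x},
\]
valid for every admissible pair $(q,r)$ and, since $n\ge 3$, including the endpoint $(2,\tfrac{2n}{n-2})$, bounds every admissible space-time norm on $I\times\mathbb{R}^n$ by $\|u(t_0)\|_{L^2_x}$; taking the sup over admissible pairs yields the desired $\dot S^0$ bound.

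For the inhomogeneous term, the standard $TT^{*}$ argument, combined with the Christ--Kiselev lemma to handle the truncation $s<t$, produces
\[
\left\|\int_{t_0}^{t}e^{i(t-s)\Delta}F(s)\,ds\right\|_{L^q_tL^r_x(I\times\mathbb{R}^n)}\lesssim\|F\|_{L^{\tilde q'}_tL^{\tilde r'}_x(I\times\mathbb{R}^n)}
\]
for any two admissible pairs $(q,r)$ and $(\tilde q,\tilde r)$. Unwinding the definition of $\dot N^0$ as the predual of $\dot S^0$ then lets me take the sup over $(q,r)$ on the left and the infimum over admissible representations of $F$ on the right, giving the $\dot S^0$--$\dot N^0$ inequality. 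Adding the two bounds disposes of the $k=0$ case.

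The $k=1$ case follows by commuting $\nabla$ through the equation: $\nabla u$ satisfies $i(\nabla u)_t+\Delta(\nabla u)=\nabla F$ with data $\nabla u(t_0)$, so applying the $k=0$ estimate component-wise to $\nabla u$ produces exactly $\|u\|_{\dot S^1}\lesssim\|u(t_0)\|_{\dot H^1}+\|F\|_{\dot N^1}$. There is no substantive obstacle: the statement is a direct consequence of Keel--Tao, which in dimension $n\ge 3$ covers the full admissible range including the endpoint. The only delicate point is that Christ--Kiselev fails at the double endpoint $(q,r)=(\tilde q,\tilde r)=(2,\tfrac{2n}{n-2})$, but this case is handled directly by the retarded bilinear estimate of Keel--Tao, and no separate dimension-specific argument is required.
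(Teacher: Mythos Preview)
Your proposal is correct and matches the paper's treatment: the paper does not give its own proof of this lemma but simply refers the reader to the standard references (Keel--Tao and Cazenave's monograph), and what you have sketched is precisely the argument found there---Duhamel formula, the Keel--Tao homogeneous and retarded inhomogeneous estimates (including the endpoint in $n\ge 3$), and commuting $\nabla$ through the equation for $k=1$.
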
\par
 For the details of proof, we refer to \cite{16,19}. In addition, we need some Littlewood-Paley theory. Let $\varphi(\xi)$
 be a smooth bump function which is supported in the ball $|\xi|\leq 2$
 and equal to 1 in the ball $|\xi|\leq 1$. For each dyadic
 number $N\in 2^\mathbb{Z}$, we can define the Littlewood-Paley
 operators:
 \begin{eqnarray*}
 \widehat{P_{\leq
 N}f}(\xi):&=&\varphi(\frac{\xi}{N})\hat{f}(\xi),\\
 \widehat{P_{>
 N}f}(\xi):&=&[1-\varphi(\frac{\xi}{N})]\hat{f}(\xi),\\
 \widehat{P_{
 N}f}(\xi):&=&[\varphi(\frac{\xi}{N})-\varphi(\frac{2\xi}{N})]\hat{f}(\xi).
 \end{eqnarray*}
Then by these notations, we recall a few standard Bernstein type
inequalities:
 \begin{lemma} For any $1\leq p\leq q\leq\infty, s>0$, we have
 \begin{eqnarray*}
 ||P_{\geq N}f||_{L^p_x}&\lesssim&N^{-s}|||\nabla|^sP_{\geq
 N}f||_{L^p_x},\\
 |||\nabla|^sP_{\leq N}f||_{L^p_x}&\lesssim&N^{s}||P_{\leq
 N}f||_{L^p_x},\\
 |||\nabla|^{\pm s}P_{N}f||_{L^p_x}&\thicksim&N^{\pm s}||P_{
 N}f||_{L^p_x},\\
 ||P_{\leq N}f||_{L^q_x}&\lesssim&N^{\frac{n}{p}-\frac{n}{q}}||P_{\leq
 N}f||_{L^p_x},\\
 ||P_{N}f||_{L^q_x}&\lesssim &N^{\frac{n}{p}-\frac{n}{q}}||P_{
 N}f||_{L^p_x}.
 \end{eqnarray*}
 \end{lemma}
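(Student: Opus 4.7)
The plan is to reduce every inequality to Young's convolution estimate applied to a Fourier multiplier whose kernel is explicit and whose $L^r$-norm scales cleanly in $N$. Write $\psi(\xi):=\varphi(\xi)-\varphi(2\xi)$, so that $\widehat{P_N f}(\xi)=\psi(\xi/N)\hat f(\xi)$. A change of variables on the Fourier side gives $P_N f = K_N*f$ with $K_N(x)=N^n\check\psi(Nx)$; since $\psi$ is smooth and compactly supported, $\check\psi$ is Schwartz, and scaling produces $\|K_N\|_{L^r_x}=N^{n(1-1/r)}\|\check\psi\|_{L^r}$ for every $1\leq r\leq\infty$. The analogous setup with $\varphi$ in place of $\psi$ handles $P_{\leq N}$.

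For the two $L^p\to L^q$ Bernstein bounds, I would choose $r$ by $1/r=1-1/p+1/q$ so that Young's inequality gives $\|P_N f\|_{L^q_x}\lesssim N^{n/p-n/q}\|P_N f\|_{L^p_x}$, using the identity $P_N = P_N\tilde P_N$ for a slightly fattened projector $\tilde P_N$ with a kernel satisfying the identical scaling, to absorb the extra factor onto the input; the argument for $P_{\leq N}$ is identical. For the two-sided identity on $P_N$, I would pick a smooth cutoff $\chi$ supported away from the origin and equal to $1$ on $\mathrm{supp}\,\psi$, and set $m_\pm(\xi):=|\xi|^{\pm s}\chi(\xi)$. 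Both $m_\pm$ are smooth with compact support, hence their inverse Fourier transforms are Schwartz functions whose $L^1$-norms are independent of $N$. Because $\chi\equiv 1$ on $\mathrm{supp}\,\psi$, we have $|\xi|^{\pm s}\psi(\xi/N)=N^{\pm s}m_\pm(\xi/N)\psi(\xi/N)$, and Young's inequality with $r=1$ delivers $\||\nabla|^{\pm s}P_N f\|_{L^p_x}\sim N^{\pm s}\|P_N f\|_{L^p_x}$. Choosing $\chi$ equal to $1$ on $\mathrm{supp}\,\varphi$ instead yields the companion bound $\||\nabla|^s P_{\leq N}f\|_{L^p_x}\lesssim N^s\|P_{\leq N}f\|_{L^p_x}$.

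For the remaining estimate on $P_{\geq N}$, I would dyadically decompose $P_{\geq N}f=\sum_{M\geq N}P_M f$ and apply the previous two-sided identity to each piece to get $\|P_M f\|_{L^p_x}\lesssim M^{-s}\||\nabla|^s P_M f\|_{L^p_x}$; assembling via a Littlewood--Paley square-function argument for $1<p<\infty$ (which is the regime the paper actually uses) reconstructs the single operator $|\nabla|^s P_{\geq N}$ on the right and gives $\|P_{\geq N}f\|_{L^p_x}\lesssim N^{-s}\||\nabla|^s P_{\geq N}f\|_{L^p_x}$. Equivalently, one can work directly with the rescaled symbol $N^{-s}\,\tilde m(\xi/N)$, where $\tilde m(\eta):=(1-\varphi(\eta))|\eta|^{-s}$ is smooth on all of $\mathbb{R}^n$ (the potential singularity at $0$ being killed by $1-\varphi$) with polynomial decay of order $s>0$ at infinity, which places it in the Mihlin class.

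The main obstacle throughout is the bookkeeping: ensuring that each modified multiplier $|\xi|^{\pm s}\chi(\xi/N)$, $|\xi|^{-s}(1-\varphi(\xi/N))$, and so on is arranged so that its inverse Fourier transform is a Schwartz (or at least $L^1$) kernel, so that the sole analytic input remains Young's convolution inequality and the scaling $N^{n(1-1/r)}$. Once the multiplier bookkeeping is done, all five inequalities follow as direct applications of Young, with the stated powers of $N$ appearing automatically from the dilation.
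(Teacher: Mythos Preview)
The paper does not actually supply a proof of this lemma: it is stated as a ``recalled'' standard Bernstein-type inequality with no argument given. So there is no paper proof to compare against, and your sketch is doing strictly more than the paper does.

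Your approach---reducing everything to Young's inequality applied to rescaled convolution kernels---is the standard one and is essentially correct. There is, however, one genuine slip. For the $P_{\leq N}$ bound you write ``choosing $\chi$ equal to $1$ on $\mathrm{supp}\,\varphi$ instead yields the companion bound,'' implicitly reusing the earlier reasoning that $m_+(\xi)=|\xi|^s\chi(\xi)$ is smooth with compact support and hence has Schwartz inverse Fourier transform. But once $\chi$ is required to equal $1$ on $\mathrm{supp}\,\varphi$, it cannot vanish near the origin, and $|\xi|^s$ is \emph{not} smooth at $\xi=0$ for general $s>0$. So $m_+$ is not $C^\infty$, and $\check m_+$ is not Schwartz. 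The conclusion $\check m_+\in L^1$ is still true---for instance, because $\partial^\alpha m_+\in L^1$ whenever $|\alpha|<s+n$, which forces $|\check m_+(x)|\lesssim (1+|x|)^{-n-s+\varepsilon}$---but your stated justification does not establish it. A clean fix is to observe that the kernel of $|\nabla|^s P_{\leq N}$ equals $N^{n+s}\,\check m_+(N\cdot)$ with $m_+(\eta)=|\eta|^s\varphi(\eta)$, and then argue directly that this fixed $\check m_+$ lies in $L^1(\mathbb{R}^n)$ by the decay estimate just mentioned.

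A smaller remark on the $P_{\geq N}$ estimate: your square-function reconstruction works only for $1<p<\infty$, as you note, while the Mihlin alternative you offer also does not reach $p=1,\infty$. If you want the full range $1\le p\le\infty$ claimed in the lemma, you again need to show the relevant kernel is in $L^1$; the multiplier $(1-\varphi(\eta))|\eta|^{-s}$ is smooth and decays like $|\eta|^{-s}$, so its inverse Fourier transform is integrable by a similar (but not identical) bookkeeping argument. None of this changes the architecture of your proof, only the justification of the ``kernel in $L^1$'' step at the two places where the symbol is not compactly supported away from the origin.
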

 \begin{definition} Let $I\times
\mathbb{R}^n$ be an arbitrary spacetime slab, we
 define the space
 \begin{eqnarray*}
 \dot{X}^0(I)=\left\{\begin{array}{ll}
                    L^q_tL^r_x(I\times
\mathbb{R}^n),\qquad& 0<p<\frac{4}{n-2},\\
                    L_t^{\frac{2(n+2)}{n-2}}L_x^{\frac{2n(n+2)}{n^2+4}}(I\times
\mathbb{R}^n)\cap V(I),\qquad& p=\frac{4}{n-2},
                     \end{array}
                     \right.
 \end{eqnarray*}
 where\ $q=\frac{4(p+2)}{p(n-2)}, r=\frac{n(p+2)}{n+p}$,\\
 and
 \begin{eqnarray*}
 &\dot{X}^1:=\{u: \nabla u\in\dot{X}^0(I)\},\qquad
 X^1(I):=\dot{X}^0(I)\cap\dot{X}^1(I),&\\
 \\
 &\dot{Y}^0(I):=\left\{\begin{array}{ll}
                    L^\infty_tL^2_x(I\times
\mathbb{R}^n),\qquad& 0<\gamma\leq 2,\\
                    L_t^\infty L_x^2(I\times
\mathbb{R}^n)\cap L_t^\mu L_x^\sigma(I\times \mathbb{R}^n), \qquad&
2<\gamma\leq 4\  and\  \gamma<n
                     \end{array}
                     \right.,&
 \end{eqnarray*}
 where\ $\mu=\frac{6}{\gamma-2},\sigma=\frac{6n}{3n+4-2\gamma}$,\\
 and
 \begin{eqnarray*}
 \dot{Y}^1:&=&\{u: \nabla u\in\dot{Y}^0(I)\},\qquad
 Y^1(I):=\dot{Y}^0(I)\cap\dot{Y}^1(I),\\
 \dot{B}^0(I):&=&\dot{X}^0(I)\cap\dot{Y}^0(I),\qquad\dot{B}^1:=\{u: \nabla
 u\in\dot{B}^0(I)\},\qquad
 B^1(I):=\dot{B}^0(I)\cap\dot{B}^1(I).
 \end{eqnarray*}
 \end{definition}\par
 Furthermore, we also need the following maximal estimate which
 is a direct con-sequence of the sharp Hardy inequality \cite{21}.
 \begin{lemma} Let $0<\gamma<n$, we have
 \begin{eqnarray}
 \parallel |x|^{-\gamma}\ast|u|^2\parallel_{L_x^\infty}\leq
 C(n,\gamma)||u||_{\dot{H}^{\frac{\gamma}{2}}}^2.
 \end{eqnarray}
 \end{lemma}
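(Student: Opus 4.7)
The plan is to reduce the pointwise convolution bound to the classical sharp Hardy (Herbst) inequality, exploiting translation invariance.

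First, I would fix an arbitrary $x_0\in\mathbb{R}^n$ and rewrite the convolution by a change of variables $z=y-x_0$:
\[
\bigl(|x|^{-\gamma}\ast|u|^2\bigr)(x_0) \;=\; \int_{\mathbb{R}^n}\frac{|u(y)|^2}{|x_0-y|^\gamma}\,dy \;=\; \int_{\mathbb{R}^n}\frac{|v(z)|^2}{|z|^\gamma}\,dz,
\]
where $v(z):=u(x_0+z)$. Since translation is an isometry on the homogeneous Sobolev space, $\|v\|_{\dot H^{\gamma/2}}=\|u\|_{\dot H^{\gamma/2}}$. Consequently, if I can establish the weighted inequality
\[
\int_{\mathbb{R}^n}\frac{|v(z)|^2}{|z|^\gamma}\,dz \;\leq\; C(n,\gamma)\,\|v\|_{\dot H^{\gamma/2}}^2
\]
uniformly in $v\in\dot H^{\gamma/2}$, then taking the supremum over $x_0$ yields the claimed $L^\infty_x$ bound with the same constant.

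The remaining step is the sharp fractional Hardy inequality above, which holds precisely for $0<\gamma<n$. I would derive it by a $TT^*$ argument: writing $v=|\nabla|^{-\gamma/2}w$ with $w\in L^2$, the desired bound becomes $\bigl\| |x|^{-\gamma/2}\,|\nabla|^{-\gamma/2}w\bigr\|_{L^2}\lesssim \|w\|_{L^2}$, and by duality this is equivalent to $L^2$-boundedness of the operator $|\nabla|^{-\gamma/2}\,|x|^{-\gamma}\,|\nabla|^{-\gamma/2}$. Since $|\nabla|^{-\gamma/2}$ is (up to a constant) the Riesz potential $I_{\gamma/2}$, a double application of the Hardy–Littlewood–Sobolev inequality gives exactly this bound; alternatively one can quote Herbst's theorem directly, as the paper does via reference \cite{21}.

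The range condition $0<\gamma<n$ is not really an obstacle but merely records where both ingredients are meaningful: $\gamma<n$ is required for local integrability of $|z|^{-\gamma}$ near the origin, while $\gamma>0$ is needed so that $I_{\gamma/2}$ is a genuine Riesz potential and Hardy–Littlewood–Sobolev applies. No endpoint issues arise in the open range under consideration. The only delicate point is keeping the constant $C(n,\gamma)$ independent of $x_0$, which is automatic because the change of variables above eliminates $x_0$ from the right-hand side entirely before the Hardy inequality is invoked.
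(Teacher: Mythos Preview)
Your proposal is correct and follows essentially the same route as the paper: the paper states the lemma as ``a direct consequence of the sharp Hardy inequality'' with a citation to \cite{21}, and your argument makes this reduction explicit by translating to the origin and then invoking the fractional Hardy/Herbst inequality $\int_{\mathbb{R}^n}|v(z)|^2|z|^{-\gamma}\,dz\leq C(n,\gamma)\|v\|_{\dot H^{\gamma/2}}^2$. The only minor remark is that your ``double application of Hardy--Littlewood--Sobolev'' for the $T^*T$ bound is more accurately the Stein--Weiss weighted form of HLS, but since you also offer the direct citation to Herbst (which is exactly what the paper does), this is not a gap.
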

 \begin{lemma} Let I be a compact time interval, $0<p\leq\frac{4}{n-2},0<\gamma\leq
 4$ and $\gamma<n, \lambda_1$ and $\lambda_2$ be nonzero real numbers,
 and $k=0,1$. Then
 \begin{eqnarray}
&&\parallel\lambda_1|u|^pu+\lambda_2(|x|^{-\gamma}\ast|u|^2)u\parallel_{\dot{N}^k(I\times\mathbb{R}^n)}\nonumber\\
&&\lesssim|I|^{1-\frac{p(n-2)}{4}}\parallel
 u\parallel^p_{\dot{X}^1(I)}\parallel
 u\parallel_{\dot{X}^k(I)}+|I|^\alpha\parallel
 u\parallel^2_{\dot{Y}^1(I)}\parallel u\parallel_{\dot{Y}^k(I)}\label{2.4}\\
&&\parallel\left(\lambda_1|u|^pu+\lambda_2(|x|^{-\gamma}\ast|u|^2)u\right)-
 \left(\lambda_1|v|^pv+\lambda_2(|x|^{-\gamma}\ast|v|^2)v\right)\parallel_{\dot{N}^0(I\times\mathbb{R}^n)}\nonumber\\
&& \lesssim
 |I|^{1-\frac{p(n-2)}{4}}\left(\parallel
 u\parallel^p_{\dot{X}^1(I)}+\parallel
 v\parallel^p_{\dot{X}^1(I)}\right)\parallel u-v\parallel_{\dot{X}^0(I)}+|I|^\alpha\left(\parallel
 u\parallel^2_{\dot{Y}^1(I)}+\parallel
 v\parallel^2_{\dot{Y}^1(I)}\right)\parallel u-v\parallel_{\dot{Y}^0(I)},\nonumber\\
 &&\\
 &&where\quad
  \alpha=\left\{\begin{array}{ll}
                  1\qquad& 0<\gamma\leq 2\\
                  2-\frac{\gamma}{2}\ & 2<\gamma\leq
                  4\ \mbox{and}\
                  \gamma<n.
                  \end{array}
                  \right.
                  \nonumber
 \end{eqnarray}
 \end{lemma}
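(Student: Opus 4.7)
The plan is to prove both inequalities by splitting the nonlinearity into its power part $\lambda_1|u|^p u$ and its Hartree part $\lambda_2(|x|^{-\gamma}\ast|u|^2)u$, estimating each in $\dot N^k$ separately and summing. For the $k=1$ case, I would use a fractional chain rule for the power term and the Leibniz rule for the Hartree term, so that each reduces, up to placing a single gradient on one factor of $u$, to the $k=0$ analysis. The Lipschitz-type difference inequality then follows from the pointwise decompositions
\[
\bigl||u|^p u - |v|^p v\bigr| \lesssim (|u|^p + |v|^p)|u-v|,
\]
\[
\bigl(|x|^{-\gamma}\ast|u|^2\bigr)u - \bigl(|x|^{-\gamma}\ast|v|^2\bigr)v = \bigl(|x|^{-\gamma}\ast(|u|^2-|v|^2)\bigr)u + \bigl(|x|^{-\gamma}\ast|v|^2\bigr)(u-v),
\]
combined with $\bigl||u|^2-|v|^2\bigr|\le(|u|+|v|)|u-v|$. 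So I concentrate on the first inequality.

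For the power term, I target the dual Strichartz space $L^{q'}_t L^{r'}_x$ associated to the admissible pair $(q,r)=(4(p+2)/(p(n-2)), n(p+2)/(n+p))$ built into $\dot X^0$. Hölder in space splits $|u|^{p}u$ into $p$ copies placed in $L^{r^*}_x$, where $r^*=n(p+2)/(n-2)$ is the Sobolev conjugate of $r$, and one copy in $L^r_x$. By the Sobolev embedding $\|\cdot\|_{L^{r^*}_x}\lesssim\|\nabla\cdot\|_{L^r_x}$ the first factor produces $\|u\|^p_{\dot X^1}$ and the second produces $\|u\|_{\dot X^k}$ (after inserting $\nabla$ via fractional chain rule when $k=1$). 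In time, combining $L^{q/p}_t$ with $L^q_t$ yields $L^{q/(p+1)}_t$, and the gap to $L^{q'}_t$ gives exactly $|I|^{1-p(n-2)/4}$ by Hölder. At the energy-critical endpoint $p=4/(n-2)$ this power vanishes, and the $V(I)$ and $L^{2(n+2)/(n-2)}_{t,x}$ components included in the definition of $\dot X^0$ cover it.

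For the Hartree term I split on $\gamma$. When $0<\gamma\le 2$, Lemma 2.3 gives $\||x|^{-\gamma}\ast|u|^2\|_{L^\infty_x}\lesssim\|u\|^{2}_{\dot H^{\gamma/2}}$, and I target $L^1_t L^2_x$, which is dual to $(q,r)=(\infty,2)$ and matches $\dot Y^0 = L^\infty_t L^2_x$ in this range; Hölder in time contributes the factor $|I|^1=|I|^\alpha$. When $2<\gamma\le 4$ with $\gamma<n$, the $L^\infty_x$ approach is too crude; I instead apply Hardy-Littlewood-Sobolev to place the convolution in a finite Lebesgue exponent, paired precisely with $(\mu,\sigma)=(6/(\gamma-2), 6n/(3n+4-2\gamma))$ that appears in the second component of $\dot Y^0$. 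Hölder in time then produces $|I|^{2-\gamma/2}$, which matches $\alpha$ and vanishes at the Hartree-energy-critical endpoint $\gamma=4$, in agreement with the scaling.

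The main obstacle I expect is the bookkeeping in the Hartree regime $2<\gamma\le 4$: one must verify that the pair $(\mu,\sigma)$ prescribed in $\dot Y^0$, the Hardy--Littlewood--Sobolev indices, and the dual-Strichartz admissibility are simultaneously compatible and yield the precise time exponent $2-\gamma/2$. A secondary technical point is the fractional chain rule for the power term when $p$ is not an integer and, in particular, when $p<1$, where a Hölder-continuity version must be used; this is standard but deserves care. Once these two pieces are handled, adding the power and Hartree bounds gives \eqref{2.4}, and running the same arguments along the two telescoping decompositions above produces the difference inequality.
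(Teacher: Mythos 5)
Your toolkit (H\"older, Sobolev embedding, Hardy--Littlewood--Sobolev, and the maximal estimate $\||x|^{-\gamma}\ast|u|^2\|_{L^\infty}\lesssim\|u\|^2_{\dot H^{\gamma/2}}$, which is Lemma~2.4 in the paper, not~2.3) is exactly the one the paper invokes in its one-line proof, and your bookkeeping for the power term and for the $\gamma>2$ Hartree branch is sound. So the route is the same; the paper simply does not write out the details you supply.

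However, in the regime $0<\gamma\le 2$ your plan has a real gap that you should confront rather than pass over. Targeting $L^1_tL^2_x$ and bounding $\||x|^{-\gamma}\ast|u|^2\|_{L^\infty}$ via Lemma~2.4 produces
\[
\|(|x|^{-\gamma}\ast|u|^2)u\|_{\dot N^k(I)}\lesssim |I|\,\|u\|^2_{L^\infty_t\dot H^{\gamma/2}_x}\,\|u\|_{\dot Y^k(I)},
\]
which is \emph{not} the stated bound $|I|\,\|u\|^2_{\dot Y^1}\,\|u\|_{\dot Y^k}$ with $\dot Y^1=\{\nabla u\in L^\infty_tL^2_x\}$. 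The two agree only at $\gamma=2$; for $\gamma<2$ a rescaling $u(t,\lambda x)$ shows the stated inequality fails as $\lambda\to 0$ (the left side scales like $\lambda^{\gamma-3n/2}$, the right like $\lambda^{2-3n/2}$). The scaling-consistent form one actually obtains after interpolating $\dot H^{\gamma/2}$ between $L^2$ and $\dot H^1$ is $|I|\,\|u\|_{\dot Y^0}^{2-\gamma}\|u\|_{\dot Y^1}^{\gamma}\|u\|_{\dot Y^k}$, and this is what the applications in Section~4 actually need, since there both $\dot Y^0$ and $\dot Y^1$ norms are simultaneously controlled. So either prove that corrected version, or state explicitly that you are using the inhomogeneous $Y^1=\dot Y^0\cap\dot Y^1$ norm; as written, your plan silently claims an inequality stronger than what your own method (or any method, by scaling) can give when $\gamma<2$. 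The analogous care is needed when pinning down the $L^\mu_tL^\sigma_x$ component for $2<\gamma\le 4$, where the two gradients land on $u$ only after a Sobolev embedding and you should verify that the resulting exponent balance really produces $\|u\|^2_{\dot Y^1}\|u\|_{\dot Y^k}$ rather than a mixed product of $\dot Y^0$ and $\dot Y^1$ norms.
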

 \begin{proof}: Using H\"{o}lder, Sobolev embedding,
 Hardy-Littlewood-Sobolev inequality and Lemma 2.4, we can
 obtain the results.
 \end{proof}
 \begin{lemma}\label{lemma2.6}
 Let $k=0,1,\ \frac{4}{n}<p<\frac{4}{n-2}$ and $2<\gamma<\min\{4,n\}$.
 Then there exists $\theta>0$ large enough such that on each slab
 $I\times\mathbb{R}^n$, we have
 \begin{eqnarray}
 \parallel|u|^pu\parallel_{\dot{N}^k(I\times\mathbb{R}^n)}\lesssim
 \parallel u\parallel_{\dot{S}^k(I\times\mathbb{R}^n)}\parallel
 u\parallel_{Z(I)}^\frac{n+1}{2(2\theta+1)}\parallel u\parallel_{L_t^\infty
 L_x^2}^{\alpha_1(\theta)}\parallel u\parallel_{L_t^\infty
 L_x^\frac{2n}{n-2}}^{\alpha_2(\theta)},\label{2.6}\\
 \parallel(|x|^{-\gamma}\ast|u|^2)u\parallel_{\dot{N}^k(I\times\mathbb{R}^n)}\lesssim
 \parallel u\parallel_{\dot{S}^k(I\times\mathbb{R}^n)}\parallel
 u\parallel_{Z(I)}^\frac{n+1}{2(2\theta+1)}\parallel u\parallel_{L_t^\infty
 L_x^2}^{\beta_1(\theta)}\parallel u\parallel_{L_t^\infty
 L_x^\frac{2n}{n-2}}^{\beta_2(\theta)},\label{2.7}\end{eqnarray}
 where

 \begin{eqnarray*}
 \alpha_1(\theta)&=&p(1-\frac{n}{2})+\frac{8\theta+1}{2(2\theta+1)},\quad
 \alpha_2(\theta)=\frac{n}{2}\left(p-\frac{n+8\theta+2}{n(2\theta+1)}\right),\\
 \beta_1(\theta)&=&(3-\gamma)+\frac{4\theta-1}{2(2\theta+1)},\qquad
 \beta_2(\theta)=(\gamma-1)-\frac{4\theta+n}{2(2\theta+1)}.
 \end{eqnarray*}
 \end{lemma}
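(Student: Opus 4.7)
The plan is to derive both bounds by combining three tools: (i) a splitting of the nonlinearity into one factor that will occupy the $\dot S^k$ slot and a cluster of ``passive'' factors; (ii) H\"older's inequality in the mixed $L^q_t L^r_x$ spaces to distribute the passive factors among the three reference spaces $Z(I)$, $L^\infty_t L^2_x$ and $L^\infty_t L^{2n/(n-2)}_x$; and (iii) the Hardy--Littlewood--Sobolev inequality to treat the convolution kernel in the Hartree term. The parameter $\theta$ enters because, once we require the $Z$-exponent to be small, there remain two H\"older balances to satisfy (one in time, one in space), and the two $L^\infty_t$ reference norms provide exactly the two free parameters.

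For the power term I would first reduce the claim to bounding $\||u|^pu\|_{L^{q_0'}_tL^{r_0'}_x}$ for a single admissible pair $(q_0,r_0)$, using $L^{q_0'}_tL^{r_0'}_x\hookrightarrow \dot N^0$. Write $|u|^p u=u\cdot|u|^p$ and apply H\"older in $(t,x)$: one factor of $u$ lands in an admissible Strichartz space $L^{q_1}_tL^{r_1}_x$ (bounded by $\|u\|_{\dot S^0}$), while the remaining $p$ factors land in $L^{pq_2}_tL^{pr_2}_x$. Then interpolate
\[
\|u\|_{L^{pq_2}_tL^{pr_2}_x}\le \|u\|_{Z(I)}^{a_Z}\|u\|_{L^\infty_tL^2_x}^{a_1}\|u\|_{L^\infty_tL^{2n/(n-2)}_x}^{a_2},\qquad a_Z+a_1+a_2=1,
\]
by matching $1/(pq_2)$ and $1/(pr_2)$ against the time and space exponents of the three reference spaces. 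Solving this $2\times 2$ linear system (with $a_Z$ treated as the free parameter) and then multiplying through by $p$ produces the three exponents $\frac{n+1}{2(2\theta+1)}$, $\alpha_1(\theta)$ and $\alpha_2(\theta)$, with $\theta$ encoding the free parameter. Checking $\alpha_1+\alpha_2+\frac{n+1}{2(2\theta+1)}=p$ confirms the bookkeeping. The $k=1$ case is identical after one applies the fractional chain rule to $\nabla(|u|^pu)$ so that $\nabla u$ replaces $u$ in the $\dot S^1$ slot.

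For the Hartree term I would again split $(|x|^{-\gamma}\ast|u|^2)u = u\cdot(|x|^{-\gamma}\ast|u|^2)$ and H\"older out one factor of $u$ into an admissible Strichartz space as above. The convolution factor is then treated by Hardy--Littlewood--Sobolev in $x$,
\[
\||x|^{-\gamma}\ast|u|^2\|_{L^{r_3}_x}\lesssim \|u\|_{L^{r_4}_x}^2,\qquad \frac{2}{r_4}=\frac{1}{r_3}+\frac{\gamma}{n},
\]
which reduces the remaining estimate to bounding $\|u\|_{L^{2q_3}_tL^{2r_4}_x}^{2}$. Running exactly the same three-space interpolation produces a total $Z$-exponent of $\frac{n+1}{2(2\theta+1)}$ and two remaining weights that simplify to $\beta_1(\theta)$ and $\beta_2(\theta)$; the sum of the three equals $2$ rather than $p$, because only two factors of $u$ are interpolated outside the $\dot S^k$ slot, consistent with the cubic rather than $(p{+}1)$-power structure.

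The main technical hurdle is not the H\"older juggling but verifying that, for $\frac{4}{n}<p<\frac{4}{n-2}$ and $2<\gamma<\min\{4,n\}$, one can fix a single admissible pair $(q_0,r_0)$, a companion admissible pair $(q_1,r_1)$, and a value of $\theta$ large enough that all interpolation weights are non-negative and the HLS exponent condition is satisfiable simultaneously. The strict inequalities $p>4/n$ and $\gamma>2$ are exactly what make the limiting exponents of $\alpha_2(\theta)$ and $\beta_2(\theta)$ as $\theta\to\infty$ strictly positive, leaving room to siphon off a small $Z$-contribution; this is why the lemma excludes the mass-critical and $\gamma=2$ endpoints. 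Once this bookkeeping is in place, the two inequalities \eqref{2.6} and \eqref{2.7} follow directly.
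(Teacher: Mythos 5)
Your proposal is correct and follows essentially the same route as the paper: the paper cites Tao--Visan--Zhang for the power-type bound and proves the Hartree bound exactly as you describe, placing $|\nabla|^k u$ in the $\theta$-dependent admissible pair $\bigl(2+\tfrac{1}{\theta},\tfrac{2n(2\theta+1)}{n(2\theta+1)-4\theta}\bigr)$, applying H\"older and Hardy--Littlewood--Sobolev to distribute the remaining two factors among $Z(I)$, $L^\infty_t L^2_x$ and $L^\infty_t L^{2n/(n-2)}_x$, and checking positivity of the exponents from the limits $\beta_1\to 4-\gamma$, $\beta_2\to\gamma-2$ (respectively $\alpha_2\to\tfrac{np}{2}-2$) as $\theta\to\infty$. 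Your exponent bookkeeping ($\alpha_1+\alpha_2+\tfrac{n+1}{2(2\theta+1)}=p$ and $\beta_1+\beta_2+\tfrac{n+1}{2(2\theta+1)}=2$) is accurate, so no gap to report.
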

 \begin{proof}
 For the former, one can find in \cite{24}. The same method
 can be used for the latter, we have
 \begin{eqnarray}
 \parallel(|x|^{-\gamma}\ast|u|^2)u\parallel_{\dot{N}^k(I\times\mathbb{R}^n)}&\lesssim&
 \parallel|\nabla|^k\left[(|x|^{-\gamma}\ast|u|^2)u\right]\parallel_{L_t^2L_x^\frac{2n}{n+2}(I\times\mathbb{R}^n)}\nonumber\\
 &\lesssim&\parallel|\nabla|^ku\parallel_{L_t^{2+\frac{1}{\theta}}L_x^{\frac{2n(2\theta+1)}{n(2\theta+1)-4\theta}}}
 \parallel u\parallel_{Z(I)}^{\frac{n+1}{2(2\theta+1)}}\parallel
 u\parallel_{L_t^\infty L_x^2}^{\beta_1(\theta)}\parallel
 u\parallel_{L_t^\infty
 L_x^{\frac{2n}{n-2}}}^{\beta_2(\theta)}\label{2.8}
 \end{eqnarray}
 which is obtained by using $H\ddot{o}lder$ and
 Hardy-Littlewood-Sobolev inequality, once $\beta_1(\theta)$ and
 $\beta_2(\theta)$ are positive.\\
 Note that
 $\left(2+\frac{1}{\theta},\frac{2n(2\theta+1)}{n(2\theta+1)-4\theta}\right)$
 is Schr\"{o}dinger-admissible. When $2<\gamma<4$, $\beta_1(\theta)$ and
 $\beta_2(\theta)$ will be positive if $\theta$ is large enough, because
 the above functions are increased in $\theta$, and
 when $\theta\rightarrow\infty$,
 \begin{eqnarray*}
 \beta_1(\theta)\rightarrow4-\gamma>0,\qquad
 \beta_2(\theta)\rightarrow\gamma-2>0.
 \end{eqnarray*}
 \end{proof}
 \begin{lemma}\label{lemma}
 Let $I\times\mathbb{R}^n$ be a spacetime slab. Then there exists a
 small constant $0<\rho<1$ such that
 \begin{eqnarray}
 \parallel|u|^\frac{4}{n-2}u\parallel_{\dot{N}^0(I\times\ast\mathbb{R}^n)}
 &\lesssim&\parallel u\parallel_{Z(I)}^\rho\parallel
 u\parallel_{S^1(I\times\ast\mathbb{R}^n)}^{\frac{n+2}{n-2}-\rho}\\
 \parallel(|x|^{-4}\ast|u|^2)u\parallel_{\dot{N}^0(I\times\mathbb{R}^n)}
 &\lesssim&\parallel(|x|^{-4}\ast|u|^2)u\parallel_{L_t^2L_x^\frac{2n}{n+2}(I\times\mathbb{R}^n)}\nonumber\\
 &\lesssim&\parallel
 u\parallel_{L_t^{2+\varepsilon}L_x^{\frac{2n}{n-2-\varepsilon}}}\parallel
 u\parallel_{Z(I)}^\rho\parallel
 u\parallel_{L_t^\infty L_x^2}^{\frac{\varepsilon(1+\varepsilon)}{2(2+\varepsilon)}}\parallel
 u\parallel_{L_t^\infty L_x^{\frac{2n}{n-2}}}^{2-\frac{\varepsilon(2+\varepsilon+n)}{2(2+\varepsilon)}}\nonumber\\
 &\lesssim&\parallel
 u\parallel_{Z(I)}^\rho\parallel
 u\parallel_{S^1(I\times\mathbb{R}^n)}^{3-\rho},\\
 \mbox{where}\
 \rho=\frac{\varepsilon(n+1)}{2(2+\varepsilon)}\ \mbox{and}\  \varepsilon\  \mbox{is a small
 constant}.\nonumber
\end{eqnarray}
\end{lemma}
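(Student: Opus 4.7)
The plan is to derive both bounds by first reducing the $\dot N^0$-norm to $L^2_tL^{2n/(n+2)}_x$ via the dual endpoint Strichartz embedding, and then performing a trilinear H\"older/Hardy--Littlewood--Sobolev splitting in space and time. The first inequality, the energy-critical pure-power estimate, is standard in the energy-critical NLS literature and the paper attributes it to \cite{24}; schematically one starts from the elementary bound $\parallel|u|^{4/(n-2)}u\parallel_{L^2_tL^{2n/(n+2)}_x}\lesssim\parallel u\parallel_W^{(n+2)/(n-2)}$ and peels off a small slice of the $W$-integrability as $\parallel u\parallel_Z^\rho$, so that the remaining $(n+2)/(n-2)-\rho$ factors of $u$ get packaged into $\parallel u\parallel_{S^1}$ by Lemma 2.1.

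For the Hartree estimate, the genuine new work, I would write $(|x|^{-4}\ast|u|^2)u$ as a product of three copies of $u$ and apply H\"older in space to isolate the outermost $u$ in $L^{2n/(n-2-\varepsilon)}_x$, placing the convolution in the H\"older-dual exponent. Hardy--Littlewood--Sobolev, valid since $0<4<n$, then replaces $\parallel|x|^{-4}\ast|u|^2\parallel_{L^q_x}$ by $\parallel|u|^2\parallel_{L^{q'}_x}$, and a second spatial H\"older distributes $|u|^2$ between $L^2_x$ (the mass factor) and $L^{2n/(n-2)}_x$ (the potential-energy factor). In time I take the outermost $L^2_t$, carve off a $\rho$-slice of $L^{n+1}_t$-integrability to produce the $Z$-norm, and put the rest into $L^\infty_t$. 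The explicit exponents $\varepsilon(1+\varepsilon)/(2(2+\varepsilon))$, $2-\varepsilon(n+2+\varepsilon)/(2(2+\varepsilon))$, and $\rho=\varepsilon(n+1)/(2(2+\varepsilon))$ are exactly what the three H\"older and HLS scaling conditions force; one checks that they sum to the correct total power $3$ in $u$.

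The last step absorbs the residual factors into $\parallel u\parallel_{S^1}^{3-\rho}$. The $L^\infty_tL^2_x$ and $L^\infty_tL^{2n/(n-2)}_x$ pieces are immediate from Lemma 2.1. The leftover $L^{2+\varepsilon}_tL^{2n/(n-2-\varepsilon)}_x$ factor is \emph{not} Schr\"odinger-admissible for $\varepsilon>0$, so I would bound it by choosing a nearby admissible pair $(q^{*},r^{*})$, using spatial Sobolev on $\nabla u\in L^{q^{*}}_tL^{r^{*}}_x$, and interpolating with the Lemma 2.1 bound $u\in L^\infty_tL^{2n/(n-2)}_x$; this works as a small perturbation of the admissible endpoint $(2,2n/(n-2))$. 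The main obstacle I anticipate is precisely this exponent bookkeeping --- simultaneously verifying H\"older balances in $(t,x)$, HLS scaling, the perturbative admissibility used to close the $\dot S^1$ estimate, and the constraint $0<\rho<1$. Each condition is continuous in $\varepsilon$ and holds at $\varepsilon=0$, so all are satisfied for every sufficiently small $\varepsilon>0$, after which the estimates follow from routine applications of H\"older, HLS and the Strichartz/Sobolev bounds of Lemmas 2.1 and 2.2.
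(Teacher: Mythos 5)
Your overall architecture (reduce $\dot N^0$ to $L_t^2L_x^{2n/(n+2)}$, cite \cite{24} for the power term, then a H\"older/HLS splitting in space--time producing the factors $\parallel u\parallel_{Z}^{\rho}$, $\parallel u\parallel_{L_t^\infty L_x^2}^{a(\varepsilon)}$, $\parallel u\parallel_{L_t^\infty L_x^{2n/(n-2)}}^{b(\varepsilon)}$ and one factor in $L_t^{2+\varepsilon}L_x^{2n/(n-2-\varepsilon)}$) is exactly the paper's, and the exponent bookkeeping there does close (the reciprocal exponents balance and the powers sum to $3$). The genuine gap is in your final absorption step, where you claim to control $\parallel u\parallel_{L_t^{2+\varepsilon}L_x^{2n/(n-2-\varepsilon)}}$ by interpolating the Sobolev embedding of $\nabla u$ in a nearby admissible pair $(q^{*},r^{*})$ against the Lemma 2.1 bound $u\in L_t^\infty L_x^{2n/(n-2)}$. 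Both of those input norms sit at $\dot H^1$ scaling, i.e.\ $\frac2q+\frac nr=\frac n2-1$, whereas the target satisfies $\frac{2}{2+\varepsilon}+\frac{n(n-2-\varepsilon)}{2n}=\frac n2-s$ with $s=\frac{\varepsilon(4+\varepsilon)}{2(2+\varepsilon)}\in(0,1)$; no interpolation of two $\dot H^1$-level norms can reach it. Concretely, with $\frac1{\tilde r}=\frac1{r^{*}}-\frac1n$ and weight $\theta$ on the pair $(q^{*},\tilde r)$, matching the time exponent forces $\theta=\frac{q^{*}}{2+\varepsilon}$ while matching the space exponent forces $\theta=\frac{\varepsilon q^{*}}{4}$, and these coincide only when $\varepsilon(2+\varepsilon)=4$, i.e.\ $\varepsilon=\sqrt5-1$ --- far outside the regime of small $\varepsilon$ that the positivity of $a(\varepsilon)$, $b(\varepsilon)$, $\rho$ requires. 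So your ``each condition holds at $\varepsilon=0$ and is continuous'' argument does not apply to this step: at $\varepsilon=0$ the target is the endpoint admissible $\dot S^0$ norm $L_t^2L_x^{2n/(n-2)}$, which is a different (derivative-free) mechanism, not a limit of yours.

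The repair is what the paper does: keep the time exponent $2+\varepsilon$ fixed and interpolate between the admissible $\dot S^0$ norm $L_t^{2+\varepsilon}L_x^{\frac{2n(2+\varepsilon)}{n(2+\varepsilon)-4}}$ (no derivative) and the Sobolev image of the $\dot S^1$ norm with that same admissible pair (one full derivative), so that $L_t^{2+\varepsilon}L_x^{2n/(n-2-\varepsilon)}$ is reached by spending only the fractional amount $s$ of regularity; equivalently, $\parallel u\parallel_{L_t^{2+\varepsilon}L_x^{2n/(n-2-\varepsilon)}}\lesssim \parallel |\nabla|^{s}u\parallel_{L_t^{2+\varepsilon}L_x^{r^{*}}}\lesssim\parallel u\parallel_{\dot S^0}^{1-s}\parallel u\parallel_{\dot S^1}^{s}\lesssim\parallel u\parallel_{S^1}$. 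The essential point your proposal misses is that this factor genuinely needs the $\dot S^0$ (mass-level) component of $S^1$, not two energy-level bounds. (A cosmetic caveat for both you and the paper: the displayed $\dot S^1$-Sobolev exponent degenerates in low dimensions, but the fractional-derivative formulation above, or the restriction $n\ge5$ forced anyway by HLS with $|x|^{-4}$, avoids the issue.)
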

 \begin{proof}
 The first result is proved in \cite{24}. For the other, note that
 $L_t^{2+\varepsilon}L_x^{\frac{2n}{n-2-\varepsilon}}$ interpolates between the
 $\dot{S}^0$-norm
 $L_t^{2+\varepsilon}L_x^{\frac{2n(2+\varepsilon)}{n(2+\varepsilon)-4}}$ and the $\dot{S}^1$-norm
 $L_t^{2+\varepsilon}L_x^{\frac{2n(2+\varepsilon)}{n(2+\varepsilon)-2(4+\varepsilon)}}$
 provided $\varepsilon$
 is sufficiently small, we have
 \[
 \parallel
 u\parallel_{L_t^{2+\varepsilon}L_x^{\frac{2n}{n-2-\varepsilon}}}\lesssim\parallel
 u\parallel_{S^1(I\times\mathbb{R}^n)}.
 \]
 Let $a(\varepsilon)=\frac{\varepsilon(1+\varepsilon)}{2(2+\varepsilon)},\ b(\varepsilon)=2-\frac{\varepsilon(n+2+\varepsilon)}{2(2+\varepsilon)}$,
   we only need to check
 $a(\varepsilon)$ and $ b(\varepsilon)$ are positive, since then the
 estimates is a simple consequence of H\"older inequality and
 Hardy-Littlewood-Sobolev inequality. As a function of $\varepsilon$,
 $a$ is increasing and $a(0)=0$, while $b$ is decreasing and $b(0)=2$.
 Thus, taking $\varepsilon>0$ sufficient small, we have $a(\varepsilon)>0,\
 b(\varepsilon)>0$. Taking
 $\rho=\frac{\varepsilon(n+1)}{2(2+\varepsilon)}$, we obtain the
 result.
 \end{proof}
 \begin{remark}\label{remark2.1}
 An easy consequence of the proof of Lemma 2.7 is that one can get the estimates for
 nonlinearities of the form $|u|^{\frac{4}{n-2}}v$ and
 $(|x|^{-\gamma}\ast|u|^2)v$. More precisely, we have
 \begin{eqnarray}
 \parallel
 |u|^{\frac{4}{n-2}}v\parallel_{\dot{N}^0(I\times\mathbb{R}^n)}
 &\lesssim&\parallel
 u\parallel_{Z(I)}^\rho\parallel
 u\parallel_{S^1(I\times\mathbb{R}^n)}^{\frac{4}{n-2}-\rho}\parallel
 v\parallel_{S^1(I\times\mathbb{R}^n)},\\
 \parallel(|x|^{-\gamma}\ast|u|^2)v\parallel_{\dot{N}^0(I\times\mathbb{R}^n)}
 &\lesssim&\parallel
 u\parallel_{Z(I)}^\rho\parallel
 u\parallel_{S^1(I\times\mathbb{R}^n)}^{2-\rho}\parallel
 v\parallel_{S^1(I\times\mathbb{R}^n)},\\
 \parallel\left(|x|^{-\gamma}\ast (wv)\right)v\parallel_{\dot{N}^0(I\times\mathbb{R}^n)}
 &\lesssim&\parallel
 u\parallel_{S^1(I\times\mathbb{R}^n)}\parallel
 w\parallel_{L_t^\infty L_x^2}^{a(\varepsilon)}\parallel
 v\parallel_{Z(I)}^\rho\parallel v\parallel_{L_t^\infty
 L_x^{\frac{2n}{n-2}}}^{b(\varepsilon)}.
 \end{eqnarray}
 \end{remark}
 \begin{lemma}\label{lemma2.8}
Let $I\times\mathbb{R}^n$ be an arbitrary spacetime slab,
$\frac{4}{n}\leq p\leq\frac{4}{n-2},\ 2\leq \gamma\leq 4$ with
$\gamma<n$, and $k=0,1$. Then
\begin{eqnarray*}
\parallel |u|^pu\parallel_{\dot{N}^k(I\times\mathbb{R}^n)}&\lesssim&\parallel u\parallel_{V(I)}^{2-\frac{(n-2)p}{2}}\parallel u\parallel_{W(I)}^{
\frac{np}{2}-2}\parallel|\nabla|^ku\parallel_{V(I)},\\
\parallel(|x|^{-\gamma}\ast|u|^2)u\parallel_{\dot{N}^k(I\times\mathbb{R}^n)}&\lesssim&\parallel u\parallel_{U(I)}^{4-\gamma}\parallel u\parallel_{L_t^6L_x^\frac{6n}{3n-8}(I\times\mathbb{R}^n)}^{\gamma-2}\parallel|\nabla|^ku\parallel_{U(I)}
.\end{eqnarray*}
 \end{lemma}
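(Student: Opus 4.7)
The plan is to reduce each inequality to a direct application of H\"older's inequality and (for the Hartree term) the Hardy-Littlewood-Sobolev inequality, after embedding the abstract $\dot{N}^0$ space into a concrete dual Lebesgue space via one admissible Strichartz pair.

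For the first estimate, the key observation is that the exponents
\[
a_1 := 2-\tfrac{(n-2)p}{2}, \qquad a_2 := \tfrac{np}{2}-2
\]
are both nonnegative exactly in the regime $\tfrac{4}{n}\le p\le\tfrac{4}{n-2}$, and satisfy $a_1+a_2=p$. I would then write $|u|^p u = |u|^{a_1}\cdot|u|^{a_2}\cdot u$ and apply H\"older in space-time, placing the first factor in $V$, the second in $W$, and the last $u$ also in $V$. A short check shows the sum of reciprocals is $\tfrac{n+4}{2(n+2)}$, which matches the exponent of $L^{\frac{2(n+2)}{n+4}}_{t,x}$; this space embeds into $\dot{N}^0$ because it is the H\"older dual of the diagonal admissible pair $V=L^{\frac{2(n+2)}{n}}_{t,x}$. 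For $k=1$, the pointwise bound $|\nabla(|u|^p u)|\lesssim |u|^p\,|\nabla u|$ (valid for $p>0$) reduces the problem to the same scheme with $|\nabla u|$ in the role of the last factor $u$.

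For the Hartree estimate, I would embed $\dot{N}^0$ into $L^2_tL^{\frac{2n}{n+2}}_x$, the H\"older dual of the admissible pair $(2,\tfrac{2n}{n-2})$. The key splitting is $|u|^2=|u|^{4-\gamma}\cdot|u|^{\gamma-2}$, whose exponents are nonnegative precisely for $2\le\gamma\le 4$. By H\"older in $x$ I would put $|u|^{4-\gamma}$ in $L^r_x$ and $|u|^{\gamma-2}$ in $L^{\tilde{r}}_x$, where $r=\tfrac{6n}{3n-2}$ is the spatial $U$-exponent and $\tilde{r}=\tfrac{6n}{3n-8}$. Hardy-Littlewood-Sobolev then moves the convolution $|x|^{-\gamma}\ast|u|^2$ with integrability gain $\tfrac{n-\gamma}{n}$, its range conditions being satisfied because $\gamma<n$ and $n\ge 3$. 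Multiplying by the outer $u\in L^r_x$ and taking all three factors in $L^6_t$ produces the desired $L^2_tL^{\frac{2n}{n+2}}_x$ norm, and a brief arithmetic check confirms that the spatial reciprocals sum to $\tfrac{n+2}{2n}$ for every $\gamma\in[2,4]$. For $k=1$ the product rule splits the derivative into
\[
\bigl(|x|^{-\gamma}\ast 2\,\mathrm{Re}(\bar u\,\nabla u)\bigr)\,u \quad\text{and}\quad \bigl(|x|^{-\gamma}\ast|u|^2\bigr)\,\nabla u,
\]
each of which I would handle by the same scheme, placing $\nabla u$ in the $U$ slot and distributing the remaining two $u$'s as $4-\gamma$ copies in $L^r$ and $\gamma-2$ copies in $L^{\tilde{r}}$.

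The argument is essentially mechanical, so the only real obstacle is bookkeeping: verifying that every intermediate Lebesgue exponent lies strictly between $1$ and $\infty$, which is needed for both H\"older and HLS. Non-integer split powers cause no trouble, since H\"older is valid for any positive exponents (equivalently, the splitting can be viewed as complex interpolation between $L^r$ and $L^{\tilde{r}}$). The standing hypotheses $\tfrac{4}{n}\le p\le\tfrac{4}{n-2}$ and $2\le\gamma\le 4$ with $\gamma<n$ are exactly what is required to keep every exponent in range throughout the calculation.
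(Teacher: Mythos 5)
Your proposal is correct and follows essentially the same route as the paper: bound $\dot{N}^k$ by the dual Strichartz norms $L_{t,x}^{\frac{2(n+2)}{n+4}}$ and $L_t^2L_x^{\frac{2n}{n+2}}$, then conclude by H\"older, interpolation and (for the Hartree term) Hardy--Littlewood--Sobolev. The exponent bookkeeping you sketch (reciprocals summing to $\tfrac{n+4}{2(n+2)}$ and $\tfrac{n+2}{2n}$, with $\tfrac{4}{n}\le p\le\tfrac{4}{n-2}$, $2\le\gamma\le4$, $\gamma<n$ keeping everything admissible) checks out.
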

 \begin{proof}
Note that
\begin{eqnarray*}
\parallel |u|^pu\parallel_{\dot{N}^k(I\times\mathbb{R}^n)}&\lesssim&\parallel |\nabla|^k(|u|^pu)\parallel_{L_{t,x}^\frac{2(n+2)}{n+4}(I\times\mathbb{R}^n)},\\
\parallel(|x|^{-\gamma}\ast|u|^2)u\parallel_{\dot{N}^k(I\times\mathbb{R}^n)}&\lesssim&\parallel|\nabla|^k\left((|x|^{-\gamma}\ast|u|^2)u\right)\parallel_{L_t^2L_x^\frac{2n}{n+2}(I\times\mathbb{R}^n)}.
\end{eqnarray*}
Then using H\"older inequality and interpolation, one can get the
results.
 \end{proof}
 \section{Local Theory}
\ \quad Let's show the local theory for the initial value problem
\eqref{1}.  As the results are classical, we prefer to omit the
proofs and refer to \cite{16,19,20,22,23}.
\begin{proposition}\label{pro3.1}(Local well-posedness for \eqref{1} with
$H_x^1$-subcritical nonlinearities)\par Let $u_0\in H_x^1,
\lambda_1$ and $\lambda_2$ be nonzero real constants, with
$0<p<\frac{4}{n-2}, 0<\gamma<\min{\{n,4\}}$. Then, there exists
$T=T(\parallel u\parallel_{H_x^1})$ such that \eqref{1} with above
parameters admits a unique strong $H_x^1$-solution u on $[-T,T]$.
Let $(-T_{\mbox{min}}, T_{\mbox{max}})$ be the maximal time interval
on which the solution u is well-defined. For every compact time
interval $I\subset(-T_{\mbox{min}}, T_{\mbox{max}})$, we have $u\in
S^1(I\times\mathbb{R}^n)$ and the following properties hold:
\begin{itemize}
  \item If $T_{\mbox{max}}<\infty$(respectively,
  if $T_{\mbox{min}}<\infty$), then
  \[
  \parallel u(t)\parallel_{H_x^1}\rightarrow \infty\ \mbox{as}\ t\uparrow
  T_{\mbox{max}}\ (\mbox{respectively, as}\
  t\downarrow-T_{\mbox{min}}).
  \]
  \item The solution depends continuously on the initial value:\\
  There exists $T=T(\parallel u\parallel_{H_x^1})$ such that if $u_0^{(m)}\rightarrow
  u_0$ in $H_x^1$ and if $u^{(m)}$ is the solution to \eqref{1} with
  initial condition $u_0^{(m)}$, then $u^{(m)}$ is defined on $[-T,T]$
  for m sufficiently large and $u^{(m)}\rightarrow u$ in
  $S^1([-T,T]\times\mathbb{R}^n)$.
\end{itemize}
\end{proposition}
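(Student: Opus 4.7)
The plan is to run a standard Banach fixed-point argument on the Duhamel formula
\[
\Phi(u)(t) = e^{it\Delta}u_0 - i\int_0^t e^{i(t-s)\Delta}\bigl[\lambda_1|u|^p u + \lambda_2(|x|^{-\gamma}\ast|u|^2)u\bigr]\,ds,
\]
in a closed ball of radius $R\sim C\|u_0\|_{H_x^1}$ inside $S^1(I\times\mathbb{R}^n)$ with $I=[-T,T]$, and then choose $T$ small depending only on $\|u_0\|_{H_x^1}$. Since the exponents satisfy $p<\tfrac{4}{n-2}$ and $\gamma<\min\{n,4\}$, both time exponents $1-\tfrac{p(n-2)}{4}$ and $\alpha$ appearing in Lemma 2.5 are strictly positive, which is exactly the source of the smallness that will drive the contraction.

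First I would invoke the Strichartz estimate (Lemma 2.2) with $k=0,1$ to bound $\|\Phi(u)\|_{S^1(I\times\mathbb{R}^n)}$ by $\|u_0\|_{H_x^1}$ plus the $\dot N^0\cap\dot N^1$ norm of the full nonlinearity; the latter is controlled directly by Lemma 2.5, which gives
\[
\|\lambda_1|u|^pu+\lambda_2(|x|^{-\gamma}\ast|u|^2)u\|_{\dot N^k} \lesssim T^{1-\frac{p(n-2)}{4}}\|u\|^{p}_{\dot X^1}\|u\|_{\dot X^k} + T^{\alpha}\|u\|^2_{\dot Y^1}\|u\|_{\dot Y^k}.
\]
Using Lemma 2.1 together with the definitions of $\dot X^k$ and $\dot Y^k$ to dominate these norms by $\|u\|_{S^1}$, the right-hand side becomes $\lesssim (T^{1-\frac{p(n-2)}{4}}R^{p}+T^{\alpha}R^{2})R$. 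Taking $T$ small (depending on $R$, hence on $\|u_0\|_{H_x^1}$) forces this to be $\leq R/2$, so $\Phi$ stabilizes the ball. Applying the difference estimate in Lemma 2.5 the same way produces an analogous bound for $\|\Phi(u)-\Phi(v)\|_{\dot X^0\cap \dot Y^0}$ with a prefactor $T^{\min\{1-p(n-2)/4,\,\alpha\}}$, and shrinking $T$ once more makes $\Phi$ a strict contraction; Banach's theorem then supplies the unique local solution, and unconditional uniqueness in $S^1$ follows from a standard Gronwall-type argument.

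The blow-up alternative is automatic from the fact that the existence time depends only on $\|u(t)\|_{H_x^1}$: if $T_{\max}<\infty$ but $\limsup_{t\uparrow T_{\max}}\|u(t)\|_{H_x^1}<\infty$, one could restart the local theory at $t$ close to $T_{\max}$ and extend the solution past $T_{\max}$, a contradiction. Continuous dependence is obtained by the same contraction applied to $u^{(m)}-u$: writing their difference via Duhamel and applying Strichartz plus the Lipschitz estimate (2.5), one gets $\|u^{(m)}-u\|_{S^1([-T,T]\times\mathbb{R}^n)} \lesssim \|u_0^{(m)}-u_0\|_{H_x^1}$ with the implicit constant depending only on $\|u_0\|_{H_x^1}$ once $T$ is taken even smaller if necessary.

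The only technical point is the Hartree convolution, where one has to juggle Hardy--Littlewood--Sobolev with the right admissible pair so that the resulting time exponent is genuinely positive (this is where the constraint $\gamma<\min\{n,4\}$ enters and produces the exponent $\alpha>0$); however, since Lemmas 2.4 and 2.5 already deliver this with the exponents written down explicitly, the proof reduces to routine bookkeeping of Strichartz exponents and an invocation of the contraction mapping principle. Accordingly, I would mainly spell out the choice of ball, the admissible pair underlying $\dot X^1$ and $\dot Y^1$, and the selection of $T$; the rest is classical and can be cited from \cite{16,19,20,22,23} as the authors do.
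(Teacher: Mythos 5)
Your proposal is the standard Duhamel--Strichartz contraction argument, which is exactly the classical route the paper itself follows: the authors omit the proof and cite \cite{16,19,20,22,23}, and your use of Lemma 2.2 together with the time factors $|I|^{1-\frac{p(n-2)}{4}}$ and $|I|^{\alpha}$ from Lemma 2.5 (both strictly positive precisely because the nonlinearities are subcritical) is the right source of smallness, with the contraction run in the lower norm $\dot X^0\cap\dot Y^0$ on a ball of $S^1$, as is standard.

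The one point you should not gloss over is the continuous-dependence step. You assert $\parallel u^{(m)}-u\parallel_{S^1([-T,T]\times\mathbb{R}^n)}\lesssim\parallel u_0^{(m)}-u_0\parallel_{H_x^1}$, but the difference estimate in Lemma 2.5 is stated only at the $\dot N^0$ level, and for the power nonlinearity with $p<1$ the map $z\mapsto |z|^pz$ has a derivative that is merely H\"older continuous of order $p$, so no Lipschitz estimate at the $\dot N^1$ level is available; the contraction therefore yields Lipschitz dependence only in the $\dot S^0$-type norm. Convergence of $u^{(m)}$ to $u$ in $S^1$ (which is all the proposition claims) then requires the standard extra argument: combine the lower-norm convergence with the uniform $S^1$ bounds on the ball and either an interpolation/H\"older-type dependence estimate or the argument in Cazenave's book. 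This is a fixable overstatement rather than a wrong approach, but as written that sentence claims more than your contraction delivers.
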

\begin{proposition}(Local well-posedness for \eqref{1} with a
$H_x^1$-critical nonlinearity)\par Let $u_0\in H_x^1, \lambda_1$ and
$\lambda_2$ be nonzero real constants.
\begin{itemize}
\item when $p=\frac{4}{n-2}$, and $0<\gamma<\min{\{n,4\}}$, for every
$T>0$, there exists $\eta=\eta(T)$ such that if
\[
\parallel e^{it\Delta}u_0\parallel_{\dot{X}^1([-T,T])}\leq\eta,
\]
then \eqref{1} with the parameters given above admits a unique
strong $H_x^1$-solution u defined $[-T,T]$;
\item when $0<p<\frac{4}{n-2},\ \gamma=4$ and $n\geq 5$, for every
$T>0$, there exists $\eta=\eta(T)$ such that if
\[
\parallel e^{it\Delta}u_0\parallel_{\dot{Y}^1([-T,T])}\leq\eta,
\]
then \eqref{1} with the parameters given above admits a unique
strong $H_x^1$-solution u defined on $[-T,T]$;
\item Let $(-T_{\mbox{min}},
T_{\mbox{max}})$ be the maximal time interval on which the solution
u is well-defined. Then $u\in S^1(I\times\mathbb{R}^n)$ for each
compact time interval $I\subset(-T_{\mbox{min}}, T_{\mbox{max}})$
and the following blow
up alternative hold:\\
If $T_{\mbox{max}}<\infty$(respectively,
  if $T_{\mbox{min}}<\infty$), then \\
  either $\parallel u(t)\parallel_{H_x^1}\rightarrow
  \infty$ or $\parallel u(t)\parallel_{S^1\left((0,t)\times\mathbb{R}^n\right)}\rightarrow
  \infty$ as $t\uparrow
  T_{\mbox{max}}$ (respectively, as $t\downarrow-T_{\mbox{min}}$).
\end{itemize}
\end{proposition}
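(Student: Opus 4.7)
The plan is a Banach contraction for the Duhamel reformulation of \eqref{1}, carried out in a closed ball of $S^1([-T,T]\times\mathbb{R}^n)$ intersected with the function spaces $\dot{X}^1$ and $\dot{Y}^1$ introduced in Definition 2.2. Concretely, I consider the map $\Phi(u)(t) = e^{it\Delta}u_0 - i\int_0^t e^{i(t-s)\Delta}[\lambda_1|u|^p u + \lambda_2(|x|^{-\gamma}\ast|u|^2)u]\,ds$ and seek a fixed point in $B_{M_1,M_2}=\{u\in S^1:\|u\|_{\dot{X}^1}\leq M_1,\ \|u\|_{\dot{Y}^1}\leq M_2\}$, endowed with the weaker $\dot{S}^0$-metric so that the ambient space is complete, following the standard Kato--Cazenave recipe.

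Applying Strichartz (Lemma 2.2) together with the nonlinear bound (Lemma 2.5) yields $\|\Phi(u)\|_{S^1}\lesssim\|u_0\|_{H_x^1}+|I|^{1-p(n-2)/4}\|u\|_{\dot{X}^1}^{p+1}+|I|^{\alpha}\|u\|_{\dot{Y}^1}^{3}$ and the analogous Lipschitz estimate in the $\dot{S}^0$-metric. In case (i) ($p=\tfrac{4}{n-2}$, $\gamma<\min\{n,4\}$) the factor $|I|^{1-p(n-2)/4}=1$, so the critical power piece can be controlled only via smallness in $\dot{X}^1$: the hypothesis $\|e^{it\Delta}u_0\|_{\dot{X}^1([-T,T])}\leq\eta$ and Strichartz let me take $M_1=2\eta$ and absorb the power contribution as $C(2\eta)^{p+1}\leq\eta/2$. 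The Hartree piece is $H_x^1$-subcritical with $\alpha=1$ or $\alpha=2-\gamma/2>0$, so its coefficient $|I|^\alpha$ is small on short intervals. Case (ii) ($\gamma=4$, $p<\tfrac{4}{n-2}$, $n\geq5$) is structurally identical with the roles of $\dot{X}^1$ and $\dot{Y}^1$ exchanged.

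To accommodate the arbitrary prescribed $T$, I would split $[-T,T]$ into $N=N(T,\|u_0\|_{H_x^1})$ subintervals of length $\tau$ with $\tau^\alpha$ (respectively $\tau^{1-p(n-2)/4}$) chosen small enough, solving the contraction on each subinterval and gluing. On each piece, the relevant critical smallness is inherited from the global hypothesis through subadditivity of $\|e^{it\Delta}u_0\|_{\dot{X}^1}$ in the time interval combined with Strichartz applied to the Duhamel remainder (which is already small once the power term is controlled). Uniqueness and continuous dependence then come straight out of the Lipschitz half of Lemma 2.5 in the $\dot{S}^0$-metric.

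For the maximal interval and blow-up alternative, the existence of $(-T_{\min},T_{\max})$ as the union of all intervals of local existence is standard, and $u\in S^1(I\times\mathbb{R}^n)$ on every compact $I\subset(-T_{\min},T_{\max})$ follows by applying the local theory at interior times. If $T_{\max}<\infty$ and both $\|u(t)\|_{H_x^1}$ and $\|u\|_{S^1((0,t)\times\mathbb{R}^n)}$ remained bounded as $t\uparrow T_{\max}$, then $\|e^{i(t-t_0)\Delta}u(t_0)\|_{\dot{X}^1}$ (respectively $\dot{Y}^1$) would be small on a short interval past $T_{\max}$ by absolute continuity of those norms in time against a bounded majorant, so the local theory would extend $u$ beyond $T_{\max}$, contradicting maximality. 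The main obstacle throughout is the dual role of the smallness assumption: the hypothesis controls the critical piece, whereas the subcritical piece requires smallness supplied by time localization, and reconciling the prescribed $T$ with this time smallness is precisely what forces the subinterval iteration above.
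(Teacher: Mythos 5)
Your overall framework (Duhamel fixed point in a ball of $S^1$ with the $\dot{X}^1$, $\dot{Y}^1$ norms, closed under the $\dot{S}^0$-metric, using Strichartz and Lemma 2.5) is exactly the classical Kato--Cazenave route; note the paper itself does not prove this proposition but refers it to the classical references, so the comparison is with that standard argument. On a single interval your estimates are fine: with $p=\frac{4}{n-2}$ the power term carries no time factor and is absorbed by the smallness $M_1=2\eta$, while the Hartree term carries $|I|^{\alpha}$ with $\alpha>0$.

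The genuine gap is in the device you use to reach an arbitrary prescribed $T$: the subinterval splitting with the claim that the critical smallness is ``inherited from the global hypothesis through subadditivity \ldots combined with Strichartz applied to the Duhamel remainder (which is already small once the power term is controlled).'' On a later subinterval $I_j=[t_j,t_{j+1}]$ the relevant linear flow is $e^{i(t-t_j)\Delta}u(t_j)=e^{it\Delta}u_0-i\int_0^{t_j}e^{i(t-s)\Delta}F(u)\,ds$, and the second term must be estimated by $\|F(u)\|_{\dot{N}^1([0,t_j])}$, which contains the accumulated Hartree contributions $\sum_{l<j}\tau^{\alpha}M_{2,l}^3$. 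Since the $\dot{Y}^1$-level bound $M_{2,l}$ is at the scale of $\|u_0\|_{H^1}$ (and may grow by a fixed factor at each step, making the choice of $\tau$ circular), this sum is of order $T\tau^{\alpha-1}\|u_0\|_{H^1}^3\gtrsim T^{\alpha}\|u_0\|_{H^1}^3$ for $\alpha\le 1$, and it does not become small as $\tau\to 0$; likewise the accumulated critical contributions are of size $N\eta^{p+1}$ with $N\sim T/\tau$ depending on $\eta$ through $\tau$. So the smallness of $\|e^{i(t-t_j)\Delta}u(t_j)\|_{\dot{X}^1(I_j)}$ is not established, and the gluing does not close: critical smallness cannot be propagated forward through Duhamel in a local existence proof (that mechanism belongs to long-time perturbation arguments, where one already has a spacetime bound on an approximate solution). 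The classical proof is a single contraction on all of $[-T,T]$, in which the Hartree contribution $CT^{\alpha}M_2^{3}$ is absorbed because the threshold is allowed to depend on $T$ \emph{and} on the $H^1$ size of the datum (this is also how the cited results are stated; the paper's ``$\eta=\eta(T)$'' should be read with that implicit dependence). Your treatment of the maximal interval and blow-up alternative is acceptable in outline, but it too relies on the local statement whose proof is the step at issue.
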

\ \quad Next, we will establish the stability results for the
$H_x^1$-critical and the $L_x^2$-critical $\textsl{NLS}$ with
Hartree type.
\begin{lemma}(Short-time perturbation)\par
Let I be a compact interval, and let $\tilde{u}$ be a function on
$I\times\mathbb{R}^n$ which is a near-solution to \eqref{2} in the
sense that
\[
(i\partial_t+\Delta)\tilde{u}=\lambda(|x|^{-4}\ast|\tilde{u}|^2)\tilde{u}+e
\]for some function $e$.
Suppose that we have the energy bound
\begin{eqnarray}
\parallel \tilde{u}\parallel_{L_t^\infty
\dot{H}^1(I\times\mathbb{R}^n)}\leq E\end{eqnarray}for some $E>0$.

Let $t_0\in I$, and let $u(t_0)$ be close to $\tilde{u}(t_0)$ in the
sense that
\begin{eqnarray}
\parallel u(t_0)-\tilde{u}(t_0)\parallel_{\dot{H}_x^1}\leq E',
\end{eqnarray}
for some $E'>0$. Assume also that we have the smallness conditions
\begin{eqnarray}
\parallel
\nabla\tilde{u}\parallel_{U(I)}&\leq&\epsilon_0,\label{3.3}\\
\parallel e^{i(t-t_0)\Delta}\nabla
(u(t_0)-\tilde{u}(t_0))\parallel_{U(I)}&\leq&\epsilon,\label{3.4}\\
\parallel
e\parallel_{\dot{N}^1(I\times\mathbb{R}^n)}&\leq&\epsilon,\label{3.5}\end{eqnarray}for
some $0<\epsilon<\epsilon_0$, where $\epsilon_0$ is a small constant
$\epsilon_0=\epsilon_0(E,E')>0$.

We conclude that there exists a solution u to \eqref{2} on
$I\times\mathbb{R}^n$ with the special initial datum $u(t_0)$ at
$t_0$, and furthermore,
\begin{eqnarray}
\parallel
u-\tilde{u}\parallel_{\dot{S}^1(I\times\mathbb{R}^n)}\lesssim
E'+\epsilon,\\
\parallel
u\parallel_{\dot{S}^1(I\times\mathbb{R}^n)}\lesssim E'+E,\\
\parallel
u-\tilde{u}\parallel_{L_t^6L_x^{\frac{6n}{3n-8}}(I\times\mathbb{R}^n)}\lesssim
\epsilon,\\
\parallel(i\partial_t+\Delta)(
u-\tilde{u})\parallel_{\dot{N}^1(I\times\mathbb{R}^n)}\lesssim\epsilon.
\end{eqnarray}
\end{lemma}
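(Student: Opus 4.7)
The plan is to set $v := u - \tilde{u}$ and construct $v$ by solving the inhomogeneous Schr\"odinger equation it satisfies. Writing $F(\phi) := \lambda(|x|^{-4}\ast|\phi|^2)\phi$, the function $v$ formally solves
\[
iv_t + \Delta v = F(\tilde{u}+v) - F(\tilde{u}) - e, \qquad v(t_0) = u(t_0) - \tilde{u}(t_0).
\]
I would build $v$ by Picard iteration in $\dot{S}^1(I\times\mathbb{R}^n)$, using Strichartz (Lemma 2.2) together with the energy-critical Hartree estimates of Lemma \ref{lemma} and Remark \ref{remark2.1} to close a bootstrap whose bootstrap quantity is $\parallel \nabla v\parallel_{U(I)}$. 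The three smallness hypotheses feed in directly: \eqref{3.3} ensures that $\tilde{u}$ contributes only small factors in the nonlinear difference, \eqref{3.4} supplies the initial value for the bootstrap quantity, and \eqref{3.5} feeds the forcing term.

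The key nonlinear step is to expand $F(\tilde{u}+v) - F(\tilde{u})$ as a sum of (at most) five terms of the schematic form $(|x|^{-4}\ast(\phi_1\phi_2))\phi_3$ where the unordered triple $(\phi_1,\phi_2,\phi_3)$ uses $v$ at least once and $\tilde{u}$ otherwise (possibly with conjugates). Each such term is estimated in $\dot{N}^1(I\times\mathbb{R}^n)$ by the mixed-nonlinearity bound (2.12) of Remark \ref{remark2.1}, so every summand factors as the product of a small $U(I)$-type quantity (taken from either $\tilde{u}$ or $v$) and $\dot{S}^1$ norms. After applying Strichartz to $v$ and inserting $\parallel\nabla\tilde{u}\parallel_{U(I)}\le\epsilon_0$ and \eqref{3.4}, this yields a schematic inequality
\[
\parallel\nabla v\parallel_{U(I)} \;\le\; C\epsilon \;+\; C(E,E')\bigl(\epsilon_0^2 + \parallel\nabla v\parallel_{U(I)}^{2}\bigr)\bigl(\parallel v\parallel_{\dot{S}^1(I)} + E\bigr),
\]
together with an analogous inequality for $\parallel v\parallel_{\dot{S}^1(I)}$ itself.

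Choosing $\epsilon_0 = \epsilon_0(E,E')$ small enough, a standard continuity argument starting from the $t=t_0$ bound provided by \eqref{3.4} closes the bootstrap on the whole of $I$ and gives $\parallel v\parallel_{\dot{S}^1(I)} \lesssim E' + \epsilon$. The remaining four assertions of the lemma then follow with little extra work: the $\dot{S}^1$ bound on $u$ comes from the triangle inequality combined with a separate Strichartz estimate for $\tilde{u}$ alone (using \eqref{3.3} and Lemma \ref{lemma}); the $L^6_t L^{6n/(3n-8)}_x$ bound on $v$ is obtained by interpolating $\dot{S}^1$-control against the small $U(I)$ norm; and the $\dot{N}^1$ bound on $(i\partial_t+\Delta)(u-\tilde u)$ is simply the nonlinear estimate already used inside the bootstrap, plus \eqref{3.5}.

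The main obstacle is the nonlocality of the Hartree term: there is no pointwise inequality of the form $|F(u) - F(\tilde u)| \lesssim (|u|^{2}+|\tilde u|^{2})|v|$, so one cannot mimic the standard pure-power perturbation lemma. Instead one must distribute the two factors inside the convolution and, for each of the five resulting terms, choose on which factor the gradient falls and which factor is measured in $U(I)$ versus $\dot{S}^1$, so that every summand carries at least one genuinely small factor. This bookkeeping is precisely why the mixed-nonlinearity statement of Remark \ref{remark2.1} is needed rather than a direct application of Lemma \ref{lemma}; once the right distribution of norms is in place, the contraction and continuity arguments are entirely routine.
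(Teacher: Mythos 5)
Your overall skeleton (set $v=u-\tilde u$, bootstrap on $\|\nabla v\|_{U(I)}$, Strichartz, continuity argument) matches the paper, but the key nonlinear estimate you propose to feed the bootstrap is the wrong one, and this is not a cosmetic difference. You invoke Remark \ref{remark2.1}, whose estimates have the schematic form $\|(|x|^{-\gamma}\ast|u|^2)v\|_{\dot N^0}\lesssim \|u\|_{Z(I)}^{\rho}\|u\|_{S^1}^{2-\rho}\|v\|_{S^1}$: the small factor they extract is a $Z(I)$-norm raised to a fractional power. But the hypotheses of this lemma give smallness only in $\|\nabla\tilde u\|_{U(I)}$, $\|e^{i(t-t_0)\Delta}\nabla(u(t_0)-\tilde u(t_0))\|_{U(I)}$ and $\|e\|_{\dot N^1}$; there is no hypothesis controlling $\|\tilde u\|_{Z(I)}$ or $\|v\|_{Z(I)}$, and one cannot deduce $Z$-smallness from $U$-smallness of the gradient plus an energy bound. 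So the bootstrap you write down has a genuinely uncontrolled factor sitting in it.

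What the paper does instead is derive, in-line, the clean trilinear bound
\[
\|(|x|^{-4}\ast(ab))c\|_{\dot N^1(I\times\mathbb{R}^n)}\lesssim \|\nabla a\|_{U(I)}\|\nabla b\|_{U(I)}\|\nabla c\|_{U(I)},
\]
which puts every factor in exactly the $\nabla U(I)$ norm that the hypotheses make small. Expanding $F(\tilde u+z)-F(\tilde u)$ then gives a polynomial in $\|\nabla z\|_{U(I)}$ with coefficients powers of $\epsilon_0$, and the continuity argument closes at $S(t)\lesssim\epsilon$, hence $\|\nabla z\|_{U(I)}\lesssim\epsilon$, which is what the third conclusion $\|u-\tilde u\|_{L^6_tL^{6n/(3n-8)}_x}\lesssim\epsilon$ requires via Sobolev embedding. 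Your schematic inequality, by contrast, contains a factor $E$ multiplying $\epsilon_0^2$ (coming from the $\dot S^1$ factors that Remark \ref{remark2.1} would force you to carry), so even if everything else were in place, you would only get $\|\nabla v\|_{U(I)}\lesssim\epsilon+E\epsilon_0^2$, which is not $\lesssim\epsilon$ since $\epsilon$ may be far smaller than $\epsilon_0^2$. The fix is simply to replace your use of Remark \ref{remark2.1} by the Hölder plus Hardy--Littlewood--Sobolev trilinear estimate above, which is tailored to the $U(I)$-based hypotheses; after that substitution the rest of your outline goes through.
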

\begin{proof}
Without loss of generality, we assume $t_0=\inf I$. Define
$z=u-\tilde{u}$, then $u=z+\tilde{u}$
\[
S(t):=\parallel(i\partial_t+\Delta)z\parallel_{\dot{N}^1([t_0,t]\times\mathbb{R}^n)}.
\]
By using H\"older, Hardy-Littlewood-Sobelov inequality, we have
\begin{eqnarray}\label{3.10}
\parallel\left(|x|^{-4}\ast(ab)\right)c\parallel_{\dot{N}^1}
&\lesssim&\parallel\nabla\left[\left(|x|^{-4}\ast(ab)\right)c\right]\parallel_{L_t^2L_x^{\frac{2n}{n+2}}}\nonumber\\
&\lesssim&
\parallel
\nabla a\parallel_{U(I)}\parallel \nabla b\parallel_{U(I)}\parallel
\nabla c\parallel_{U(I)},
\end{eqnarray}
and  from \eqref{3.3},\eqref{3.5} and \eqref{3.10}, we have
\begin{eqnarray}
S(t)&\leq&\parallel\left[|x|^{-4}\ast(|z|^2+z\bar{\tilde{u}}+\bar{z}\tilde{u})\right](z+\tilde{u})\parallel_{\dot{N}^1}+\parallel(|x|^{-4}\ast|\tilde{u}|^2)z\parallel_{\dot{N}^1}+\parallel
e\parallel_{\dot{N}^1}\nonumber\\
&\lesssim&\epsilon+\sum\limits_{j=0}^2\parallel\nabla
z\parallel_{U(I)}^j\parallel \nabla
\tilde{u}\parallel_{U(I)}^{3-j}\nonumber\\
&\lesssim&\epsilon+\sum\limits_{j=0}^2\epsilon_0^{3-j}\parallel\nabla
z\parallel_{U(I)}^j.\nonumber
\end{eqnarray}
On the other hand, one has
\begin{eqnarray}\label{3.11}
\parallel\nabla
z\parallel_{U(I)}\lesssim\parallel
e^{\left(i(t-t_0)\Delta\right)}\nabla
z(t_0)\parallel_{U(I)}+S(t)\lesssim S(t)+\epsilon,
\end{eqnarray}
and
\[
S(t)\lesssim\epsilon+\sum\limits_{j=0}^2\epsilon_0^{3-j}(S(t)+\epsilon)^j
\]
By a standard continunity method, one can show that
$S(t)\lesssim\epsilon$, then from \eqref{3.11} and Sobolev
embedding, we get
\begin{eqnarray*}
\parallel
u-\tilde{u}\parallel_{L_t^6L_x^{\frac{6n}{3n-8}}}&\lesssim&\epsilon\\
\parallel \tilde{u}\parallel_{\dot{S}^1}&\lesssim&\parallel
\tilde{u}(t_0)\parallel_{\dot{H}^1}+\parallel \nabla
\tilde{u}\parallel_{U(I)}^3+\parallel
e\parallel_{\dot{N}^1}\lesssim E+\epsilon_0^3+\epsilon\lesssim E\\
\parallel u-\tilde{u}\parallel_{\dot{S}^1}&\lesssim&\parallel
u(t_0)-\tilde{u}(t_0)\parallel_{\dot{H}_x^1}+S(t)\lesssim
E'+\epsilon.
\end{eqnarray*}
At last, we have
\[
\parallel
u\parallel_{\dot{S}^1}\lesssim\parallel
u-\tilde{u}\parallel_{\dot{S}^1}+\parallel
\tilde{u}\parallel_{\dot{S}^1}\lesssim E+E'.
\]
\end{proof}
\begin{remark}
If $\parallel
u(t_0)-\tilde{u}(t_0)\parallel_{\dot{H}_x^1}\leq\epsilon_0$, then,
thanks to the Strichartz estimate, we have
\[
\parallel e^{i(t-t_0)\Delta}\nabla
(u(t_0)-\tilde{u}(t_0))\parallel_{U(I)}\lesssim\parallel
u(t_0)-\tilde{u}(t_0)\parallel_{\dot{H}_x^1}\leq\epsilon_0.
\]
Therefore, if $E'$ is small, then \eqref{3.4} obviously holds true.
\end{remark}
\begin{lemma}\label{lemma3.2}($H_x^1$-critical stability result for Hartree type)\par
Let I be a compact interval, $t_0\in I$,  $\tilde{u}$ be a function
on $I\times\mathbb{R}^n$ which is a near-solution to \eqref{2} in
the sense that
\[
(i\partial_t+\Delta)\tilde{u}=\lambda(|x|^{-4}\ast|\tilde{u}|^2)\tilde{u}+e\qquad\mbox{for
some function e},
\]
and $u(t_0)$ be close to $\tilde{u}(t_0)$ in the sense that
\begin{eqnarray}
\parallel u(t_0)-\tilde{u}(t_0)\parallel_{\dot{H}_x^1}\leq E'\qquad\mbox{for some
}E'>0.
\end{eqnarray}
Suppose that we  have the energy bound
\begin{eqnarray}
\parallel \tilde{u}\parallel_{L_t^\infty
\dot{H}^1(I\times\mathbb{R}^n)}\leq E\qquad\mbox{for some }E>0,
\end{eqnarray}
and we also  have the following conditions
\begin{eqnarray}
\parallel
\nabla\tilde{u}\parallel_{U(I)}&\leq& M\qquad\mbox{for some }M>0,\\
\parallel e^{i(t-t_0)\Delta}\nabla
(u(t_0)-\tilde{u}(t_0))\parallel_{U(I)}&\leq&\epsilon,\label{3.15}\\
\parallel
e\parallel_{\dot{N}^1(I\times\mathbb{R}^n)}&\leq&\epsilon\end{eqnarray}
for some $0<\epsilon<\epsilon_0$, where $\epsilon_0$ is a small
constant $\epsilon_0=\epsilon_0(E,E',M)>0$.

Then, there exists a solution u to \eqref{2} on
$I\times\mathbb{R}^n$ with the special initial datum $u(t_0)$ at
$t_0$, satisfying
\begin{eqnarray}
\parallel
u-\tilde{u}\parallel_{\dot{S}^1(I\times\mathbb{R}^n)}&\lesssim&
C(M,E)(E'+\epsilon),\\
\parallel
u\parallel_{\dot{S}^1(I\times\mathbb{R}^n)}&\lesssim& C(M,E',E),\\
\parallel
u-\tilde{u}\parallel_{L_t^6L_x^{\frac{6n}{3n-8}}(I\times\mathbb{R}^n)}&\lesssim&
C(M,E,E')\epsilon.
\end{eqnarray}
\end{lemma}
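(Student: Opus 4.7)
The plan is to iterate the short-time perturbation Lemma 3.1 on a finite partition of $I$. The only hypothesis of Lemma 3.1 that is not directly available here is the smallness $\|\nabla \tilde{u}\|_{U(I)} \leq \epsilon_0$; we are only given $\|\nabla \tilde{u}\|_{U(I)} \leq M$. Since the $U$-norm of $\nabla \tilde u$ is absolutely continuous with respect to time, one can partition $I = \bigcup_{j=0}^{J-1}[t_j,t_{j+1}]$ into $J = J(M,\epsilon_0)$ consecutive subintervals $I_j$ on each of which $\|\nabla \tilde u\|_{U(I_j)} \leq \epsilon_0$, where $\epsilon_0 = \epsilon_0(E,E',M)$ is to be chosen small at the end.

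On each $I_j$, I would apply Lemma 3.1 with updated parameters $E_j, E'_j, \epsilon_j$. The energy bound $E_j = E$ persists on all of $I$. For the other two parameters, the aim is an inductive bound of the form $E'_{j+1} + \epsilon_{j+1} \leq C(E)(E'_j + \epsilon_j)$. The increment on $E'_j$ comes directly from the short-time conclusion $\|u - \tilde u\|_{\dot S^1(I_j \times \mathbb R^n)} \lesssim E'_j + \epsilon_j$ which, evaluated at the endpoint, gives $\|u(t_{j+1}) - \tilde u(t_{j+1})\|_{\dot H^1_x} \lesssim E'_j + \epsilon_j$. The increment on $\epsilon_j$ requires re-verifying \eqref{3.15} at the new initial time: by Strichartz applied to $z = u-\tilde u$ on $[t_j,t_{j+1}]$ and the short-time estimate on $\|(i\partial_t+\Delta)z\|_{\dot N^1(I_j)} \lesssim \epsilon_j$ proven inside Lemma 3.1, the free evolution of $\nabla z(t_{j+1})$ in $U(I_{j+1})$ is dominated by $E'_j + \epsilon_j$ up to a universal constant.

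Iterating $J$ times yields $E'_j + \epsilon_j \leq (2C(E))^j(E' + \epsilon)$, so the induction closes provided $\epsilon_0$ is chosen small enough (depending on $E, E', M$ via $J$) that the worst-case $E'_{J-1} + \epsilon_{J-1}$ stays below the short-time threshold of Lemma 3.1 at every step. Summing the $\dot S^1$-estimates over the subintervals then produces
\begin{align*}
\|u - \tilde u\|_{\dot S^1(I \times \mathbb R^n)} &\lesssim \sum_{j=0}^{J-1}(E'_j + \epsilon_j) \lesssim C(M,E)(E' + \epsilon),\\
\|u - \tilde u\|_{L^6_tL^{6n/(3n-8)}_x(I\times\mathbb R^n)} &\lesssim C(M,E,E')\epsilon,
\end{align*}
while $\|u\|_{\dot S^1} \leq \|u - \tilde u\|_{\dot S^1} + \|\tilde u\|_{\dot S^1} \lesssim C(M,E,E')$ by Strichartz applied to $\tilde u$ itself.

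The main technical obstacle is precisely the propagation of the smallness condition \eqref{3.15}: unlike $\|u(t_{j+1})-\tilde u(t_{j+1})\|_{\dot H^1_x}$, the free-evolution $U$-norm is not automatically controlled from the previous step, and must be recovered at each $t_{j+1}$ through Strichartz applied to the full error equation on $[t_0,t_{j+1}]$. The exponential-in-$J$ accumulation of constants this produces is what forces $\epsilon_0$ (hence $J$, hence the overall constants) to depend on the a priori bound $M$, which is how $M$ enters the final constant $C(M,E)$. The cubic structure of the Hartree nonlinearity in $U$-norm, as captured by the estimate \eqref{3.10} used in the proof of Lemma 3.1, is what makes this iteration go through; no new nonlinear estimate beyond those already used for the short-time result is required.
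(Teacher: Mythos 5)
Your overall strategy matches the paper's: subdivide $I$ into $J=J(M,\epsilon_0)$ pieces on which $\|\nabla\tilde u\|_{U(I_j)}\le\epsilon_0$, iterate the short-time perturbation Lemma 3.1, and sum. The inductive update of $E'_j$ via $\|u(t_{j+1})-\tilde u(t_{j+1})\|_{\dot H^1_x}\lesssim E'_j+\epsilon_j$ is exactly what the paper does. The gap lies in how you propagate the free-evolution condition \eqref{3.15}.

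You argue that the free evolution of $\nabla z(t_{j+1})$ in $U(I_{j+1})$ is ``dominated by $E'_j+\epsilon_j$,'' which is what one gets from the coarse bound
\[
\parallel e^{i(t-t_{j+1})\Delta}\nabla z(t_{j+1})\parallel_{U(I_{j+1})}\lesssim\parallel\nabla z(t_{j+1})\parallel_{L^2_x}\lesssim E'_j+\epsilon_j.
\]
This bound is true but too lossy: it injects $E'$ into $\epsilon_{j}$, producing $\epsilon_j\lesssim (2C)^j(E'+\epsilon)$. With this scheme the short-time lemma's third output gives $\parallel u-\tilde u\parallel_{L^6_tL^{6n/(3n-8)}_x(I_j)}\lesssim\epsilon_j\sim(2C)^j(E'+\epsilon)$, and summing yields only $\lesssim C(M,E)(E'+\epsilon)$ --- \emph{not} the stated conclusion $\parallel u-\tilde u\parallel_{L^6_tL^{6n/(3n-8)}_x(I)}\lesssim C(M,E,E')\,\epsilon$. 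That $\epsilon$-only bound (with no $E'$ on the right) is precisely the nontrivial content of the third estimate, and your inductive scheme cannot recover it.

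The fix, and what the paper actually does, is to use the inhomogeneous Duhamel decomposition back to $t_0$:
\[
e^{i(t-t_{j+1})\Delta}z(t_{j+1})=e^{i(t-t_0)\Delta}z(t_0)-i\int_{t_0}^{t_{j+1}}e^{i(t-s)\Delta}(i\partial_s+\Delta)z(s)\,ds,
\]
so that by Strichartz
\[
\parallel e^{i(t-t_{j+1})\Delta}\nabla z(t_{j+1})\parallel_{U(I_{j+1})}\lesssim\parallel e^{i(t-t_0)\Delta}\nabla z(t_0)\parallel_{U(I)}+\sum_{l=0}^{j}\parallel(i\partial_t+\Delta)z\parallel_{\dot N^1(I_l)}.
\]
The first term is $\le\epsilon$ by hypothesis \eqref{3.15}, and each summand is $\lesssim\epsilon_l$ by the \emph{fourth} output of the short-time lemma, $\parallel(i\partial_t+\Delta)(u-\tilde u)\parallel_{\dot N^1(I_l)}\lesssim\epsilon_l$. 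This keeps $\epsilon_{j+1}$ at scale $\epsilon$ (with a constant that grows only with $j\le J$), which is what makes the third conclusion possible. Note that this fourth estimate in Lemma 3.1 exists exactly for this bookkeeping purpose; your write-up cites the $\dot N^1$ bound but then replaces the Duhamel splitting by the $L^2$-trace bound, which throws away the gain.
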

\begin{proof}
Without loss of generality, we assume $t_0=\inf I$. Split $I$ into J
intervals $I_j$, such that on each $I_j$ we have
\[
\parallel
\nabla\tilde{u}\parallel_{U(I_j)}\leq\epsilon_0,\qquad\qquad\mbox{then
}J\sim\left(1+\frac{M}{\epsilon_0}\right)^6.
\]
Fix $I_0=[t_0,t_1]$, thanks to the short-time perturbation, one can
get
\begin{eqnarray*}
\parallel
u-\tilde{u}\parallel_{\dot{S}^1(I_0\times\mathbb{R}^n)}&\lesssim&
E'+\epsilon,\\
\parallel
u\parallel_{\dot{S}^1(I_0\times\mathbb{R}^n)}&\lesssim& E'+E,\\
\parallel
u-\tilde{u}\parallel_{L_t^6L_x^{\frac{6n}{3n-8}}(I_0\times\mathbb{R}^n)}&\lesssim&
\epsilon,\\
\parallel(i\partial_t+\Delta)(
u-\tilde{u})\parallel_{\dot{N}^1(I_0\times\mathbb{R}^n)}&\lesssim&\epsilon.
\end{eqnarray*}
Furthermore, we have\begin{equation*} \parallel
u(t_1)-\tilde{u}(t_1)\parallel_{\dot{H}_x^1}\le\parallel
u-\tilde{u}\parallel_{\dot{S}_x^1(I_0\times\mathbb{R}^n)}\lesssim
E'+\epsilon
\end{equation*}and
\begin{eqnarray*}
\parallel e^{i(t-t_1)\Delta}\nabla
(u(t_1)-\tilde{u}(t_1))\parallel_{U(I_1)}&\lesssim&\parallel
e^{i(t-t_0)\Delta}\nabla
(u(t_0)-\tilde{u}(t_0))\parallel_{U(I_1)}\\
+\parallel(i\partial_t+\Delta)(u-\tilde{u})\parallel_{N^1(I_0\times\mathbb{R}^n)}&\lesssim&\epsilon.
\end{eqnarray*}
Choosing $\epsilon$ small enough, from the short-time perturbation,
we have the results also hold on $I_1$, continuing the inductive
argument, we get the above results at last.
\end{proof}
\begin{remark}
In our lemmas, the condition \eqref{3.15} is weeker than the
condition of what stated in \cite{3}, where they require that
\begin{eqnarray*}
&&\left(\sum\limits_N\parallel P_N\nabla
e^{\left(i(t-t_0)\Delta\right)}(u(t_0)-\tilde{u}(t_0))\parallel_{U(I)}^2\right)^\frac{1}{2}\\
&&+\left(\sum\limits_N\parallel P_N\nabla
e^{\left(i(t-t_0)\Delta\right)}(u(t_0)-\tilde{u}(t_0))\parallel_{L_t^3L_x^{\frac{6n}{3n-4}}(I\times\mathbb{R}^n)}^2\right)^\frac{1}{2}
\leq\epsilon
\end{eqnarray*}
In fact, for Hartree type the nonlinearity and derivatives of the
nonlinearity are Lipschitz continuity.
\end{remark}
\ \quad The same method can be used to prove the perturbation theory
of the $L_x^2$-critical $\textsl{NLS}$ with Hartree type. Note that,
by H\"older, Hardy-Littlewood-Sobolev inequality, we have
\begin{eqnarray}\label{3.20}
\parallel\left(|x|^{-2}\ast(ab)\right)c\parallel_{\dot{N}^0}
&\lesssim&\parallel\left(|x|^{-2}\ast(ab)\right)c\parallel_{L_t^2L_x^{\frac{2n}{n+2}}}\nonumber\\
&\lesssim&
\parallel
a\parallel_{U(I)}\parallel b\parallel_{U(I)}\parallel
c\parallel_{U(I)},
\end{eqnarray}
\eqref{3.20} instead of \eqref{3.10}, by using a similar argument as
above,   we can get the following result:
\begin{lemma}\label{lemma3.3}($L_x^2$-critical stability result for Hartree type)\par
Let $I$ be a compact interval, $t_0\in I$,  $\tilde{u}$ be a
function on $I\times\mathbb{R}^n$ which is a near-solution to
\eqref{3} in the sense that
\[
(i\partial_t+\Delta)\tilde{u}=\lambda(|x|^{-2}\ast|\tilde{u}|^2)\tilde{u}+e\qquad\mbox{for
some function e},
\] and $u(t_0)$ be close to $\tilde{u}(t_0)$ in the sense that
\begin{eqnarray}
\parallel u(t_0)-\tilde{u}(t_0)\parallel_{L_x^2(\mathbb{R}^n)}\leq M'\qquad\mbox{for some
}M'>0.
\end{eqnarray}
Suppose that we  have the mass bound
\begin{eqnarray}
\parallel \tilde{u}\parallel_{L_t^\infty
L_x^2(I\times\mathbb{R}^n)}\leq M\qquad\mbox{for some M}>0
\end{eqnarray}
and the following conditions hold true
\begin{eqnarray}
\parallel
\tilde{u}\parallel_{U(I)}&\leq& L\qquad\mbox{for some }L>0\\
\parallel e^{i(t-t_0)\Delta}
(u(t_0)-\tilde{u}(t_0))\parallel_{U(I)}&\leq&\epsilon\label{3.24}\\
\parallel
e\parallel_{\dot{N}^0(I\times\mathbb{R}^n)}&\leq&\epsilon\end{eqnarray}
for some $0<\epsilon<\epsilon_1$, where $\epsilon_1$ is a small
constant, $\epsilon_1=\epsilon_1(M,M',L)>0$.

Then, there exists a solution u to \eqref{3} on
$I\times\mathbb{R}^n$ with the special initial datum $u(t_0)$ at
$t_0$, and
\begin{eqnarray}
\parallel
u-\tilde{u}\parallel_{\dot{S}^0(I\times\mathbb{R}^n)}&\lesssim&
C(L,M,M')(M'+\epsilon),\\
\parallel
u\parallel_{\dot{S}^0(I\times\mathbb{R}^n)}&\lesssim& C(L,M,M'),\\
\parallel
u-\tilde{u}\parallel_{U(I)}&\lesssim& C(L,M,M')\epsilon.
\end{eqnarray}

\end{lemma}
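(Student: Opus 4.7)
\textbf{Proof proposal for Lemma \ref{lemma3.3}.}

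The plan is to mirror the two-step strategy of Lemma \ref{lemma3.2}, but at the $L^2$-level: replace the $\dot H^1$-trilinear Hartree estimate \eqref{3.10} by its $L^2$-analog \eqref{3.20}, and work throughout in $\dot S^0$, $\dot N^0$ and $U(I)$ rather than in $\dot S^1$, $\dot N^1$. First I establish a short-time version of the lemma under the extra smallness hypothesis $\|\tilde u\|_{U(I)}\le \epsilon_0$ (no derivatives), where $\epsilon_0=\epsilon_0(M,M')$; then I iterate it by slicing $I$ into finitely many subintervals on which $\|\tilde u\|_{U(I_j)}\le\epsilon_0$.

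\textbf{Short-time step.} Assume without loss of generality $t_0=\inf I$ and set $z=u-\tilde u$. Define
\[
S(t):=\big\|(i\partial_t+\Delta)z\big\|_{\dot N^0([t_0,t]\times\mathbb{R}^n)}.
\]
Expanding
\[
(|x|^{-2}\ast|u|^2)u-(|x|^{-2}\ast|\tilde u|^2)\tilde u
\]
into its seven trilinear pieces in $\tilde u$ and $z$ and applying \eqref{3.20} termwise yields
\[
S(t)\le \epsilon+C\sum_{j=0}^{2}\|\tilde u\|_{U(I)}^{2-j}\,\|z\|_{U([t_0,t])}^{\,j+1}\le \epsilon+C\sum_{j=0}^{2}\epsilon_0^{\,2-j}\,\|z\|_{U([t_0,t])}^{\,j+1}.
\]
On the other hand, Strichartz combined with \eqref{3.24} gives
\[
\|z\|_{U([t_0,t])}\lesssim \|e^{i(t-t_0)\Delta}z(t_0)\|_{U([t_0,t])}+S(t)\lesssim \epsilon+S(t),
\]
so that $S(t)\le C\epsilon+C\sum_{j=0}^{2}\epsilon_0^{2-j}(\epsilon+S(t))^{j+1}$. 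A standard continuity argument, with $\epsilon_0$ chosen small enough, closes this to $S(t)\lesssim \epsilon$ and therefore $\|z\|_{U(I)}\lesssim \epsilon$. Strichartz then gives $\|z\|_{\dot S^0(I)}\lesssim M'+\epsilon$ and $\|u\|_{\dot S^0(I)}\lesssim M+M'+\epsilon$, which are exactly the conclusions of the lemma in this small-$\tilde u$ regime.

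\textbf{Iteration step.} For the full hypothesis $\|\tilde u\|_{U(I)}\le L$, partition $I=\bigcup_{j=0}^{J-1}I_j$ with $I_j=[t_j,t_{j+1}]$ so that $\|\tilde u\|_{U(I_j)}\le \epsilon_0$ on each piece; this requires $J\sim (1+L/\epsilon_0)^{6}$ since $U=L^{6}_tL^{6n/(3n-2)}_x$. The short-time step applies on $I_0$. At $t_1$, the new initial error and dispersive smallness propagate as
\[
\|u(t_1)-\tilde u(t_1)\|_{L^2_x}\lesssim M'+\epsilon,\qquad \|e^{i(t-t_1)\Delta}z(t_1)\|_{U(I_1)}\lesssim \|e^{i(t-t_0)\Delta}z(t_0)\|_{U(I_1)}+S(I_0)\lesssim \epsilon,
\]
so the short-time step applies again on $I_1$, and so on. After $J$ iterations, all constants remain of the form $C(L,M,M')$, giving the three claimed bounds.

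\textbf{Main obstacle.} Conceptually the trilinear estimate \eqref{3.20} already does the work that \eqref{3.10} did in Lemma \ref{lemma3.2}, so the routine calculations transfer. The only delicate point is the propagation of the free-evolution smallness \eqref{3.24} through the $J$ subintervals: each application of the short-time lemma inflates the relevant constants geometrically, and one must choose $\epsilon_1=\epsilon_1(M,M',L)$ so that the final accumulated error is still below $\epsilon_0$. As in Lemma \ref{lemma3.2}, this is handled by fixing $J$ from $(M,M',L)$ first and then taking $\epsilon$ small with respect to the resulting constant. No analog of the spatial-derivative bookkeeping of the $\dot H^1$-critical case is needed here, which makes this step strictly easier than its Lemma \ref{lemma3.2} counterpart.
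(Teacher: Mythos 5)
Your proposal is correct and reproduces the paper's intended argument: the paper explicitly says to substitute the $L^2$-level trilinear Hartree estimate \eqref{3.20} for the $\dot H^1$-level one \eqref{3.10} and then "use a similar argument as above," i.e. the short-time bootstrap followed by the subdivision/iteration scheme of Lemma \ref{lemma3.2}. Your short-time step (working with $S(t)$ in $\dot N^0$, the seven trilinear pieces, Strichartz plus \eqref{3.24}, a continuity argument) and your iteration with $J\sim(1+L/\epsilon_0)^6$ subintervals are precisely that adaptation, including a small notational clean-up of the paper's index range in the trilinear sum.
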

\ \quad The corresponding stability results for the $H_x^1$-critical
and the $L_x^2$-critical $\textsl{NLS}$ with power type have been
established by \cite{24,25}. However, when the dimension n is
greater than 6, the case is more delicate as derivatives of the
nonlinearity are merely H\"older continuous of order $\frac{4}{n-2}$
rather than Lipshchitz. One can find the details in \cite{24,25}, we
state their result below:
\begin{lemma}\label{lemma3.4}($H_x^1$-critical stability result for power type)
Let I be a compact interval, $t_0\in I$, $\tilde{u}$ be a function
on $I\times\mathbb{R}^n$ which is a near-solution to \eqref{4} in
the sense that
\[
(i\partial_t+\Delta)\tilde{u}=\lambda|\tilde{u}|^{\frac{4}{n-2}}\tilde{u}+e\qquad\mbox{for
some function e},
\]and  $u(t_0)$ be close to $\tilde{u}(t_0)$ in the sense that
\begin{eqnarray}\label{3.30}
\parallel u(t_0)-\tilde{u}(t_0)\parallel_{\dot{H}_x^1}\leq E_0'\qquad\mbox{for some
}E_0'>0.
\end{eqnarray}
Suppose that we  have the energy bound
\begin{eqnarray}\label{3.29}
\parallel \tilde{u}\parallel_{L_t^\infty
\dot{H}^1(I\times\mathbb{R}^n)}\leq E_0\qquad\mbox{for some }E_0>0
\end{eqnarray}
and the following conditions to be true
\begin{eqnarray}
&&\hspace{-9cm}\parallel
\tilde{u}\parallel_{W(I)}\leq M_0\hspace{3.5cm}\mbox{for some }M_0>0\label{3.31}\\
&&\hspace{-9cm}\left(\sum\limits_N\parallel P_N\nabla
e^{\left(i(t-t_0)\Delta\right)}(u(t_0)-\tilde{u}(t_0))\parallel_{L_t^{\frac{2(n+2)}{n-2}}L_x^{\frac{2n(n+2)}{n^2+4}}(I\times\mathbb{R}^n)}^2\right)^\frac{1}{2}\leq\epsilon\label{3.32}\\
&&\hspace{-9cm}\parallel
e\parallel_{\dot{N}^1(I\times\mathbb{R}^n)}\leq\epsilon\\
\hspace{-2.5cm}\mbox{for some }0<\epsilon<\epsilon_2\mbox{, where
}\epsilon_2=\epsilon_2(E_0,E_0',M_0)\mbox{ is a small constant
}\nonumber
\end{eqnarray}
Then, there exists a solution $u$ to \eqref{4} on
$I\times\mathbb{R}^n$ with the special initial datum $u(t_0)$ at
$t_0$, and
\begin{eqnarray}
\parallel
u-\tilde{u}\parallel_{\dot{S}^1(I\times\mathbb{R}^n)}&\lesssim&
C(E_0,E'_0,M_0)(E_0'+\epsilon+\epsilon^{\frac{7}{(n-2)^2}}),\\
\parallel
u\parallel_{\dot{S}^1(I\times\mathbb{R}^n)}&\lesssim&C(M_0,E_0',E_0),\\
\parallel
u-\tilde{u}\parallel_{L_t^{\frac{2(n+2)}{n-2}}L_x^{\frac{2n(n+2)}{n^2+4}}(I\times\mathbb{R}^n)}&\lesssim&
C(M_0,E_0,E_0')(\epsilon+\epsilon^{\frac{7}{(n-2)^2}}).
\end{eqnarray}
\begin{remark}
From \cite{24} by Strichartz and Plancherel, on the slab
$I\times\mathbb{R}^n$ we have
\begin{eqnarray*}
\left(\sum\limits_N\parallel P_N\nabla
e^{\left(i(t-t_0)\Delta\right)}(u(t_0)-\tilde{u}(t_0))\parallel_{L_t^{\frac{2(n+2)}{n-2}}L_x^{\frac{2n(n+2)}{n^2+4}}(I\times\mathbb{R}^n)}^2\right)^\frac{1}{2}\\
&&\hspace{-3cm}\lesssim\left(\sum\limits_N\parallel P_N
\nabla(u(t_0)-\tilde{u}(t_0))\parallel^2_{L_t^\infty L_x^2}\right)^\frac{1}{2}\\
&&\hspace{-3cm}\lesssim\ \parallel\nabla(u(t_0)-\tilde{u}(t_0))\parallel^2_{L_t^\infty L_x^2}\\
&&\hspace{-3cm}\lesssim E_0'\\
\end{eqnarray*}
so the hypothesis \eqref{3.32} is redundant if $E_0'$ is small.
\end{remark}
\end{lemma}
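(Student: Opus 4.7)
The plan is to mirror the two-step architecture already used for Lemma \ref{lemma3.2} (short-time perturbation, then iteration), but with the power-type energy-critical nonlinearity in place of the Hartree one. First I would establish a short-time version: assume in addition that $\|\tilde u\|_{W(I)}\le\eta_0$ for a small absolute constant $\eta_0$. Setting $z=u-\tilde u$, one has
\begin{equation*}
(i\partial_t+\Delta)z=\lambda\bigl(|u|^{4/(n-2)}u-|\tilde u|^{4/(n-2)}\tilde u\bigr)+e,
\end{equation*}
and the Strichartz estimate together with the hypotheses \eqref{3.30}--\eqref{3.33} reduces everything to estimating $\|\nabla(|u|^{4/(n-2)}u-|\tilde u|^{4/(n-2)}\tilde u)\|_{\dot N^1}$ in terms of $\|\nabla z\|_{L^{2(n+2)/(n-2)}_t L^{2n(n+2)/(n^2+4)}_x}$ and $\|\tilde u\|_{W}$. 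A standard continuity/bootstrap argument then closes the inequality when $\eta_0$ is small relative to $E_0$ and $M_0$, producing the analogues of the four conclusions with $\epsilon$ replaced by $\epsilon+\epsilon^{7/(n-2)^2}$.

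The second step is to partition $I$ into $J\sim(1+M_0/\eta_0)^{2(n+2)/(n-2)}$ subintervals $I_j$ on each of which $\|\tilde u\|_{W(I_j)}\le\eta_0$. I would then iterate the short-time result inductively, propagating the data bound \eqref{3.30} and the Strichartz-type bound \eqref{3.32} from $I_j$ to $I_{j+1}$. At each step the initial error grows by a fixed multiplicative constant, so after $J$ steps the total constant is of the form $C(E_0,E'_0,M_0)$, yielding the stated estimates. The hypothesis \eqref{3.32} is stated as a square-function over Littlewood-Paley pieces (rather than simply $\|\nabla e^{i(t-t_0)\Delta}(u(t_0)-\tilde u(t_0))\|_{L^{2(n+2)/(n-2)}_t L^{2n(n+2)/(n^2+4)}_x}$) precisely so that the iteration step can be closed without losing a logarithm in $J$.

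The main obstacle is the fractional Hölder step needed when $n\ge 6$. In that regime $\tfrac{4}{n-2}<1$, so the map $u\mapsto|u|^{4/(n-2)}u$ has derivatives that are merely Hölder continuous of order $\tfrac{4}{n-2}$, and the naive pointwise bound
\begin{equation*}
\bigl|\nabla\bigl(|u|^{4/(n-2)}u-|\tilde u|^{4/(n-2)}\tilde u\bigr)\bigr|\lesssim\bigl(|u|^{4/(n-2)}+|\tilde u|^{4/(n-2)}\bigr)|\nabla z|+\bigl(|u|^{4/(n-2)-1}+|\tilde u|^{4/(n-2)-1}\bigr)|z||\nabla\tilde u|
\end{equation*}
contains the term $|u|^{4/(n-2)-1}$ with a negative exponent and is not directly usable. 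Following \cite{24,25}, I would replace this pointwise estimate by a frequency-localized one: decompose $z=\sum_N P_N z$, separate high/low frequencies according to a threshold chosen in terms of $\|\nabla z\|_{L^\infty_t L^2_x}$ and the dual Strichartz norms, and exploit Bernstein to trade smoothness for integrability on the low frequencies and a fractional chain rule on the high frequencies. This is precisely the source of the exotic exponent $7/(n-2)^2$ appearing in the final estimate.

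Finally, for $3\le n\le 5$ the nonlinearity is $C^1$ with Lipschitz derivative, so the argument is substantially easier and the term $\epsilon^{7/(n-2)^2}$ can be absorbed by $\epsilon$; in this case the proof runs essentially as in Lemma \ref{lemma3.2}, with H\"older and Sobolev replacing Hardy--Littlewood--Sobolev. I will therefore present the proof only for $n\ge 6$, observing at the outset that the lower-dimensional case is a simplification.
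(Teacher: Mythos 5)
A preliminary point of comparison: the paper does not prove this lemma at all — it states it as quoted from \cite{24,25}, remarking only that for dimensions greater than $6$ the derivatives of the nonlinearity are merely H\"older continuous of order $\frac{4}{n-2}$. So there is no in-paper argument for your sketch to match; the relevant benchmark is the proof in \cite{24,25}. Your two-step scheme (a short-time perturbation lemma under $\parallel\tilde u\parallel_{W(I)}\le\eta_0$, followed by subdividing $I$ into $J\sim(1+M_0/\eta_0)^{2(n+2)/(n-2)}$ subintervals and inducting) is indeed the structure of those proofs, and in the regime where $\frac{4}{n-2}\ge 1$ (which includes $n=6$, not only $n\le 5$ as you state) the derivative of the nonlinearity is Lipschitz and the argument closes exactly as in Lemma \ref{lemma3.2}.

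For $n>6$, however — the only case you propose to present in detail — the mechanism you describe is not the one that works, and as written it leaves the crucial step unproved. Writing $F(u)=|u|^{4/(n-2)}u$, the problematic term is $(F'(u)-F'(\tilde u))\nabla\tilde u$, which can only be bounded through $|F'(u)-F'(\tilde u)|\lesssim|z|^{4/(n-2)}$: no derivative falls on $z$ and the dependence is a sublinear power. Splitting $z$ into Littlewood--Paley pieces and invoking Bernstein trades derivatives for integrability, but it cannot manufacture the missing Lipschitz dependence on $\nabla z$, so a bootstrap run directly at the level of $\dot S^1$ norms of $z$ does not close this way. What \cite{25} (and \cite{24}) actually do is avoid differentiating the difference of nonlinearities altogether: the perturbation argument is first closed in exotic Strichartz-type norms carrying only $|\nabla|^{4/(n-2)}$ derivatives, for which the H\"older bound on $F'$ matches the available fractional chain rule, and the $\dot S^1$ control of $u-\tilde u$ is recovered only afterwards; this two-tier structure is precisely the source of the exponent $\epsilon^{7/(n-2)^2}$ and the reason the hypothesis \eqref{3.32} is phrased as a square function over the pieces $P_N$, each of which is fed into that bootstrap separately. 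Since both you and the paper ultimately defer to \cite{24,25}, citing the lemma is acceptable, but your description of the high-dimensional step should either be corrected to the exotic-norm argument or replaced by an explicit appeal to the corresponding lemma in \cite{24,25}.
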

\begin{lemma}\label{lemma3.5}($L_x^2$-critical stability result for power type)\par
Let $I$ be a compact interval, $t_0\in I$, $\tilde{u}$ be a function
on $I\times\mathbb{R}^n$ which is a near-solution to \eqref{5} in
the sense that
\[
(i\partial_t+\Delta)\tilde{u}=\lambda|\tilde{u}|^{\frac{4}{n}}\tilde{u}+e\qquad\mbox{for
some function e},
\]and  $u(t_0)$ be close to $\tilde{u}(t_0)$ in the sense
that
\begin{eqnarray}\label{3.38}
\parallel u(t_0)-\tilde{u}(t_0)\parallel_{L_x^2(\mathbb{R}^n)}\leq M_0'\qquad\mbox{for some
}M_0'>0.
\end{eqnarray}
Suppose that we  have the mass bound
\begin{eqnarray}
\parallel \tilde{u}\parallel_{L_t^\infty
L_x^2(I\times\mathbb{R}^n)}\leq M_0\qquad\mbox{for some }M_0>0,
\end{eqnarray}
and  the following conditions to be true
\begin{eqnarray}
\parallel
\tilde{u}\parallel_{V(I)}&\leq& L_0\qquad\mbox{for some }L_0>0,\label{3.39}\\
\parallel e^{i(t-t_0)\Delta}
(u(t_0)-\tilde{u}(t_0))\parallel_{V(I)}&\leq&\epsilon,\\
\parallel
e\parallel_{\dot{N}^0(I\times\mathbb{R}^n)}&\leq&\epsilon\end{eqnarray}
for some $0<\epsilon<\epsilon_3$, where $\epsilon_3$ is a small
constant $\epsilon_3=\epsilon_3(M_0,M_0',L_0)>0$.

Then, there exists a solution $u$ to \eqref{5} on
$I\times\mathbb{R}^n$ with the special initial datum $u(t_0)$ at
$t_0$, and furthermore,
\begin{eqnarray}
\parallel
u-\tilde{u}\parallel_{\dot{S}^0(I\times\mathbb{R}^n)}&\lesssim&
C(L_0,M_0,M_0')M_0',\\
\parallel
u\parallel_{\dot{S}^0(I\times\mathbb{R}^n)}&\lesssim&C(L_0,M_0,M_0'),\\
\parallel
u-\tilde{u}\parallel_{V(I)}&\lesssim&C(L_0,M_0,M_0')\epsilon.
\end{eqnarray}
\end{lemma}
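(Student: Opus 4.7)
The plan is to mimic the two-step pattern already used in Lemmas \ref{lemma3.2} and \ref{lemma3.3}: first establish a short-time perturbation result under a smallness hypothesis on $\|\tilde u\|_{V(I)}$, then iterate over finitely many subintervals to remove that smallness. Because the lemma is the $L^2_x$-critical analogue for the power-type nonlinearity $F(z)=\lambda|z|^{4/n}z$, the working norms are the mass-critical pair $\dot S^0$ and $V$, replacing the Hartree bilinear estimate of Lemma \ref{lemma3.3} by the pointwise inequality $|F(a)-F(b)|\lesssim(|a|^{4/n}+|b|^{4/n})|a-b|$ and its gradient version.

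For the short-time step, fix a subinterval $I_0=[t_0,t_1]$ on which $\|\tilde u\|_{V(I_0)}\le\epsilon_0$ with $\epsilon_0=\epsilon_0(M_0,M_0',L_0)$ to be chosen. Set $w=u-\tilde u$, so that
\[
(i\partial_t+\Delta)w = F(\tilde u+w)-F(\tilde u)-e.
\]
By Strichartz (Lemma 2.2, $k=0$), H\"older with the admissible pair $(V,V)$, and the pointwise bound above,
\[
\|w\|_{\dot S^0(I_0)}+\|w\|_{V(I_0)}\lesssim \|e^{i(t-t_0)\Delta}w(t_0)\|_{V(I_0)}+\|e\|_{\dot N^0(I_0)}
+\bigl(\|\tilde u\|_{V(I_0)}^{4/n}+\|w\|_{V(I_0)}^{4/n}\bigr)\|w\|_{V(I_0)}.
\]
Together with the hypotheses \eqref{3.24} (modified to $V$) and \eqref{3.39}, a standard continuity argument then yields $\|w\|_{\dot S^0(I_0)}+\|w\|_{V(I_0)}\lesssim M_0'+\epsilon$ and, from the equation, $\|(i\partial_t+\Delta)w\|_{\dot N^0(I_0)}\lesssim\epsilon$.

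For the iteration, partition $I$ into $J\sim(1+L_0/\epsilon_0)^{2(n+2)/n}$ consecutive subintervals $I_j$ with $\|\tilde u\|_{V(I_j)}\le\epsilon_0$, and apply the short-time step to $I_0, I_1,\dots$ in turn. At each passage from $I_j$ to $I_{j+1}$, the mass-$L^2$ difference of the data is controlled by $\|w\|_{\dot S^0(I_j)}$, and the free-evolution hypothesis on $I_{j+1}$ is recovered by combining \eqref{3.24} with the Strichartz bound on $(i\partial_t+\Delta)w$ on previous intervals, at the cost of a multiplicative constant. Iterating through $J$ intervals, the errors accumulate to a final constant $C(L_0,M_0,M_0')$, yielding the three asserted bounds.

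The main technical point I expect trouble with is the case $n\ge 5$, where $F$ has derivative of H\"older regularity $4/n<1$, so the naive chain-rule argument used above is not enough to produce the $\dot S^1$-type bound on the nonlinear difference if one ever needs gradients (here we only claim $\dot S^0$ bounds, so the difficulty does not actually arise for the lemma as stated, but it would if one were to propagate $H^1$ regularity). In that case one must replace the pointwise Lipschitz estimate by the Littlewood--Paley/exotic-Strichartz device from \cite{24,25}, which is precisely the analogue used in Lemma \ref{lemma3.4}; all other parts of the proof are insensitive to the dimension and go through unchanged.
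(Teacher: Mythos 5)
The paper does not actually prove Lemma \ref{lemma3.5}: it explicitly states that the $H^1_x$- and $L^2_x$-critical stability results for power-type NLS ``have been established by \cite{24,25}'' and says ``one can find the details in \cite{24,25}, we state their result below.'' Your sketch reconstructs the standard short-time-plus-iteration argument from those references, in the same two-step pattern the paper does spell out for the Hartree analogues (Lemmas 3.1--\ref{lemma3.3}), and it is correct: the pointwise Lipschitz bound $|F(a)-F(b)|\lesssim(|a|^{4/n}+|b|^{4/n})|a-b|$ for $F(z)=\lambda|z|^{4/n}z$ together with H\"older in the exponent triple $\frac{n+4}{2(n+2)}=\frac{4}{n}\cdot\frac{n}{2(n+2)}+\frac{n}{2(n+2)}$ closes the $\dot N^0$ estimate, the partition count $J\sim(1+L_0/\epsilon_0)^{2(n+2)/n}$ is right, and your remark that the $n\ge 5$ H\"older-regularity-of-$\nabla F$ issue is irrelevant here is also right, since the lemma only asserts $\dot S^0$-level bounds and never needs a chain rule on $F$. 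One small point of bookkeeping: in the short-time step the quantity that stays $O(\epsilon)$ and gets propagated is $\|u-\tilde u\|_{V(I_j)}$ (together with $\|(i\partial_t+\Delta)(u-\tilde u)\|_{\dot N^0(I_j)}$), while $\|u-\tilde u\|_{\dot S^0(I_j)}$ is only $O(M_0'+\epsilon)$; your sentence lumps them as ``$\lesssim M_0'+\epsilon$,'' which is true but obscures which quantity drives the induction. With that clarified, the proposal matches the intended (cited) proof.
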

\par
 To conclude this  section, we state the results involving
 persistence of $L^2$ or $\dot{H}^1$ regularity for critical $\textsl{NLS}$ with
 Hartree type or power type:
\begin{lemma}\label{lemma 3.6}(Persistence of regularity):
Let $k=0,1$, and $I$ be a compact interval, $t_0\in I$ .\\
case1:  $u$ is a solution to \eqref{2} on $I\times\mathbb{R}^n$
obeying the bounds
\[
\parallel
u\parallel_{L_t^6L_x^{\frac{6n}{3n-8}}(I\times\mathbb{R}^n)}\leq M.
\]
Then, if  $u(t_0)\in\dot{H}{_x^k}$, we have
\[
\parallel u\parallel_{\dot{S}^k(I\times\mathbb{R}^n)}\leq
C(M)\parallel u(t_0)\parallel_{\dot{H}{_x^k}}
\]
case 2: $u$ is a solution to \eqref{3} on $I\times\mathbb{R}^n$
obeying the bounds
\[
\parallel
u\parallel_{U(I)}\leq L
\]
Then, if $u(t_0)\in\dot{H}{_x^k}$, we have
\[
\parallel u\parallel_{\dot{S}^k(I\times\mathbb{R}^n)}\leq
C(L)\parallel u(t_0)\parallel_{\dot{H}{_x^k}}
\]
case 3:  $u$ is a solution to \eqref{4} on $I\times\mathbb{R}^n$
obeying the bounds
\[
\parallel
u\parallel_{W(I)}\leq M.
\]
Then, if  $u(t_0)\in\dot{H}{_x^k}$, we have
\[
\parallel u\parallel_{\dot{S}^k(I\times\mathbb{R}^n)}\leq
C(M)\parallel u(t_0)\parallel_{\dot{H}{_x^k}}.
\]
case 4: Let $u$ be a solution to \eqref{5} on $I\times\mathbb{R}^n$
obeying the bounds
\[
\parallel
u\parallel_{V(I)}\leq L.
\]
Then, if $u(t_0)\in\dot{H}{_x^k}$, we have
\[
\parallel u\parallel_{\dot{S}^k(I\times\mathbb{R}^n)}\leq
C(L)\parallel u(t_0)\parallel_{\dot{H}{_x^k}}.
\]
\end{lemma}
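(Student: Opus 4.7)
\textit{Proof proposal.}
The proof in all four cases is a standard Strichartz--bootstrap argument and can be carried out uniformly. Write $X(I)$ for the controlling scattering norm appearing in the hypothesis (namely $L^6_tL^{6n/(3n-8)}_x$ in case 1, $U$ in case 2, $W$ in case 3, $V$ in case 4), and let $\mathcal{M}$ denote the bound $M$ or $L$ on that norm. In each case the nonlinearity $F(u)$ is one of $(|x|^{-4}\ast|u|^2)u$, $(|x|^{-2}\ast|u|^2)u$, $|u|^{\frac{4}{n-2}}u$, $|u|^{\frac{4}{n}}u$; the crucial input is Lemma~\ref{lemma2.8}, which at each of these four critical exponents specializes to an estimate of the clean form
\[
\parallel F(u)\parallel_{\dot{N}^k(J)}\lesssim\parallel u\parallel_{X(J)}^{\alpha}\parallel u\parallel_{\dot{S}^k(J)},
\]
with $\alpha=2$ in cases 1--2 and $\alpha=\frac{4}{n-2}$, $\alpha=\frac{4}{n}$ in cases 3 and 4 respectively. (The companion factor occurring in Lemma~\ref{lemma2.8} has exponent zero at each critical value and therefore drops out, and the remaining $\parallel|\nabla|^ku\parallel_{U}$ or $\parallel|\nabla|^ku\parallel_{V}$ is dominated by $\parallel u\parallel_{\dot{S}^k}$ since $U$ and $V$ are Schr\"odinger-admissible.)

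Given this estimate, the plan is as follows. Fix a small $\eta>0$ to be chosen, and partition $I$ into consecutive subintervals $I_1,\ldots,I_J$ on each of which $\parallel u\parallel_{X(I_j)}\le\eta$; since $X$ is an $L^q_tL^r_x$ norm with $q<\infty$ and $\parallel u\parallel_{X(I)}\le\mathcal{M}$, such a partition exists with $J=J(\mathcal{M},\eta)$. Let $t_{j-1}$ denote the left endpoint of $I_j$. Applying the Strichartz estimate together with the nonlinear bound above yields
\[
\parallel u\parallel_{\dot{S}^k(I_j)}\le C\parallel u(t_{j-1})\parallel_{\dot{H}^k}+C\eta^{\alpha}\parallel u\parallel_{\dot{S}^k(I_j)}.
\]
Choose $\eta$ small enough that $C\eta^{\alpha}\le\tfrac12$; absorbing the nonlinear contribution into the left-hand side gives $\parallel u\parallel_{\dot{S}^k(I_j)}\le 2C\parallel u(t_{j-1})\parallel_{\dot{H}^k}$, and since $\parallel u(t_j)\parallel_{\dot{H}^k}$ is in turn controlled by $\parallel u\parallel_{\dot{S}^k(I_j)}$, an induction on $j$ delivers
\[
\parallel u\parallel_{\dot{S}^k(I_j)}\le(2C)^{j}\parallel u(t_0)\parallel_{\dot{H}^k};
\]
summing over $j=1,\ldots,J$ produces $\parallel u\parallel_{\dot{S}^k(I)}\le C(\mathcal{M})\parallel u(t_0)\parallel_{\dot{H}^k}$, as claimed.

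The principal point to verify, and what I expect to be the main obstacle, is that Lemma~\ref{lemma2.8} really does collapse to an estimate in which only the hypothesized scattering norm carries a positive power on the right. This is an algebraic check at the critical exponents: at $\gamma=4$ the factor $\parallel u\parallel_{U}^{4-\gamma}$ in the Hartree estimate has exponent zero, while at $\gamma=2$ the factor $\parallel u\parallel_{L^6_tL^{6n/(3n-8)}_x}^{\gamma-2}$ does; the analogous cancellations occur for the power-type nonlinearity at $p=\frac{4}{n-2}$ and $p=\frac{4}{n}$. Once this matching is confirmed the absorption step is mechanical. The only other concern is case 3 in dimensions $n>6$, where the map $u\mapsto|u|^{\frac{4}{n-2}}u$ is merely H\"older continuous in its derivative (as recalled before Lemma~\ref{lemma3.4}); but that subtlety is already encapsulated inside Lemma~\ref{lemma2.8}, so no additional fractional chain rule needs to be produced here.
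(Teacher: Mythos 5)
Your proposal is correct and follows essentially the same Strichartz--bootstrap argument the paper gives (the paper writes out case 1 and omits the rest by similarity): subdivide $I$ so the scattering norm is small on each piece, apply Strichartz plus the nonlinear estimate, absorb via smallness, iterate over the subintervals, and sum. Your explicit check that Lemma~\ref{lemma2.8} collapses to the clean form $\parallel F(u)\parallel_{\dot{N}^k}\lesssim\parallel u\parallel_X^\alpha\parallel u\parallel_{\dot{S}^k}$ at the four critical exponents is accurate and is exactly what the paper implicitly uses.
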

\begin{proof}
The method to prove these four cases is similar, we only consider
the first case, and the others are omitted.\par
 Subdivide the interval $I$ into $N\sim(1+\frac{M}{\eta})^6$
 subintervals $I_j=[t_j,t_{j+1}]$ such that
 \[
\parallel
u\parallel_{L_t^6L_x^{\frac{6n}{3n-8}}(I_j\times\mathbb{R}^n)}\leq\eta
 \]
where $\eta$ is a small positive constant to be chosen later. By
using Strichartz estimates, on each $I_j$ we obtain
\begin{eqnarray*}
\parallel u\parallel_{\dot{S}^k(I_j\times\mathbb{R}^n)}&\lesssim&\parallel
u(t_j)\parallel_{\dot{H}{_x^k}}+\parallel
u\parallel_{\dot{S}^k(I_j\times\mathbb{R}^n)}\parallel
u\parallel^2_{L_t^6L_x^{\frac{6n}{3n-8}}(I_j\times\mathbb{R}^n)}\\
&\lesssim&\parallel u(t_j)\parallel_{\dot{H}{_x^k}}+\eta^2\parallel
u\parallel_{\dot{S}^k(I_j\times\mathbb{R}^n)}
\end{eqnarray*}
Choosing $\eta$ sufficiently small, we get
\[
\parallel
u\parallel_{\dot{S}^k(I_j\times\mathbb{R}^n)}\lesssim\parallel
u(t_j)\parallel_{\dot{H}{_x^k}}.
\]
Next, we consider the relationship between $\parallel
u(t_j)\parallel_{\dot{H}{_x^k}}$ and $\parallel
u(t_0)\parallel_{\dot{H}{_x^k}}$. For $I_0$, we have
\[\parallel
u(t_1)\parallel_{\dot{H}{_x^k}}\leq\parallel
u\parallel_{\dot{S}^k(I_0\times\mathbb{R}^n)}\leq C\parallel
u(t_0)\parallel_{\dot{H}{_x^k}}.
\]
For $I_1$, we have
\[\parallel
u(t_2)\parallel_{\dot{H}{_x^k}}\leq\parallel
u\parallel_{\dot{S}^k(I_1\times\mathbb{R}^n)}\leq C\parallel
u(t_1)\parallel_{\dot{H}{_x^k}}\leq C^2\parallel
u(t_0)\parallel_{\dot{H}{_x^k}}
\]
by using iteration arguments, for each $I_j$ we can obtain:
\[
\parallel
u(t_j)\parallel_{\dot{H}{_x^k}}\leq C^j\parallel
u(t_0)\parallel_{\dot{H}{_x^k}}
\]
Adding these estimates over all the subinterval $I_j$, we can get
the results.
\end{proof}

\section{Global well-posedness}
\ \quad The aim of this section is to prove the Theorem \ref{1.1}.
For the convenience, we shall abbreviate the energy $E(u)$ to E, and
the mass $M(u)$ to $M$. In order to prove the global well-posedness
of \eqref{1}, we should state that the blowup couldn't hold. For
\eqref{1} with $\dot{H}_x^1-$subcritical nonlinearities, we should
prove that $\parallel u(t)\parallel_{H_x^1}$ is bounded for all time
where the solution is defined. We notice the conservation of mass,
so we focus  on the bounds of $\parallel
u(t)\parallel_{\dot{H}_x^1}$. For \eqref{1} with a
$\dot{H}_x^1-$critical nonlinearity, we view the energy-subcritical
nonlinearity as a perturbation to the energy-critical
$\textsl{NLS}$, which is globally well-posed. For any compact
interval $I$, $u$ is the strong solution to \eqref{1} which is
defined on $I\times\mathbb{R}^n$. $u_0\in H_x^1$ is the initial
datum.
\subsection{Bound State}
\ \quad Let $R(x)$ and $W(x)$ be the positive radial Schwartz
solution of the ground state to the elliptic equations respectively:
\begin{eqnarray*}
\Delta R+|R|^pR&=&\frac{4-(n-2)p}{np}R,\\
\Delta
W+\left(|x|^{-\gamma}\ast|w|^2\right)W&=&\frac{4-\gamma}{\gamma}W.
\end{eqnarray*}
From the work of \cite{1,8,19} and \cite{26}, we have the following
characterization of $R$ and $W$:
\begin{eqnarray}
 \parallel u\parallel_{L^{p+2}}^{p+2}&\leq& C_R\parallel
\nabla u\parallel_{L^2}^{\frac{np}{2}}\parallel
u\parallel_{L^2}^{\frac{4-(n-2)p}{2}},\qquad\forall\ u,v \in H_x^1,\label{4.1}\\
\parallel
\left(|x|^{-\gamma}\ast|v|^2\right)|v|^2\parallel_{L^1}&\leq&
C_W\parallel \nabla v\parallel_{L^2}^\gamma\parallel
v\parallel_{L^2}^{4-\gamma},\label{4.2}
\end{eqnarray}
where $C_R$ and $C_W$ is the best constant for their respective
inequality, moreover
 \begin{eqnarray*}C_R&=&\frac{2(p+2)}{np}\parallel \nabla
R\parallel_{L^2}^{-p}=\frac{2(p+2)}{np}\parallel
R\parallel_{L^2}^{-p},\\
C_W&=&\frac{4}{\gamma}\parallel \nabla
W\parallel_{L^2}^{-2}=\frac{4}{\gamma}\parallel
W\parallel_{L^2}^{-2}.
\end{eqnarray*}
If we define\begin{eqnarray*} \tilde{E}(R):&=&\frac{1}{2}\int|\nabla
R|^2\,dx-\frac{1}{p+2}\int|R|^{p+2}\,dx,\\
\tilde{E}(W):&=&\frac{1}{2}\int|\nabla
W|^2\,dx-\frac{1}{4}\int\left(|x|^{-\gamma}\ast|w|^2\right)|W|^2\,dx,
\end{eqnarray*}
then, we have
\begin{eqnarray*}
\tilde{E}(R)&=&\left(\frac{1}{2}-\frac{2}{np}\right)\int|\nabla
R|^2\,dx=\left(\frac{1}{2}-\frac{2}{np}\right)\left(\frac{2(p+2)}{npC_R}\right)^{\frac{2}{p}},\\
\tilde{E}(W)&=&\left(\frac{1}{2}-\frac{1}{\gamma}\right)\int|\nabla
W|^2\,dx=\frac{2(\gamma-2)}{\gamma^2C_W}.
\end{eqnarray*}

Define $E_1:=\frac{1}{2}\int|\nabla
u|^2\,dx-\frac{|\lambda_1|}{p+2}\int|u|^{p+2}\,dx$, where
$\lambda_1$ is the constant in \eqref{1}.
\begin{lemma}
Assume that
\begin{eqnarray*}
\parallel \nabla u\parallel_{L^2}^{2}\left(\parallel
u\parallel_{L^2}^2\right)^\frac{4-(n-2)p}{np-4}&<&|\lambda_1|^{\frac{4}{4-np}}\parallel
\nabla R\parallel_{L^2}^{\frac{4p}{np-4}},\\
E_1\cdot\left(\parallel
u\parallel_{L^2}^2\right)^\frac{4-(n-2)p}{np-4}&\leq&(1-\delta_0)|\lambda_1|^{\frac{4}{4-np}}\left(\frac{2np}{np-4}\right)^{\frac{4-(n-2)p}{np-4}}
\left(\tilde{E}(R)\right)^\frac{2p}{np-4},\quad\mbox{where
}\delta_0>0.
\end{eqnarray*}
Then, when $\frac{4}{n}<p\leq\frac{4}{n-2}$, there exists a
$\bar{\delta}=\bar{\delta}(\delta_0,n)>0$ such that
\begin{eqnarray*}
&&\parallel \nabla u\parallel_{L^2}^{2}\left(\parallel
u\parallel_{L^2}^2\right)^\frac{4-(n-2)p}{np-4}\leq(1-\bar{\delta})|\lambda_1|^{\frac{4}{4-np}}\parallel
\nabla R\parallel_{L^2}^{\frac{4p}{np-4}},\\
&&\hspace{-2cm}\mbox{and}\qquad\qquad E_1\geq 0.
\end{eqnarray*}
\end{lemma}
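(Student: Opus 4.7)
The plan is to reduce both inequalities to an elementary analysis of a single universal profile function, following the Kenig--Merle style analysis adapted to the mass-supercritical / energy-subcritical range $4/n<p\le 4/(n-2)$. Throughout, write $K:=\|\nabla u\|_{L^2}^2$, $M:=\|u\|_{L^2}^2$, $\alpha:=\frac{4-(n-2)p}{np-4}$, and set $c_0:=|\lambda_1|^{\frac{4}{4-np}}\|\nabla R\|_{L^2}^{\frac{4p}{np-4}}$, so the first hypothesis reads $KM^{\alpha}<c_0$ and the goal is $KM^{\alpha}\le(1-\bar\delta)c_0$.

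First, apply the sharp Gagliardo--Nirenberg inequality \eqref{4.1} to produce the scalar lower bound
\[
E_1\;\ge\;f(K)\;:=\;\tfrac12 K-\tfrac{|\lambda_1|C_R}{p+2}\,K^{np/4}\,M^{\frac{4-(n-2)p}{4}}.
\]
A direct differentiation shows $f$ attains its maximum on $(0,\infty)$ at a unique point $K_0$, and using $C_R=\tfrac{2(p+2)}{np}\|\nabla R\|_{L^2}^{-p}$ one computes that $K_0M^{\alpha}=c_0$ exactly, and that the maximum value satisfies $f(K_0)M^{\alpha}=\tfrac{np-4}{2np}\,c_0$. Moreover, using the explicit formula $\tilde E(R)=\tfrac{np-4}{2np}\|\nabla R\|_{L^2}^2$, one checks that the right-hand side of the second hypothesis is precisely $(1-\delta_0)\,f(K_0)M^{\alpha}$.

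Next, reparametrize $K=\tau K_0$ with $\tau>0$. Plugging in and simplifying (all $M$-dependence cancels because of the choice of $K_0$) gives the universal identity
\[
\frac{f(\tau K_0)M^{\alpha}}{f(K_0)M^{\alpha}}\;=\;h(\tau),\qquad h(\tau):=\frac{np\,\tau-4\,\tau^{np/4}}{np-4}.
\]
Since $p>4/n$, the exponent $np/4>1$, so $h$ is smooth, strictly increasing on $[0,1]$, $h(0)=0$, $h(1)=1$, and $h$ is strictly decreasing for $\tau>1$. Combining $E_1\ge f(K)$ with the second hypothesis yields $h(\tau)\le 1-\delta_0$; the first hypothesis says $\tau<1$; monotonicity on $[0,1]$ therefore forces $\tau\le h^{-1}(1-\delta_0)=:1-\bar\delta$, with $\bar\delta>0$ depending only on $\delta_0$ and the exponent $np/4$ (hence only on $\delta_0$ and $n$, once $p$ is fixed by the problem). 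This is the first claimed bound.

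Finally, for $E_1\ge0$: on the interval $\tau\in[0,1]$ one has $h(\tau)\ge 0$ (since $np>4$ implies $1<(np/4)^{4/(np-4)}$, so $h$ is non-negative throughout $[0,1]$), hence $f(K)\ge 0$ whenever $K\le K_0$, and thus $E_1\ge f(K)\ge 0$. The only potential obstacle is the bookkeeping in the exponents showing that the threshold in the hypothesis matches $f(K_0)M^{\alpha}$ and that the rescaled profile is $M$-independent; both follow from the explicit value of $C_R$ and $\tilde E(R)$ in terms of $\|\nabla R\|_{L^2}$, so no deeper PDE input is needed beyond the sharp Gagliardo--Nirenberg inequality already recorded in \eqref{4.1}.
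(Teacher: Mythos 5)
Your proposal is correct and takes essentially the same route as the paper: the sharp Gagliardo--Nirenberg bound \eqref{4.1}, the scalar function $f$, its unique maximizer, and monotonicity on $[0,x_0]$, with your normalization $K=\tau K_0$ and the universal profile $h(\tau)=\frac{np\,\tau-4\tau^{np/4}}{np-4}$ merely making explicit the uniformity behind the paper's assertion that $\bar\delta=\bar\delta(\delta_0,n)$ exists. Incidentally, your value $f(K_0)M^{\alpha}=\frac{np-4}{2np}\,c_0=\bigl(\tfrac{2np}{np-4}\bigr)^{\alpha}|\lambda_1|^{\frac{4}{4-np}}\bigl(\tilde E(R)\bigr)^{\frac{2p}{np-4}}$ is the correct one; the paper's displayed formula for $f(x_0)$ carries a spurious extra factor $\bigl(\tfrac12-\tfrac{2}{np}\bigr)$, which does not affect the argument since the hypothesis threshold coincides with the true $f(x_0)M^{\alpha}$.
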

\begin{proof}
By $\eqref{4.1}$, we get $$E_1\geq\frac{1}{2}\int|\nabla
u|^2\,dx-\frac{|\lambda_1|}{p+2}C_R\parallel \nabla
u\parallel_{L^2}^{\frac{np}{2}}\parallel
u\parallel_{L^2}^{\frac{4-(n-2)p}{2}}.$$ Let
$$f(x)=\frac{1}{2}x-\frac{|\lambda_1|}{p+2}C_R\parallel
u\parallel_{L^2}^{\frac{4-(n-2)p}{2}}x^{\frac{np}{4}}$$ and
$a=\int|\nabla u|^2\,dx$. Note that $$\hspace{1cm}f'(x)=0\
\Leftrightarrow\ x=|\lambda_1|^{\frac{4}{4-np}}\parallel
u\parallel_{L^2}^{-\frac{2[4-(n-2)p]}{np-4}}\parallel \nabla
R\parallel_{L^2}^{\frac{4p}{np-4}}:=x_0,$$ and,
 \begin{eqnarray*}
&&f'(x)>0\quad \mbox{for\ }x<x_0,\\
&&f(0)=0,f(x_0)=(\frac{1}{2}-\frac{2}{np})|\lambda_1|^{\frac{4}{4-np}}\left(\frac{2np}{np-4}\right)^{\frac{4-(n-2)p}{np-4}}
\left(\parallel
u\parallel_{L^2}^2\right)^{-\frac{4-(n-2)p}{np-4}}\left(\tilde{E}(R)\right)^\frac{2p}{np-4},
 \end{eqnarray*}
 using the fact that
 $a\in[0,x_0)$, and the condition $E_1\leq(1-\delta_0)f(x_0)$, we can get that there
 exists
 $\bar{\delta}=\bar{\delta}(\delta_0,n)$ such that
 \begin{eqnarray*}
a\leq(1-\bar{\delta})x_0\qquad\mbox{and}\qquad E_1\geq f(a)\geq0.
\end{eqnarray*}
\end{proof}
\par
Let's define $E_2:=\frac{1}{2}\int|\nabla
v|^2\,dx-\frac{|\lambda_2|}{4}\int\left(|x|^{-\gamma}\ast|v|^2\right)|v|^2\,dx$,
where $\lambda_2$ is the constant in \eqref{1}. The same result can
be gotten for $W(x)$:
\begin{lemma}\label{lemma4.2}
Assume that
\begin{eqnarray*}
\parallel \nabla v\parallel_{L^2}^{2}\left(\parallel
v\parallel_{L^2}^2\right)^\frac{4-\gamma}{\gamma-2}&<&\left(\frac{\parallel\nabla W\parallel_{L^2}^2}{|\lambda_2|}\right)^{\frac{2}{\gamma-2}},\\
E_2\cdot\left(\parallel
v\parallel_{L^2}^2\right)^\frac{4-\gamma}{\gamma-2}&\leq&(1-\delta_0)(\frac{1}{2}-\frac{1}{\gamma})
\left[\frac{2\gamma\tilde{E}(W)}{|\lambda_2|(\gamma-2)}\right]^{\frac{2}{\gamma-2}},
\end{eqnarray*}where $\delta_0>0$.
Then, when $2<\gamma\leq4$, there exists a
$\bar{\delta}=\bar{\delta}(\delta_0,n)>0$ such that
\begin{eqnarray*}
&&\parallel \nabla v\parallel_{L^2}^{2}\left(\parallel
v\parallel_{L^2}^2\right)^\frac{4-\gamma}{\gamma-2}\leq(1-\bar{\delta})
\left(\frac{\parallel\nabla W\parallel_{L^2}^2}{|\lambda_2|}\right)^{\frac{2}{\gamma-2}}\\
&&\hspace{-2cm}\mbox{and}\qquad\qquad E_2\geq0.
\end{eqnarray*}
\end{lemma}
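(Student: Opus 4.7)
The plan is to mirror the proof of Lemma 4.1 verbatim, substituting the sharp Hartree-type ground-state inequality \eqref{4.2} for the sharp Gagliardo--Nirenberg inequality \eqref{4.1}. First I would apply \eqref{4.2} to the nonlocal term in $E_2$ to obtain
\[
E_2 \geq \tfrac{1}{2}\int|\nabla v|^2\,dx - \tfrac{|\lambda_2|}{4}\, C_W\, \parallel\nabla v\parallel_{L^2}^{\gamma}\, \parallel v\parallel_{L^2}^{4-\gamma},
\]
and then, in parallel with the argument for $R$, set $a := \int|\nabla v|^2\,dx$ together with
\[
f(x) := \tfrac{1}{2}x - \tfrac{|\lambda_2|}{4}\, C_W\, \parallel v\parallel_{L^2}^{4-\gamma}\, x^{\gamma/2}.
\]
Since $\gamma>2$ the exponent $\gamma/2$ exceeds $1$, so $f$ is strictly concave on $(0,\infty)$, vanishes at $0$, and attains its unique critical point at some $x_0>0$.

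A direct computation using $C_W = \tfrac{4}{\gamma}\parallel\nabla W\parallel_{L^2}^{-2}$ gives
\[
x_0 = \left(\frac{\parallel\nabla W\parallel_{L^2}^{2}}{|\lambda_2|}\right)^{\frac{2}{\gamma-2}}\parallel v\parallel_{L^2}^{-\frac{2(4-\gamma)}{\gamma-2}},\qquad f(x_0) = \left(\tfrac{1}{2}-\tfrac{1}{\gamma}\right)x_0 .
\]
Inserting the identity $\tilde{E}(W) = \tfrac{\gamma-2}{2\gamma}\parallel\nabla W\parallel_{L^2}^{2}$ one obtains
\[
f(x_0)\cdot\left(\parallel v\parallel_{L^2}^{2}\right)^{\frac{4-\gamma}{\gamma-2}} = \left(\tfrac{1}{2}-\tfrac{1}{\gamma}\right)\left[\frac{2\gamma\,\tilde{E}(W)}{|\lambda_2|(\gamma-2)}\right]^{\frac{2}{\gamma-2}},
\]
so the two hypotheses of the lemma read precisely $a < x_0$ and $E_2 \leq (1-\delta_0)\,f(x_0)$.

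Because $f$ is increasing on $[0,x_0]$ with $f(0)=0$, the inequality $a<x_0$ immediately yields $f(a)\geq 0$ and hence $E_2 \geq f(a) \geq 0$. For the quantitative gap I would rescale by setting $y=x/x_0$, so that $f(x_0 y) = x_0\, g(y)$ with $g(y):=\tfrac{y}{2}-\tfrac{y^{\gamma/2}}{\gamma}$, a fixed continuous, strictly increasing function on $[0,1]$ depending only on $\gamma$ (and hence only on $n$). The second hypothesis then becomes $g(a/x_0)\leq (1-\delta_0)\,g(1)$, and by continuity and strict monotonicity of $g$ on $[0,1]$ one produces $\bar{\delta}=\bar{\delta}(\delta_0,n)>0$ with $a/x_0 \leq 1-\bar{\delta}$, which is the desired bound.

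The only mild obstacle is the bookkeeping needed to verify that $x_0$ and $f(x_0)\cdot(\parallel v\parallel_{L^2}^{2})^{(4-\gamma)/(\gamma-2)}$ match the explicit expressions given in the hypotheses; once that algebra is in place, the remainder is an elementary one-variable optimization identical in structure to Lemma 4.1.
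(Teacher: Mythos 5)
Your proposal is correct and is exactly the argument the paper has in mind: the paper simply states that Lemma~\ref{lemma4.2} is obtained by ``the same'' one-variable optimization used for Lemma~4.1, with \eqref{4.2} and $C_W=\frac{4}{\gamma}\parallel\nabla W\parallel_{L^2}^{-2}$ in place of \eqref{4.1} and $C_R$, and your algebra (computing $x_0$, $f(x_0)=(\tfrac12-\tfrac1\gamma)x_0$, and matching the hypotheses to $a<x_0$ and $E_2\le(1-\delta_0)f(x_0)$) checks out. The rescaled function $g(y)=\tfrac{y}{2}-\tfrac{y^{\gamma/2}}{\gamma}$ makes the final continuity/monotonicity step explicit, which the paper leaves implicit in its Lemma~4.1 proof.
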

\subsection{Kinetic energy control}
\ \quad We'll get a prior control on the kinetic energy, which is
bounded for all time for which the solution is defined. More
precisely, the bound is only concerned with energy and mass, i.e.
\begin{eqnarray}
\parallel u(t)\parallel_{\dot{H}_x^1}\leq C(E,M).
\end{eqnarray}
We observe the energy
\[
E(u)=\frac{1}{2}\int|\nabla
u|^2\,dx+\frac{\lambda_1}{p+2}\int|u|^{p+2}\,dx+\frac{\lambda_2}{4}\int\left(|x|^{-\gamma}\ast|u|^2\right)|u|^2\,dx
\]
is conserved. Hence,\\
for the  case $\eqref{case 1}$, we obviously obtain
$\hspace{2cm}\parallel u(t)\parallel_{\dot{H}_x^1}\lesssim E.$\\
For the case $\eqref{case 2}$, from Parseval identity,
Hardy-Littlewood-Sobolev inequality and interpolation, we have
\begin{eqnarray}
\int\left(|x|^{-\gamma}\ast|u|^2\right)|u|^2\,dx:&=&\int\left(|\nabla|^{-(n-\gamma)}|u|^2\right)|u|^2\,dx
=\parallel|\nabla|^{-\frac{n-\gamma}{2}}|u|^2\parallel_{L^2}^2\nonumber\\
&\leq&\parallel u\parallel_{L^{\frac{4n}{2n-\gamma}}}^4\leq\parallel
u\parallel_{L^2}^{4(1-\frac{n+\gamma}{2n})}\parallel
u\parallel_{L^\frac{2n+2\gamma}{n}}^{\frac{2n+2\gamma}{n}}.
\end{eqnarray}
Based on the fact: {\it for any positive constants $a$, $\delta$,
and $p_1<p_2$, the following inequality
\begin{eqnarray}
a^{p_1+2}\leq C(\delta)a^2+\delta a^{p_2+2}\label{4.5}
\end{eqnarray} holds true,}
 we can get
\[
\parallel
u\parallel_{L^\frac{2n+2\gamma}{n}}^{\frac{2n+2\gamma}{n}}\leq
C(\delta)\parallel u\parallel_{L^2}^2+\delta\parallel
u\parallel_{L^{p+2}}^{p+2}
\] if $\frac{2n+2\gamma}{n}<p+2$, i.e. $\gamma<\frac{np}{2}$.
Then $$E(u)\geq\frac{1}{2}\int|\nabla
u|^2\,dx+\frac{\lambda_1}{p+2}\int|u|^{p+2}\,dx-C\parallel
u\parallel_{L^2}^{4(1-\frac{n+\gamma}{2n})}\delta\int|u|^{p+2}\,dx-C(M).$$
Let $\delta$ is small enough, we have $$\parallel
u(t)\parallel_{\dot{H}_x^1}\leq C(E,M).$$ If
$\gamma\geq\frac{np}{2}$, by using $\lambda_1>0$ and $\eqref{4.2}$,
we can obtain $$E\geq E_1\geq\frac{1}{2}\int|\nabla
u|^2\,dx-\frac{|\lambda_2|}{4}C_W\parallel \nabla
u\parallel_{L^2}^\gamma\parallel u\parallel_{L^2}^{4-\gamma}.$$ For
the case $\gamma<2$, from Young's inequality, one has
\[
E\geq\frac{1}{2}\int|\nabla u|^2\,dx-\frac{|\lambda_2|}{4}\delta
C_W\parallel \nabla
u\parallel_{L^2}^2-\frac{|\lambda_2|}{4}C_WC(\delta)\parallel
u\parallel_{L^2}^{\frac{2(4-\gamma)}{2-\gamma}}.
\]
Let $\delta$ be small enough, we obtain $$\hspace{1cm}\parallel
u(t)\parallel_{\dot{H}_x^1}\leq C(E,M).$$ When $\gamma=2$, we have
$$\ E\geq\left(\frac{1}{2}-\frac{|\lambda_2|}{4}C_W\parallel
u\parallel_{L^2}^2\right)\parallel \nabla u\parallel_{L^2}^2.$$ If
$$\parallel
u\parallel_{L^2}^2<\frac{2}{C_W|\lambda_2|}=\frac{1}{|\lambda_2|}\parallel
W\parallel_{L^2}^2$$ holds true, we can obtain $$\parallel
u(t)\parallel_{\dot{H}_x^1}\leq C(E,M).$$ For the case $2<\gamma\leq
4$, by using Lemma $\ref{lemma4.2}$ and the conservation of energy
and mass, we only need to show when $$\parallel \nabla
u_0\parallel_{L^2}^{2}\left(\parallel
u_0\parallel_{L^2}^2\right)^\frac{4-\gamma}{\gamma-2}<\left(\frac{\parallel\nabla
W\parallel_{L^2}^2}{|\lambda_2|}\right)^{\frac{2}{\gamma-2}},$$ we
can get $$\parallel \nabla u\parallel_{L^2}^{2}\left(\parallel
u\parallel_{L^2}^2\right)^\frac{4-\gamma}{\gamma-2}<\left(\frac{\parallel\nabla
W\parallel_{L^2}^2}{|\lambda_2|}\right)^{\frac{2}{\gamma-2}}.$$ We
prove it by the continuity argument. Define
\begin{eqnarray*}
\Omega&=&\left\{t\in I,\parallel \nabla
u\parallel_{L^2}^{2}\left(\parallel
u\parallel_{L^2}^2\right)^\frac{4-\gamma}{\gamma-2}<\left(\frac{\parallel\nabla
W\parallel_{L^2}^2}{|\lambda_2|}\right)^{\frac{2}{\gamma-2}},\right.\\
&& \left.E\left(\parallel
u\parallel_{L^2}^2\right)^\frac{4-\gamma}{\gamma-2}\leq(1-\delta_0)(\frac{1}{2}-\frac{1}{\gamma})
\left[\frac{2\gamma\tilde{E}(W)}{|\lambda_2|(\gamma-2)}\right]^{\frac{2}{\gamma-2}}
\right\}.
\end{eqnarray*}
It suffices to show $\Omega$ is both open and closed. Note that
$t_0\in \Omega$, the open of $\Omega$ is obvious because of $u\in
C_t^0(I,\dot{H}_x^1)$. Therefore, we only need to prove $\Omega$ is
closed. For any $t_n\in\Omega,\ T\in I$, such that $t_n\rightarrow
T$, we have
\begin{eqnarray*}
\parallel \nabla
u(t_n)\parallel_{L^2}^{2}M^{\frac{4-\gamma}{\gamma-2}}&<&\left(\frac{\parallel\nabla
W\parallel_{L^2}^2}{|\lambda_2|}\right)^{\frac{2}{\gamma-2}},\\
E\left(u(t_n)\right)M^{\frac{4-\gamma}{\gamma-2}}&\leq&(1-\delta_0)(\frac{1}{2}-\frac{1}{\gamma})\left[\frac{2\gamma\tilde{E}(W)}{|\lambda_2|(\gamma-2)}\right]^{\frac{2}{\gamma-2}}
.\end{eqnarray*} By using Lemma $\ref{lemma4.2}$, we can get
\[
\parallel \nabla
u(t_n)\parallel_{L^2}^{2}M^{\frac{4-\gamma}{\gamma-2}}\leq(1-\bar{\delta})\left(\frac{\parallel\nabla
W\parallel_{L^2}^2}{|\lambda_2|}\right)^{\frac{2}{\gamma-2}}.
\]
Since $u\in C_t^0(I,\dot{H}_x^1)$, the conservation of energy and
mass, we get
\begin{eqnarray*}
\parallel \nabla
u(T)\parallel_{L^2}^{2}M^{\frac{4-\gamma}{\gamma-2}}&\leq&(1-\bar{\delta})\left(\frac{\parallel\nabla
W\parallel_{L^2}^2}{|\lambda_2|}\right)^{\frac{2}{\gamma-2}},\\
E\left(u(T)\right)M^{\frac{4-\gamma}{\gamma-2}}&\leq&(1-\delta_0)(\frac{1}{2}-\frac{1}{\gamma})\left[\frac{2\gamma\tilde{E}(W)}{|\lambda_2|(\gamma-2)}\right]^{\frac{2}{\gamma-2}}.
\end{eqnarray*}
This implies that $T\in\Omega$ and  $\hspace{1cm}\parallel
u(t)\parallel_{\dot{H}_x^1}\leq C(E,M)$.\\
\begin{remark}
When $\gamma=\frac{np}{2}$, we have
\begin{eqnarray*}
E\geq\frac{1}{2}\int|\nabla
u|^2\,dx+\frac{|\lambda_1|}{p+2}\parallel
u\parallel_{L^{p+2}}^{p+2}-C\frac{|\lambda_2|}{4}M^\frac{2-p}{2}\parallel
u\parallel_{L^{p+2}}^{p+2}.
\end{eqnarray*}
The condition $n>\gamma=\frac{np}{2}$ implies  $p<2$. If requiring
$\frac{|\lambda_1|}{p+2}>C\frac{|\lambda_2|}{4}M^\frac{2-p}{2}$,
i.e.
$M<\left(\frac{4|\lambda_1|}{(p+2)C|\lambda_2|}\right)^{\frac{2}{2-p}}$,
we also can obtain $\hspace{1cm}\parallel
u(t)\parallel_{\dot{H}_x^1}\leq C(E,M)$.
\end{remark}
To prove the case $\ref{case 3}$,  we need the following lemma
before getting the prior control on the kinetic energy:
\begin{lemma}\label{lemma4.3}
\begin{equation}\label{4.6}
\parallel
|\nabla|^{-\frac{n-\gamma}{4}} f\parallel _{L^4}\lesssim
\parallel
|\nabla|^{-\frac{n-\gamma}{2}}|f|^2\parallel_{L^2}^{\frac{1}{2}}.
\end{equation}
\end{lemma}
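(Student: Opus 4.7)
The target inequality is scale-invariant: with $\alpha:=(n-\gamma)/4$, both sides transform as $\lambda^{-(\alpha+n/4)}$ under $f\mapsto f(\lambda\cdot)$. My plan is to square the inequality to the equivalent form
\[\||\nabla|^{-\alpha}f\|_{L^4}^2\lesssim\||\nabla|^{-2\alpha}|f|^2\|_{L^2}\]
and then argue via Fourier analysis.

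First, I would unfold the right-hand side: by Plancherel and the Riesz semigroup identity $I_{2\alpha}\circ I_{2\alpha}=I_{4\alpha}$, together with $n-4\alpha=\gamma$,
\[\||\nabla|^{-2\alpha}|f|^2\|_{L^2}^2=c\iint\frac{|f(x)|^2|f(y)|^2}{|x-y|^\gamma}\,dx\,dy.\]
The squared target thus becomes the Hartree-type bound $\int|I_\alpha f|^4\,dx\lesssim\iint|f(x)|^2|f(y)|^2|x-y|^{-\gamma}\,dx\,dy$. By Parseval, the LHS equals $c\int|\widehat{|I_\alpha f|^2}(\xi)|^2\,d\xi$, where $\widehat{|I_\alpha f|^2}(\xi)=\int|\eta|^{-\alpha}|\xi-\eta|^{-\alpha}\hat{f}(\eta)\overline{\hat{f}(\eta-\xi)}\,d\eta$, while the RHS equals $c\int|\xi|^{-4\alpha}|\widehat{|f|^2}(\xi)|^2\,d\xi$ with $\widehat{|f|^2}(\xi)=\int\hat{f}(\eta)\overline{\hat{f}(\eta-\xi)}\,d\eta$.

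The crux is the bilinear Fourier estimate comparing these two weighted convolutions, and this is where the real work lies. A naive pointwise comparison $|\eta|^{-\alpha}|\xi-\eta|^{-\alpha}\lesssim|\xi|^{-2\alpha}$ \emph{fails} whenever $|\eta|$ or $|\xi-\eta|$ is much smaller than $|\xi|$, so the argument must be done integrally. I would apply AM-GM, $|\eta|^{-\alpha}|\xi-\eta|^{-\alpha}\le\tfrac12(|\eta|^{-2\alpha}+|\xi-\eta|^{-2\alpha})$, and exploit the $\eta\leftrightarrow\xi-\eta$ symmetry to reduce the LHS integrand to a convolution of the form $(|\cdot|^{-2\alpha}|\hat{f}|)\ast|\hat{f}|$, and then match its $L^2_\xi$-norm against $\||\xi|^{-2\alpha}(|\hat{f}|\ast|\hat{f}|)\|_{L^2_\xi}$ via a careful application of the sharp Hardy-Littlewood-Sobolev inequality. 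Equivalently, inverting back to physical space via Plancherel, the problem becomes controlling $\|(I_{2\alpha}g)\cdot g\|_{L^2}$ by $\|I_{2\alpha}(g^2)\|_{L^2}$ for $g\ge 0$. The main obstacle is this final matching step: one must show that pulling the $|\xi|^{-2\alpha}$ weight outside the convolution loses at worst a constant factor. The scale-invariant choice $\alpha=(n-\gamma)/4$ is precisely what makes the relevant HLS exponents line up exactly.
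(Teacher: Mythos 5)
Your preliminary steps are sound (the scaling check, squaring, Plancherel, and the Riesz composition identity turning the right side into the Hartree double integral with exponent $\gamma=n-4\alpha$, and the correct observation that a pointwise comparison of the two Fourier multipliers fails), but the proof stops exactly at the decisive step, and the reduction you set up before that step does not close. To apply AM--GM you must first replace $\hat f$ by $G:=|\hat f|$, and your target after that is $\bigl\||\xi|^{-2\alpha}(|\hat f|\ast|\hat f|)\bigr\|_{L^2_\xi}$, i.e.\ $\|I_{2\alpha}(|g|^2)\|_{L^2}$ for the function $g$ with $\hat g=|\hat f|$ (note it is $\hat g$, not $g$, that is nonnegative; your ``for $g\ge0$'' conflates the Fourier and physical sides, suggesting $g=|f|$, which is not what the AM--GM step produces). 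This is not the right-hand side of the lemma: one only has $|\widehat{|f|^2}(\xi)|\le(|\hat f|\ast|\hat f|)(\xi)$, which is the harmful direction, and the discrepancy can be arbitrarily large --- e.g.\ if $\hat f=\sum_k\epsilon_k\psi(\cdot-\xi_k)$ with random signs $\epsilon_k$, then stripping the signs (passing to $|\hat f|$) turns a spread-out $|f|$ into a Dirichlet-kernel-type peak for $g$, inflating $\|I_{2\alpha}(|g|^2)\|_{L^2}$ by a positive power of the number of bumps while $\|I_{2\alpha}(|f|^2)\|_{L^2}$ stays put. So even if your weight-pulling claim were true, the chain would prove $\||\nabla|^{-\alpha}f\|_{L^4}^2\lesssim\|I_{2\alpha}(|g|^2)\|_{L^2}$, which does not imply the lemma.

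Second, the core claim itself --- that $\int|\eta|^{-2\alpha}|\hat f(\eta)||\hat f(\eta-\xi)|\,d\eta$ is dominated in $L^2_\xi$ by $|\xi|^{-2\alpha}\int|\hat f(\eta)||\hat f(\eta-\xi)|\,d\eta$ --- is the entire difficulty restated, and ``a careful application of the sharp HLS inequality'' is not an argument: HLS bounds such bilinear expressions by unweighted Lebesgue norms of $\hat f$ and cannot by itself manufacture the weighted benchmark; one needs orthogonality across the regimes $|\eta|\ll|\xi|$, $|\eta|\sim|\xi|$, $|\eta|\gg|\xi|$, which is exactly where the paper (following Tao) does the real work. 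The paper proves the pointwise square-function bound $S\bigl(|\nabla|^{-\frac{n-\gamma}{4}}f\bigr)(x)\lesssim\bigl[(|\nabla|^{-\frac{n-\gamma}{2}}|f|^2)(x)\bigr]^{1/2}$ by estimating the kernel of each dyadic piece $P_N|\nabla|^{-\frac{n-\gamma}{4}}$, splitting $|y|\le N^{-1}$ from $|y|>N^{-1}$, applying Cauchy--Schwarz, and summing over $N$ to reassemble the kernel $|y|^{-\frac{n+\gamma}{2}}$; then $\|h\|_{L^4}\lesssim\|Sh\|_{L^4}$ concludes. That route works directly with $|f|^2$, so the modulus issue above never arises. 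To rescue your plan you would need both an actual proof of the bilinear comparison (most plausibly by the same dyadic decomposition, at which point you have reproduced the paper's argument in Fourier dress) and a formulation that never replaces $\widehat{|f|^2}$ by $|\hat f|\ast|\hat f|$ on the right.
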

\begin{remark}
T. Tao proved the inequality for $\gamma=3$ in \cite{24}. We can use
the same method to get \eqref{4.6}.
\end{remark}
\begin{proof}
It suffices to prove \eqref{4.6} for a positive Schwartz function
$f$. In fact, we only need to prove the pointwise inequality
\begin{equation}\label{4.7}
S(|\nabla|^{-\frac{n-\gamma}{4}}
f)(x)\lesssim\left[(|\nabla|^{-\frac{n-\gamma}{2}}|f|^2)(x)\right]^{\frac{1}{2}},
\end{equation}
where $Sf:=(\sum_N|P_Nf|^2)^\frac{1}{2}$. \\
Obviously, \eqref{4.7} implies \eqref{4.6}.
\[
\parallel
|\nabla|^{-\frac{n-\gamma}{4}} f\parallel _{L^4}\lesssim\parallel
S(|\nabla|^{-\frac{n-\gamma}{4}} f)\parallel _{L^4}\lesssim\parallel
(|\nabla|^{-\frac{n-\gamma}{2}}|f|^2)^{\frac{1}{2}}\parallel_{L^4}\lesssim\parallel
|\nabla|^{-\frac{n-\gamma}{2}}|f|^2\parallel_{L^2}^{\frac{1}{2}}.
\]
Subsequently, we'll focus our attention to the estimate for each of
the dyadic pieces
\[
P_N(|\nabla|^{-\frac{n-\gamma}{4}} f)(x)=\int e^{2\pi ix\cdot\,\xi}
\hat{f}(\xi)|\xi|^{-\frac{n-\gamma}{4}}m(\frac{\xi}{N})\,d\xi,
\]
where $m(\xi):=\varphi(\xi)-\varphi(2\xi)$ in the notation
introduced in Section 2.\\
As $|\xi|^{-\frac{n-\gamma}{4}}m(\frac{\xi}{N})\thicksim
N^{-\frac{n-\gamma}{4}}m(\frac{\xi}{N})$, we have
\[
P_N(|\nabla|^{-\frac{n-\gamma}{4}} f)(x)\thicksim
N^{\frac{3n+\gamma}{4}}f\ast
\check{m}(Nx)=N^{\frac{3n+\gamma}{4}}\int f(x-y)\check{m}(Ny)\,dy.
\]
Since $m$ is a Schwartz function, we have
\[
|P_N(|\nabla|^{-\frac{n-\gamma}{4}} f)(x)|\lesssim
N^{\frac{3n+\gamma}{4}}\int_{|y|\leq N^{-1}}
f(x-y)\,dy+N^{\frac{3n+\gamma}{4}}\int_{|y|>
N^{-1}}f(x-y)\frac{1}{|Ny|^\beta}\,dy,
\]
where $\beta$ is chosen later.\\
A simple application of Cauchy-Schwartz yields
\begin{eqnarray*}
S(|\nabla|^{-\frac{n-\gamma}{4}}
f)(x)&=&\left(\sum_N|P_N(|\nabla|^{-\frac{n-\gamma}{4}}
f)(x)|^2\right)^\frac{1}{2}\\
&\lesssim&\left(\sum_NN^{\frac{3n+\gamma}{2}}|\int_{|y|\leq N^{-1}}
f(x-y)\,dy|^2+\sum_NN^{\frac{3n+\gamma}{2}}|\int_{|y|>N^{-1}}
f(x-y)\frac{1}{|Ny|^\beta}\,dy|^2\right)^\frac{1}{2}\\
&\lesssim&\left[\sum_NN^{\frac{3n+\gamma}{2}}N^{-n}\int_{|y|\leq
N^{-1}}|f(x-y)|^2\,dy\right.\\
& & \left.+\sum_NN^{\frac{3n+\gamma}{2}}\left(\int_{|y|>
N^{-1}}\frac{|f(x-y)|^2}{|y|^\alpha}\,dy\right)\left(\int_{|y|>
N^{-1}}\frac{|y|^\alpha}{|Ny|^{2\beta}}\,dy\right)\right]^\frac{1}{2},
\end{eqnarray*}
where $\alpha$ is decided later.\\
Note that
\begin{eqnarray*}
\sum_NN^{\frac{n+\gamma}{2}}\chi_{\{|y|\leq
N^{-1}\}}(y)&\lesssim&\sum\limits_{|y|\leq
N^{-1}}N^{\frac{n+\gamma}{2}}\\
&\lesssim&|y|^{-\frac{n+\gamma}{2}}\\
\sum_NN^{\frac{3n+\gamma}{2}}\left(\int_{|y|>
N^{-1}}\frac{|y|^\alpha}{|Ny|^{2\beta}}\,dy\right)\chi_{\{|y|>
N^{-1}\}}(y)&\lesssim&\sum\limits_{|y|>
N^{-1}}N^{\frac{3n+\gamma}{2}}N^{-2\beta}N^{-(n+\alpha-2\beta)}\\&\lesssim&|y|^{\alpha-\frac{n+\gamma}{2}},
\end{eqnarray*}
where choosing $\alpha$ and $\beta$ to satisfy that
$n+\alpha-2\beta<0,\ \frac{\gamma+n}{2}-\alpha<0$,\\
 we obtain
\begin{eqnarray*}
S(|\nabla|^{-\frac{n-\gamma}{4}} f)(x)&\lesssim&\left(\int_{|y|\leq
N^{-1}}\frac{|f(x-y)|^2}{|y|^\frac{n+\gamma}{2}}\,dy+\int_{|y|>
N^{-1}}\frac{|f(x-y)|^2}{|y|^\frac{n+\gamma}{2}}\,dy\right)^\frac{1}{2}\\
&\thicksim&\left(\int\frac{|f(x-y)|^2}{|y|^\frac{n+\gamma}{2}}\,dy\right)^{\frac{1}{2}}\\
&\thicksim&
\left[(|\nabla|^{-\frac{n-\gamma}{2}}|f|^2)(x)\right]^{\frac{1}{2}},
\end{eqnarray*}
and we complete the proof.
\end{proof}\par
Using interpolation and Young's inequality, we get
\[
\parallel u\parallel_{L^q}^q\lesssim\parallel \nabla
u\parallel_{L^2}^\frac{2(n-\gamma)}{2+n-\gamma}\parallel
|\nabla|^{-\frac{n-\gamma}{4}} u\parallel
_{L^4}^\frac{8}{2+n-\gamma}\lesssim\varepsilon\parallel \nabla
u\parallel_{L^2}^2+C(\varepsilon)\parallel
|\nabla|^{-\frac{n-\gamma}{4}} u\parallel _{L^4}^4,
\]
where $q=\frac{2(4+n-\gamma)}{2+n-\gamma}$. Then,
\[
\parallel
|\nabla|^{-\frac{n-\gamma}{4}} u\parallel _{L^4}^4\geq
c(\varepsilon)\parallel u\parallel_{L^q}^q-c(\varepsilon)\parallel
\nabla u\parallel_{L^2}^2.
\]
On the other hand, In view of
\[
\frac{|\lambda_2|}{4}\parallel
|\nabla|^{-\frac{n-\gamma}{2}}|u|^2\parallel_{L^2}^2\gtrsim\parallel
|\nabla|^{-\frac{n-\gamma}{4}} u\parallel _{L^4}^4\geq
c(\varepsilon)\parallel u\parallel_{L^q}^q-c(\varepsilon)\parallel
\nabla u\parallel_{L^2}^2
\] from \eqref{4.6},
and
\[
\parallel u\parallel_{L^{p+2}}^{p+2}\leq C(\delta)\parallel
u\parallel_{L^2}^2+\delta\parallel u\parallel_{L^q}^q
\] from \eqref{4.5},
We have
\[
E\geq\frac{1}{2}\int|\nabla u|^2\,dx+c(\varepsilon)\parallel
u\parallel_{L^q}^q-c(\varepsilon)\parallel \nabla
u\parallel_{L^2}^2-\frac{|\lambda_1|}{p+2}\delta\parallel
u\parallel_{L^q}^q-\frac{|\lambda_1|}{p+2}C(\delta)\parallel
u\parallel_{L^2}^2.
\]
Choosing $\varepsilon$ and $\delta=\delta(\varepsilon)$ be small
enough, we obtain $$\parallel u(t)\parallel_{\dot{H}_x^1}\leq
C(E,M).$$ If $p\geq\frac{4}{2+n-\gamma}$, notice $\lambda_2>0$ and
\eqref{4.1}, using the identical method which is applied for case
(2), under the conditions of case \eqref{case 3} we have
$$\parallel u(t)\parallel_{\dot{H}_x^1}\leq C(E,M).$$

For the case \eqref{case 4}, by using \eqref{4.1},\ \eqref{4.2} and
Young's inequality, we have
\begin{eqnarray*}
E&\geq&\frac{1}{2}\int|\nabla
u|^2\,dx-\frac{|\lambda_1|}{p+2}C_R\parallel
u\parallel_{L^2}^{\frac{4-(n-2)p}{2}}\parallel\nabla
u\parallel_{L^2}^{\frac{np}{2}}-\frac{|\lambda_2|}{4}C_W\parallel
u\parallel_{L^2}^{4-\gamma}\parallel\nabla
u\parallel_{L^2}^\gamma \\
&\geq&\frac{1}{2}\int|\nabla
u|^2\,dx-\frac{|\lambda_1|}{p+2}C_R\delta\parallel\nabla
u\parallel_{L^2}^2-\frac{|\lambda_2|}{4}C_W\delta\parallel\nabla
u\parallel_{L^2}^2-C(M).
\end{eqnarray*}
Chosen $\delta$ to be sufficiently small, we obtain $$\parallel
u(t)\parallel_{\dot{H}_x^1}\leq C(E,M).$$

\subsection{Global well-posedness}
\ \quad In this subsection, we'll complete the proof of Theorem
\ref{1.1}. As mentioned above, when both nonlinearities are
$\dot{H}_x^1$-subcritical, according to the Proposition
$\ref{pro3.1}$, the prior control on the kinetic and the
conservation of mass, we can conclude the unique strong solution $u$
to \eqref{1} is a global solution. More precisely, in this
situation, we can find $T=T(\parallel u_0\parallel_{H^1_x})$ such
that \eqref{1} admits a unique strong solution $u\in
S^1([-T,T]\times\mathbb{R}^n)$ and
\[
\parallel u\parallel_{S^1([-T,T]\times\mathbb{R}^n)}\leq C(E,M).
\]
If we subdivide the interval $I$ into subintervals of length $T$,
deriving the corresponding $S^1-$bounds on each of these
subintervals, and at last summing these dominate together, then we
can get the bound \eqref{1.6}.\par
 When one of the nonlinearities is $\dot{H}_x^1$-critical, we view
 the other nonlinearity as a perturbation to the energy-critical
 $\textsl{NLS}$, which is globally wellposed, \cite{2,3,7,11,15,17}. Here we only discuss the case: $p=\frac{4}{n-2},\
 0<\gamma<\min{\{n,4\}}$, the same method can be used for the other
 case: $0<p<\frac{4}{n-2},\
 \gamma=4\ \mbox{with }n\geq 5$. Through the proof, we can find by
 Strichartz estimates and H\"older inequality, we need the
 coefficient of subcritical nonlinearity including $T=T(E,M)$, which
 will be required small in order to apply the standard continuity
 argument. So our approach don't fit the case that both
 nonlinearities are $\dot{H}_x^1$-critical.\par
   Let  $v$ be the unique strong global solution to the
   energy-critical equation \eqref{4} with initial datum $v_0=u_0$ at time
   $t=0$. By the main result in \cite{2,7,11,15,17}, we know that such a $v$ exists and

   \begin{equation} \label{4.8}
   \parallel
   v\parallel_{\dot{S}^1(\mathbb{R}\times\mathbb{R}^n)}\leq C(\parallel
   u_0\parallel_{\dot{H}_x^1}).
   \end{equation}
 Furthermore, by Lemma $\ref{lemma 3.6}$, we also have
 \[
   \parallel
 v\parallel_{\dot{S}^0(\mathbb{R}\times\mathbb{R}^n)}\leq C(\parallel u_0\parallel_{\dot{H}_x^1})\parallel
 u_0\parallel_{L^2}\leq C(E,M).
 \]
 By time reversal symmetry, it suffices to solves the problem
 forward in time. By \eqref{4.8}, split $\mathbb{R}^+$ into
 $J=J(E,\eta)$ subintervals $I_j=[t_j,t_{j+1}]$ such that
 \begin{equation}\label{4.9}
  \parallel v\parallel_{\dot{B}^1(I_j)}\thicksim\eta
 \end{equation}
 for some small $\eta$ to be chosen later.\par
 We may assume that there exits $J\,'<J$ such that for any $0\leq j\leq J\,'-1,\ [0,T]\cap
 I_j\neq\emptyset$.\\
 Thus, we can write$\hspace{2cm}[0,T]=\bigcup\limits_{j=0}^{J'-1}\left([0,T]\cap
 I_j\right)$.\\
 According to the Strichartz estimate, Sobolev embedding and \eqref{4.9}, we have
 the free evolution $e^{i(t-t_j)\Delta}v(t_j)$ is small on $I_j$
 \begin{eqnarray*}
   \parallel e^{i(t-t_j)\Delta}v(t_j)\parallel_{\dot{B}^1(I_j)}&\leq&\parallel
   v\parallel_{\dot{B}^1(I_j)}+\parallel\nabla\left(|v|^{\frac{4}{n-2}}v\right)\parallel_{L_{t,x}^\frac{2(n+2)}{n+4}(I_j\times\mathbb{R}^n)}\\
   &\leq& \parallel
   v\parallel_{\dot{B}^1(I_j)}+C\parallel
   v\parallel_{\dot{X}^1(I_j)}^{\frac{n+2}{n-2}}\\
   &\leq&\parallel
   v\parallel_{\dot{B}^1(I_j)}+C\parallel
   v\parallel_{\dot{B}^1(I_j)}^{\frac{n+2}{n-2}}\\
   &\leq&\eta+C\eta^{\frac{n+2}{n-2}}.
 \end{eqnarray*}
Thus, taking $\eta$ sufficiently small, for any $0\leq j\leq
J\,'-1$, we obtain
\[
\parallel
e^{i(t-t_j)\Delta}v(t_j)\parallel_{\dot{B}^1(I_j)}\leq 2\eta
\]
On the interval $I_0$, recalling that $u(0)=v(0)=u_0$, we estimate
\begin{eqnarray*}
\parallel u\parallel_{\dot{X}^1(I_0)}&\leq&\parallel
e^{it\Delta}u_0\parallel_{\dot{X}^1(I_0)}+C|I_0|^\alpha\parallel
u\parallel_{\dot{Y}^1(I_0)}^3+C\parallel
   u\parallel_{\dot{X}^1(I_0)}^{\frac{n+2}{n-2}},\\
\parallel u\parallel_{\dot{Y}^1(I_0)}&\leq&\parallel
e^{it\Delta}u_0\parallel_{\dot{Y}^1(I_0)}+C|I_0|^\alpha\parallel
u\parallel_{\dot{Y}^1(I_0)}^3+C\parallel
   u\parallel_{\dot{X}^1(I_0)}^{\frac{n+2}{n-2}},
\end{eqnarray*}
then \begin{eqnarray*}\parallel
u\parallel_{\dot{B}^1(I_0)}&\leq&\parallel
e^{it\Delta}u_0\parallel_{\dot{B}^1(I_0)}+CT^\alpha\parallel
u\parallel_{\dot{B}^1(I_0)}^3+C\parallel
   u\parallel_{\dot{B}^1(I_0)}^{\frac{n+2}{n-2}}\\
   &\leq&2\eta+CT^\alpha\parallel
u\parallel_{\dot{B}^1(I_0)}^3+C\parallel
   u\parallel_{\dot{B}^1(I_0)}^{\frac{n+2}{n-2}},
   \end{eqnarray*}
where $\alpha=\min{\{1,\ 2-\frac{\gamma}{2}\}}$.
\par
 Assuming $\eta$ and T are sufficiently small, a standard continuity
 argument then yields
 \[
 \parallel
u\parallel_{\dot{B}^1(I_0)}\leq4\eta.
 \]
In order to use Lemma $\ref{lemma3.4}$, we notice that \eqref{3.31}
holds on $I:=I_0$ for $M_0:=4C\eta$, \eqref{3.29} holds for
$E_0:=C(E,M)$. Also, \eqref{3.30} holds with $E'_0=0$. We only prove
that the error, which in this case is the second nonlinearity, is
sufficiently small.\\
In fact
\[
\parallel\nabla
e\parallel_{\dot{N}^0(I_0\times\mathbb{R}^n)}\lesssim
T^\alpha\parallel u\parallel_{\dot{Y}^1(I_0)}^3\lesssim
T^\alpha\parallel u\parallel_{\dot{B}^1(I_0)}^3\lesssim
T^\alpha\eta^3.
\]
We see that by choosing T sufficiently small, we get
\[
\parallel\nabla
e\parallel_{\dot{N}^0(I_0\times\mathbb{R}^n)}<\epsilon,
\]
where $\epsilon=\epsilon(E,M)$ is a small constant to be chosen
later. Thus, taking $\epsilon$ sufficiently small, the hypothesis of
Lemma $\ref{lemma3.4}$ are satisfied, which implies that
\begin{equation}\label{4.10}
\parallel u-v\parallel_{\dot{S}^1(I_0\times\mathbb{R}^n)}\leq
C(E,M)\epsilon^c
\end{equation}
for a small positive constant c which depends only on the dimension
$n$.\\
 Strichartz estimates and \eqref{4.10} imply
\begin{eqnarray}
\parallel u(t_1)-v(t_1)\parallel_{\dot{H}_x^1}&\leq&
C(E,M)\epsilon^c,\label{4.11}\\
\parallel e^{i(t-t_1)\Delta}\left(u(t_1)-v(t_1)\right)\parallel_{\dot{B}^1(I_1)}&\leq&
C(E,M)\epsilon^c.\label{4.12}
\end{eqnarray}
By using \eqref{4.11},\ \eqref{4.12} and Strichartz estimates, we
can get
\begin{eqnarray*}
\parallel u\parallel_{\dot{B}^1(I_1)}&\leq&\parallel
e^{i(t-t_1)\Delta}u(t_1)\parallel_{\dot{B}^1(I_1)}+CT^\alpha\parallel
u\parallel_{\dot{B}^1(I_1)}^3+C\parallel
u\parallel_{\dot{B}^1(I_1)}^\frac{n+2}{n-2}\\
&\leq&\parallel
e^{i(t-t_1)\Delta}v(t_1)\parallel_{\dot{B}^1(I_1)}+\parallel
e^{i(t-t_1)\Delta}\left(u(t_1)-v(t_1)\right)\parallel_{\dot{B}^1(I_1)}\\
&&+CT^\alpha\parallel u\parallel_{\dot{B}^1(I_1)}^3+C\parallel
u\parallel_{\dot{B}^1(I_1)}^\frac{n+2}{n-2}\\
&\leq&2\eta+C(E,M)\epsilon^c+CT^\alpha\parallel
u\parallel_{\dot{B}^1(I_1)}^3+C\parallel
u\parallel_{\dot{B}^1(I_1)}^\frac{n+2}{n-2}.
\end{eqnarray*}
A standard continuity method then yields
 \[
 \parallel
u\parallel_{\dot{B}^1(I_0)}\leq4\eta
 \]
provided $\epsilon$ is chosen sufficiently small depending on E and
M, which amounts to taking T sufficiently small depending on E and
M. We apply Lemma $\ref{lemma3.4}$ again on $I:=I_1$ to obtain
\[
\parallel u-v\parallel_{\dot{S}^1(I_1\times\mathbb{R}^n)}\leq
C(E,M)\epsilon^c.
\]
By induction argument, for every $0\leq j\leq J\,'-1$, we obtain
\begin{equation}\label{4.13}
\parallel
u\parallel_{\dot{B}^1(I_j)}\leq4\eta
\end{equation}
provided $\epsilon$ (and hence $T$) is sufficiently small depend on
$E$ and $M$. Adding \eqref{4.13} over all $0\leq j\leq J\,'-1$ and
recalling that $J\,'<J=J(E,M)$, we obtain
\begin{equation}
\parallel\label{4.14}
u\parallel_{\dot{B}^1([0,T])}\leq4J'\eta\leq C(E,M).
\end{equation}
Using Strichartz estimates, \eqref{2.4},\eqref{4.14} and $T=T(E,M)$,
we get
\begin{eqnarray}\label{4.15}
\parallel u\parallel_{\dot{S}^1([0,T]\times\mathbb{R}^n)}\lesssim
\parallel
u_0\parallel_{\dot{H}^1_x}+T^\alpha\parallel
u\parallel_{\dot{B}^1([0,T])}^3+\parallel
   u\parallel_{\dot{B}^1([0,T])}^{\frac{n+2}{n-2}}\leq C(E,M).
\end{eqnarray}
Similarly,
\begin{eqnarray*}
\parallel u\parallel_{\dot{S}^0([0,T]\times\mathbb{R}^n)}&\lesssim&
\parallel
u_0\parallel_{L^2_x}+T^\alpha\parallel
u\parallel_{\dot{B}^1([0,T])}^2\parallel
u\parallel_{\dot{Y}^0([0,T])}+\parallel
   u\parallel_{\dot{B}^1([0,T])}^{\frac{4}{n-2}}\parallel
u\parallel_{\dot{X}^0([0,T])}\\
&\lesssim&M^\frac{1}{2}+C(E,M)\parallel
u\parallel_{\dot{B}^1([0,T])}^2\parallel
u\parallel_{\dot{S}^0([0,T])}+\parallel
   u\parallel_{\dot{B}^1([0,T])}^{\frac{4}{n-2}}\parallel
u\parallel_{\dot{S}^0([0,T])}.
\end{eqnarray*}
Split $[0,T]$ into $N=N(E,M,\delta)$ subintervals $J_k$ such that
\begin{equation*}
  \parallel u\parallel_{\dot{B}^1(J_k)}\thicksim\delta
 \end{equation*}
for some small constant $\delta>0$ to be chosen later. Thus we get
\[
\parallel u\parallel_{\dot{S}^0(J_k\times\mathbb{R}^n)}\lesssim
M^\frac{1}{2}+C(E,M)\delta^2\parallel
u\parallel_{\dot{S}^0(J_k\times\mathbb{R}^n)}+\delta^{\frac{4}{n-2}}\parallel
u\parallel_{\dot{S}^0(J_k\times\mathbb{R}^n)}.
\]
Choosing $\delta$ sufficiently small, a standard continuity method
then implies
\[
\parallel
u\parallel_{\dot{S}^0(J_k\times\mathbb{R}^n)}\leq C(E,M).
\]
Adding these bounds over all subintervals $J_k$, we get
\begin{equation}\label{4.16}
\parallel u\parallel_{\dot{S}^0([0,T]\times\mathbb{R}^n)}\leq
C(E,M).
\end{equation}
Combine \eqref{4.15} and \eqref{4.16}, we get
\[
\parallel u\parallel_{S^1([0,T]\times\mathbb{R}^n)}\leq C(E,M).
\]
This completes the proof of Theorem \ref{1.1}.
 \section{Scattering results}
 \subsection{The interaction Morawetz inequality}
 \begin{prop}\label{5.1}(Morawetz control)
 Let $I$ be a compact interval, $\lambda_1$ and $\lambda_2$ are
 positive real numbers, and $u$ a solution to \eqref{1.1} on the
 slab $I\times\mathbb{R}^n$. Then
 \begin{equation}
\parallel u\parallel_{Z(I)}\lesssim\parallel
u\parallel_{L^\infty_tH_x^1(I\times\mathbb{R}^n)}.
 \end{equation}
 \end{prop}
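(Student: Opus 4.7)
The plan is to run an interaction Morawetz argument, exploiting crucially that both nonlinearities are defocusing ($\lambda_1,\lambda_2>0$), and then to pass from the resulting weighted spacetime $L^2$-bound on $|u|^2$ to the $Z(I)$-norm by Sobolev embedding and interpolation. The reason such an argument is available here, and not in the general $\lambda_1\cdot\lambda_2<0$ setting, is precisely that each nonlinear term contributes a Morawetz integrand of the correct sign.

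Introduce the interaction Morawetz functional
\[
M(t):=2\,\mathrm{Im}\int_{\mathbb{R}^n}\!\int_{\mathbb{R}^n}|u(t,y)|^2\,\frac{x-y}{|x-y|}\cdot\nabla_x u(t,x)\,\overline{u(t,x)}\,dy\,dx.
\]
Bounding the $y$-integral trivially by $\|u\|_{L^2_y}^2$ and Cauchy--Schwarz in $x$ produce the a priori estimate
\[
\sup_{t\in I}|M(t)|\lesssim \|u\|_{L^\infty_tL^2_x}^3\,\|u\|_{L^\infty_t\dot H^1_x}\lesssim \|u\|_{L^\infty_tH^1_x}^4.
\]
Differentiating $M$ along the flow of \eqref{1} yields a decomposition $\partial_tM=K(t)+N_1(t)+N_2(t)$, where $K$ is the free-Schr\"odinger contribution and $N_1,\,N_2$ come from $\lambda_1|u|^pu$ and $\lambda_2(|x|^{-\gamma}*|u|^2)u$ respectively. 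For $n\geq 3$ the kinetic piece obeys the pointwise-in-$t$ lower bound
\[
K(t)\gtrsim \iint_{\mathbb{R}^n\times\mathbb{R}^n}\frac{|u(t,x)|^2|u(t,y)|^2}{|x-y|^3}\,dx\,dy.
\]
After integration by parts and symmetrizing in $(x,y)$, the power contribution becomes $\tfrac{p\lambda_1}{p+2}\iint|u(y)|^2|u(x)|^{p+2}|x-y|^{-1}dx\,dy\geq 0$; the Hartree contribution, after a further integration by parts and symmetrizing in three spatial variables, reorganizes into a nonnegative multilinear expression in $|u|^2$ by convexity of the kernel $|x|^{-\gamma}$. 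Both $N_1,N_2\geq 0$ can therefore be discarded.

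Integrating $\partial_tM$ over $I$ and invoking the a priori bound on $|M|$ yields
\[
\int_I\iint\frac{|u(t,x)|^2|u(t,y)|^2}{|x-y|^3}\,dx\,dy\,dt\lesssim \|u\|_{L^\infty_tH^1_x}^4.
\]
By Plancherel, the left-hand side is a constant multiple of $\||u|^2\|_{L^2_t\dot H^{-(n-3)/2}_x}^2$ (treating $n=3$ directly as $\|u\|_{L^4_{t,x}}^4$); Sobolev embedding $\dot H^{-(n-3)/2}\supset L^{2n/(2n-3)}$ then delivers the endpoint
\[
\|u\|_{L^4_tL^{4n/(2n-3)}_x(I\times\mathbb{R}^n)}\lesssim \|u\|_{L^\infty_tH^1_x}.
\]
Interpolating this with the trivial Sobolev bound $\|u\|_{L^\infty_tL^{2n/(n-2)}_x}\lesssim \|u\|_{L^\infty_tH^1_x}$ at the parameter $\theta=4/(n+1)$ in time, a direct check of exponents matches the target pair $(n+1,\,2(n+1)/(n-1))$ exactly, so the interpolation gives $\|u\|_{Z(I)}\lesssim \|u\|_{L^\infty_tH^1_x}$ as claimed.

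The principal obstacle is the positivity of $N_2(t)$: the triple spatial integral that arises involves both the Morawetz weight $(x-y)/|x-y|$ and $\nabla(|x|^{-\gamma})$, and its nonnegativity is not manifest until one symmetrizes and uses convexity of $|x|^{-\gamma}$ (which is what forces $0<\gamma<n$ and, via the integration-by-parts structure, meshes with $\gamma\leq 4$). The kinetic lower bound and the power-nonlinearity calculation are standard and can be adapted directly from the cubic/defocusing NLS literature; once the sign of $N_2$ is in hand, the rest is routine Fourier and Sobolev analysis.
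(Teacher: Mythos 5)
Your Morawetz setup and the positivity of both nonlinear contributions are essentially the paper's argument: the power term gives $(n-1)\tfrac{\lambda_1 p}{p+2}\iint |u(y)|^2|u(x)|^{p+2}|x-y|^{-1}\,dx\,dy\ge 0$, and the Hartree term, after symmetrization, is nonnegative because $(a-b)\cdot\bigl(\tfrac{a}{|a|}-\tfrac{b}{|b|}\bigr)=(|a||b|-a\cdot b)\bigl(\tfrac1{|a|}+\tfrac1{|b|}\bigr)\ge 0$ (the mechanism is the radial decrease of $|x|^{-\gamma}$ and monotonicity of $x\mapsto x/|x|$, not convexity, and it has nothing to do with $\gamma\le 4$ — but that is only commentary). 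The genuine gap is in the bridge from the Morawetz output to a Lebesgue bound on $u$. From
\[
\int_I\iint\frac{|u(t,x)|^2|u(t,y)|^2}{|x-y|^{3}}\,dx\,dy\,dt\;\sim\;\bigl\||\nabla|^{-\frac{n-3}{2}}|u|^2\bigr\|_{L^2_{t,x}}^2\lesssim \|u\|_{L^\infty_tH^1_x}^4
\]
you invoke ``Sobolev embedding $\dot H^{-(n-3)/2}\supset L^{2n/(2n-3)}$'' to conclude $\|\,|u|^2\,\|_{L^2_tL^{2n/(2n-3)}_x}\lesssim\|u\|_{L^\infty_tH^1_x}^2$. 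That embedding goes the wrong way: $L^{2n/(2n-3)}\hookrightarrow\dot H^{-(n-3)/2}$ means $\|f\|_{\dot H^{-(n-3)/2}}\lesssim\|f\|_{L^{2n/(2n-3)}}$, so a bound on the negative-order Sobolev norm of $|u|^2$ gives no control whatsoever of its Lebesgue norm. Consequently your intermediate endpoint $\|u\|_{L^4_tL^{4n/(2n-3)}_x}\lesssim\|u\|_{L^\infty_tH^1_x}$ is not established (and is not obtainable by interpolation either, since keeping the time exponent at $4$ forces all the interpolation weight onto the Morawetz factor and none onto the energy).

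The missing ingredient is exactly the paper's Lemma 4.3 (the Tao--Visan--Zhang square-function estimate): a Littlewood--Paley/pointwise argument showing
\[
\bigl\||\nabla|^{-\frac{n-3}{4}}u\bigr\|_{L^4_x}\lesssim\bigl\||\nabla|^{-\frac{n-3}{2}}|u|^2\bigr\|_{L^2_x}^{1/2},
\]
which converts the Morawetz bound into $\||\nabla|^{-\frac{n-3}{4}}u\|_{L^4_{t,x}(I\times\mathbb{R}^n)}\lesssim\|u\|_{L^\infty_tH^1_x}$. One then interpolates this \emph{negative-regularity} estimate directly against the kinetic-energy bound $\|\nabla u\|_{L^\infty_tL^2_x}\lesssim E^{1/2}$ (with weight $\theta=\tfrac4{n+1}$, the derivative bookkeeping cancels and the exponents land exactly on $Z(I)=L^{n+1}_tL^{2(n+1)/(n-1)}_x$); your final interpolation numerology is in fact this computation, but it must be run against the energy, which supplies the positive derivatives that a chain of Lebesgue norms of $u$ alone cannot. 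The case $n=3$, where the kernel is a delta and one gets $\|u\|_{L^4_{t,x}}$ directly, you handle correctly.
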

 We will derive Proposition $5.1$ from the following:
 \begin{prop}(General interaction Morawetz inequality)
 \begin{eqnarray}
 &&-(n-1)\int_I\int_{\mathbb{R}^n}\int_{\mathbb{R}^n}\Delta(\frac{1}{|x-y|})|u(y)|^2|u(x)|^2\,dxdydt\nonumber\\
 &&
 +2\int_I\int_{\mathbb{R}^n}\int_{\mathbb{R}^n}|u(t,y)|^2\frac{x-y}{|x-y|}\{\textit{N},u\}_p(t,x)\,dxdydt\label{5.2}\\
 &\leq&4\parallel u\parallel_{L^\infty_tL_x^2(I\times\mathbb{R}^n)}^3\parallel\nabla u\parallel_{L^\infty_tL_x^2(I\times\mathbb{R}^n)}
 \nonumber\\
 &&+4\int_I\int_{\mathbb{R}^n}\int_{\mathbb{R}^n}|\{\textit{N},u\}_m(t,y)u(t,x)\nabla
 u(t,x)|\,dxdydt,\nonumber
 \end{eqnarray}
 where $\textit{N}:=\lambda_1|u|^pu+\lambda_2(|x|^{-\gamma}\ast
|u|^2)u,\ \{f,g\}_p:=Re(f\nabla\bar{g}-g\nabla\bar{f}),\
\{f,g\}_m=Im\{f\bar{g}\}$.
 \end{prop}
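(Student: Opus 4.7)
The plan is to introduce the interaction Morawetz action
\begin{equation*}
M(t) := 2\int_{\mathbb{R}^n}\int_{\mathbb{R}^n}|u(t,y)|^2\,\frac{x-y}{|x-y|}\cdot\mathrm{Im}\bigl(\bar u(t,x)\nabla u(t,x)\bigr)\,dxdy,
\end{equation*}
then to integrate $\tfrac{d}{dt}M(t)$ over $I$ via the fundamental theorem of calculus and match the resulting identity term-by-term against \eqref{5.2}. The a priori bound on the boundary contribution is immediate from Cauchy--Schwarz: $|M(t)|\le 2\|u(t)\|_{L_x^2}^{3}\|\nabla u(t)\|_{L_x^2}$, and bounding $|M(t_+)-M(t_-)|$ by $2\sup_{t\in I}|M(t)|$ yields the first term on the right-hand side of \eqref{5.2}.

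For $\tfrac{d}{dt}M(t)$, I would rely on the two pointwise local conservation laws satisfied by any solution of $iu_t+\Delta u=\textit{N}$, namely
\begin{align*}
\partial_t|u|^2 &= -2\nabla\!\cdot p + 2\{\textit{N},u\}_m,\\
\partial_t p_j &= -\partial_k\sigma_{jk} + \{\textit{N},u\}_{p,j},
\end{align*}
where $p=\mathrm{Im}(\bar u\nabla u)$ is the momentum density and $\sigma_{jk}=2\mathrm{Re}(\partial_j u\,\partial_k\bar u)-\tfrac12\delta_{jk}\Delta|u|^2$ is the usual stress tensor. Substituting these into $\tfrac{d}{dt}M$ and integrating by parts in $y$ against $\tfrac{x-y}{|x-y|}$ (for the mass identity) and in $x$ against the same kernel (for the momentum identity), I obtain a linear/kinetic part plus two source terms: the desired $|u(y)|^2\,\tfrac{x-y}{|x-y|}\cdot\{\textit{N},u\}_p(t,x)$ contribution and a cross term in which $\{\textit{N},u\}_m(t,y)$ multiplies $\bar u(t,x)\nabla u(t,x)$.

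The linear part is reorganized using $\partial_{x_j}\tfrac{x_j-y_j}{|x-y|}=\tfrac{n-1}{|x-y|}$ together with the Hessian identity $\partial_{x_j}\partial_{x_k}|x-y|=\tfrac{1}{|x-y|}\bigl(\delta_{jk}-\tfrac{(x-y)_j(x-y)_k}{|x-y|^2}\bigr)$. After the algebra, the two kinetic pieces combine into
\begin{equation*}
-(n-1)\!\int\!\!\int\Delta\!\left(\tfrac{1}{|x-y|}\right)|u(x)|^2|u(y)|^2\,dxdy\;+\;R(t),
\end{equation*}
where $R(t)\ge 0$ is an ``angular momentum squared'' remainder (the weighted $L^2$ norm of the component of $p(x)$ transverse to $x-y$), which can be discarded on the positive side. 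Integrating in $t$, moving the Laplacian term and the $\{\textit{N},u\}_p$ contribution to the left, and estimating the $\{\textit{N},u\}_m$ cross term by bringing absolute values inside then yields \eqref{5.2}.

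The principal obstacle is the tensor bookkeeping in the linear part: one must verify that the full contraction of $\sigma_{jk}$ against $\partial_{x_j}\partial_{x_k}|x-y|$, combined with the $\tfrac12\delta_{jk}\Delta|u(x)|^2$ piece (which after an extra integration by parts produces the Laplacian of the kernel), collapses exactly to $(n-1)\Delta(1/|x-y|)|u(x)|^2|u(y)|^2$ plus the non-negative remainder $R(t)$. A secondary subtlety is the distributional nature of $\Delta(1/|x-y|)$, which contains a $\delta$-mass when $n=3$; since the proposition assumes $n\ge 3$, one first derives the identity for Schwartz data (where both sides make classical sense) and then extends to $H^1_x$ by approximation using the continuous dependence provided by Proposition~\ref{pro3.1}.
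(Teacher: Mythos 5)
Your proposal is correct and follows essentially the same route as the paper, which gives no independent proof of this proposition but defers to \cite{24}, where the inequality is derived exactly as you describe: the interaction Morawetz action $M(t)$, the local mass and momentum conservation laws with the brackets $\{N,u\}_m$ and $\{N,u\}_p$, Cauchy--Schwarz on the boundary terms (giving the factor $4\parallel u\parallel_{L^\infty_tL^2_x}^3\parallel\nabla u\parallel_{L^\infty_tL^2_x}$), and discarding the nonnegative kinetic remainder after the tensor contraction produces $-(n-1)\Delta(1/|x-y|)$.
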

The proof can be found in \cite{24}.\par
 Note that, in  particular $\textit{N}:=\lambda_1|u|^pu+\lambda_2(|x|^{-\gamma}\ast
|u|^2)u$, we have \[\{\textit{N},u\}_m=0,\quad
\{\textit{N},u\}_p=-\frac{\lambda_1p}{p+2}\nabla(|u|^{p+2})-\lambda_2Re\left\{\nabla(|x|^{-\gamma}\ast|u|^2)|u|^2\right\}.\]
Next we'll show \eqref{5.2} is positive, then we obtain
\begin{equation}\label{5.3}
-\int_I\int_{\mathbb{R}^n}\int_{\mathbb{R}^n}\Delta(\frac{1}{|x-y|})|u(y)|^2|u(x)|^2\,dxdydt\leq\parallel
u\parallel_{L^\infty_tH_x^1(I\times\mathbb{R}^n)}^4.
\end{equation}
In dimension $n=3$, we have $-\Delta(\frac{1}{|x|})=4\pi\delta$, so
\eqref{5.3} yields
\[
\parallel u\parallel_{L_{t,x}^4(I\times\mathbb{R}^3)}^4\lesssim\parallel
u\parallel_{L^\infty_tH_x^1(I\times\mathbb{R}^3)}^4,
\]
which proves the Proposition $\ref{5.1}$.\par
 In dimension $n\geq4$, we have
 $-\Delta(\frac{1}{|x|})=\frac{n-3}{|x|^3}$, so \eqref{5.3}
 yields
 \begin{equation}\label{5.4}
 \parallel
 |\nabla|^{-\frac{n-3}{2}}|u|^2\parallel_{L_{t,x}^2(I\times\mathbb{R}^n)}\lesssim\parallel
u\parallel_{L^\infty_tH_x^1(I\times\mathbb{R}^n)}^2.
 \end{equation}
From Lemma $\ref{lemma4.3}$ and the above inequality, we have
\begin{equation}\label{5.5}
 \parallel
 |\nabla|^{-\frac{n-3}{4}}u\parallel_{L_{t,x}^4(I\times\mathbb{R}^n)}\lesssim\parallel
u\parallel_{L^\infty_tH_x^1(I\times\mathbb{R}^n)}.
\end{equation}
Proposition \eqref{5.1} follows by interpolation between \eqref{5.5}
and the bound on the kinetic energy
\[
\parallel\nabla u\parallel_{L^\infty_tL_x^2}\lesssim
E^\frac{1}{2},
\]
which is an immediate consequence of the conservation of energy when
both nonlinearities are defocusing. Note that
\begin{eqnarray*}
&&\int_I\int_{\mathbb{R}^n}\int_{\mathbb{R}^n}|u(t,y)|^2\frac{x-y}{|x-y|}\{\textit{N},u\}_p(t,x)\,dxdydt\\
&=&-\int_I\int_{\mathbb{R}^n}\int_{\mathbb{R}^n}|u(t,y)
|^2\frac{x-y}{|x-y|}\frac{\lambda_1p}{p+2}\nabla(|u|^{p+2})\,dxdydt\\
&&-\lambda_2Re\int_I\int_{\mathbb{R}^n}\int_{\mathbb{R}^n}|u(t,y)|^2\frac{x-y}{|x-y|}\left\{\nabla(|x|^{-\gamma}\ast|u|^2)|u|^2\right\}\,dxdydt\\
&=&(I)+(\amalg).
\end{eqnarray*}
For $(I)$, we have
\[
-\int_I\int_{\mathbb{R}^n}\int_{\mathbb{R}^n}|u(t,y)
|^2\frac{x-y}{|x-y|}\frac{\lambda_1p}{p+2}\nabla(|u|^{p+2})\,dxdydt=(n-1)\frac{\lambda_1p}{p+2}\int_I\int_{\mathbb{R}^n}\frac{|u(t,y)
|^2|u(t,x) |^{p+2}}{|x-y|}\,dxdydt.
\]
Note that $\lambda_1>0$, we get $(I)$ is positive.\\
For $(\amalg)$, we define $h(x)=\int_{\mathbb{R}^n}|u(t,y)
|^2\frac{x-y}{|x-y|}\,dy$, then we have
\begin{eqnarray*}
(\amalg)&=&-\lambda_2Re\int_I\int_{\mathbb{R}^n}h(x)\left\{\nabla(|x|^{-\gamma}\ast|u|^2)|u|^2\right\}\,dxdt\nonumber\\
&=&\lambda_2\gamma
Re\int_I\int_{\mathbb{R}^n}\int_{\mathbb{R}^n}\frac{1}{|x-z|^{\gamma+1}}\frac{x-z}{|x-z|}|u(t,z)|^2|u(t,x)|^2h(x)\,dxdzdt\\
&=&\frac{1}{2}\lambda_2\gamma
Re\int_I\int_{\mathbb{R}^n}\int_{\mathbb{R}^n}\frac{1}{|x-z|^{\gamma+2}}|u(t,z)|^2|u(t,x)|^2[(x-z)(h(x)-h(z))]\,dxdzdt.
\end{eqnarray*}
Notice that
\begin{equation}
(x-z)(h(x)-h(z))=(x-z)\int_{\mathbb{R}^n}|u(t,y)
|^2\left(\frac{x-y}{|x-y|}-\frac{z-y}{|z-y|}\right)\,dy\label{5.6}\end{equation}
and denote $a:=x-y,\quad b:=z-y$, then, we have
$(5.6)=\int_{\mathbb{R}^n}|u(t,y)
|^2(a-b)(\frac{a}{|a|}-\frac{b}{|b|})\,dy$.\\
Since
$(a-b)(\frac{a}{|a|}-\frac{b}{|b|})=(|a||b|-ab)(\frac{1}{|a|}+\frac{1}{|b|})\geq0$
and $\lambda_2>0$, thus $(\amalg)$ is positive, so we show
\eqref{5.2} is positive.
\begin{remark}
When $n=2$, we don't know whether $-\Delta(\frac{1}{|x|})$ is
positive or not. However, J. Colliander, M. Grillakis and N.
Tzirakis use a refined  tensor product approach to prove that
\eqref{5.4} also holds when $n=2$(\cite{13,14}). Then the
corresponding \eqref{5.1} and \eqref{2.6} also exist, we can use the
same approach which used in Section 5.3 to show the scattering of
the power type. However, the corresponding \eqref{2.7} don't hold.
Since we need $\gamma>2$, but in this case $\gamma<n=2$. So the
scattering of the Hartree type can't be gotten.
\end{remark}
\subsection{Global bounds in the case: $p=\frac{4}{n},\ 2<\gamma<\min{\{n,4\}}$ and $\lambda_1,\lambda_2>0$
or $\frac{4}{n}<p<\frac{4}{n-2},\ \gamma=2$ and
$\lambda_1,\lambda_2>0$} \ \quad The approaches for both cases are
the same, we settle the first case and the same method can be used
for the other one. Without loss of generality, let
$\lambda_1=\lambda_2=1$.\par We view the second nonlinearity as a
perturbation to \eqref{5}. By using Proposition \eqref{5.1}, and the
conservation of energy and mass, we get
\[
\parallel u\parallel_{Z(\mathbb{R})}\lesssim\parallel
u\parallel_{L^\infty_tH_x^1(\mathbb{R}\times\mathbb{R}^n)}\leq
C(E,M).
\]
Split $\mathbb{R}$ into $J=J(E,M,\varepsilon)$ subintervals $I_j,\
0\leq j\leq J-1$, such that
\[
\parallel u\parallel_{Z(I_j)}\sim\varepsilon,
\]
where $\varepsilon$ is a small positive constant to be chosen
later.\\
On the slab $I\times\mathbb{R}^n$, we define:
\[
\dot{\widetilde{X}^0}(I):=L_t^{2+\frac{1}{\theta}}L_x^{\frac{2n(2\theta+1)}{n(2\theta+1)-4\theta}}(I\times\mathbb{R}^n)\cap
V(I),
\]
where $\theta$ is introduced in Lemma $\ref{lemma2.6}$. Then on each
$I_j\,(0\leq j\leq J-1)$, by \eqref{2.8} we have
\begin{eqnarray}
\parallel(|x|^{-\gamma}\ast|u|^2)u\parallel_{\dot{N}^0(I_j\times\mathbb{R}^n)}&\lesssim&\parallel
u\parallel_{L_t^{2+\frac{1}{\theta}}L_x^{\frac{2n(2\theta+1)}{n(2\theta+1)-4\theta}}(I_j\times\mathbb{R}^n)}\parallel
u\parallel_{Z(I_j)}^{\frac{n+1}{2(2\theta+1)}}\parallel
 u\parallel_{L_t^\infty
 H_x^1(I_j\times\mathbb{R}^n)}^{\beta_1(\theta)+\beta_2(\theta)}\nonumber\\
 &\leq&C(E,M)\varepsilon^c\parallel
 u\parallel_{\dot{\widetilde{X}^0}(I_j)},\label{5.7}
\end{eqnarray}
where $c=\frac{n+1}{2(2\theta+1)}$.\par
 In what follow, we fix an interval $I_{j_0}=[a,b]$ and prove that $u$
 obeys good Strichartz estimates on the slab
 $I_{j_0}\times\mathbb{R}^n$. Let $v$ be a solution to
\begin{eqnarray*}
 \left\{
\begin{array}{ll}
iv_t+\Delta v=|v|^{\frac{4}{n}}v,\\
v(a)=u(a).
\end{array}
\right.
\end{eqnarray*}
As this initial value problem is globally well-posedness in $H_x^1$,
and by Assumption 1.1 and Lemma \ref{lemma 3.6}, the unique solution
$v$ satisfies
\[
\parallel v\parallel_{\dot{S}^0(\mathbb{R}\times\mathbb{R}^n)}\leq
C(M).
\]
Subdivide $\mathbb{R}$ into $K=K(M,\eta)$ subinterval $J_k$ such
that on each $J_k$
\begin{equation}
\parallel v\parallel_{\dot{\widetilde{X}^0}(J_k)}\sim\eta
\end{equation}
for a small constant $\eta>0$ to be chosen later.\par
 We are only
interested in the subintervals $J_k=[t_k,t_{k+1}]$ which have a
nonempty intersection with $I_{j_0}$. Without loss of generality,
assume that $[a,b]=\cup_{k=0}^{k'-1}J_k,
t_0=a,t_{k'}=b$.\\
On each $J_k$, by Strichartz estimates and \eqref{5.4}, we get
\[
\parallel
e^{i(t-t_k)\Delta}v(t_k)\parallel_{\dot{\widetilde{X}^0}(J_k)}\leq\parallel
v\parallel_{\dot{\widetilde{X}^0}(J_k)}+C\parallel
|v|^{\frac{4}{n}}v\parallel_{\dot{N}^0(J_k\times\mathbb{R}^n)}\leq\eta+C\parallel
v\parallel_{V(J_k)}^{1+\frac{4}{n}}\leq\eta+C\eta^{1+\frac{4}{n}}.
\]
Choosing $\eta$ sufficiently small, we get \begin{equation}
\parallel
e^{i(t-t_k)\Delta}v(t_k)\parallel_{\dot{\widetilde{X}^0}(J_k)}\leq
2\eta.\label{5.9}
\end{equation}
Next, we will use Lemma $\ref{lemma3.5}$ to obtain an estimate on
the $S^1-$norm of $u$ on $I_{j_0}\times\mathbb{R}^n$. On the
interval $J_0$, recalling that $u(t_0)=v(t_0)$, by Strichartz
estimates, \eqref{5.7} and \eqref{5.9},
\begin{eqnarray*}
\parallel u\parallel_{\dot{\widetilde{X}^0}(J_0)}&\leq&\parallel
e^{i(t-t_0)\Delta}u(t_0)\parallel_{\dot{\widetilde{X}^0}(J_0)}+C\parallel
u\parallel_{\dot{\widetilde{X}^0}(J_0)}^{1+\frac{4}{n}}+C(E,M)\varepsilon^c\parallel
u\parallel_{\dot{\widetilde{X}^0}(J_0)}\\
&\leq&2\eta+C\parallel
u\parallel_{\dot{\widetilde{X}^0}(J_0)}^{1+\frac{4}{n}}+C(E,M)\varepsilon^c\parallel
u\parallel_{\dot{\widetilde{X}^0}(J_0)}.
\end{eqnarray*}
By a standard continuity argument yields
\[
\parallel u\parallel_{\dot{\widetilde{X}^0}(J_0)}\leq4\eta
\]
provided $\eta$ and $\varepsilon$ are chosen sufficiently small. In
order to use Lemma \ref{lemma3.5}, we notice that \eqref{3.39} holds
on $I:=J_0$ for $L_0:=4\eta,$\ \eqref{3.38} holds with $M'_0=0$. We
only show that the error is sufficiently small. In fact, from
\[
\parallel
e\parallel_{\dot{N}^0(J_0\times\mathbb{R}^n)}\leq
C(E,M)\varepsilon^c\parallel
u\parallel_{\dot{\widetilde{X}^0}(J_0)}\leq C(E,M)\eta\varepsilon^c,
\] and
choosing $\varepsilon$ to be sufficiently small, we obtain
\[
\parallel
u-v\parallel_{\dot{S}^0(J_0\times\mathbb{R}^n)}\leq\varepsilon^\frac{c}{2}.
\]
From Strichartz estimates, we have
\begin{eqnarray}
\parallel
u(t_1)-v(t_1)\parallel_{L_x^2}&\leq&\varepsilon^\frac{c}{2}\nonumber\\
\parallel
e^{i(t-t_1)\Delta}(u(t_1)-v(t_1))\parallel_{\dot{\widetilde{X}^0}(J_1)}&\lesssim&\varepsilon^\frac{c}{2}.\label{5.10}
\end{eqnarray}
On the other hand,
\begin{eqnarray*}
\parallel
u\parallel_{\dot{S}^1(J_0\times\mathbb{R}^n)}&\lesssim&\parallel
u(a)\parallel_{\dot{H}_x^1}+\parallel
u\parallel_{V(J_0)}^\frac{4}{n}\parallel
u\parallel_{\dot{S}^1(J_0\times\mathbb{R}^n)}+\parallel(|x|^{-\gamma}\ast|u|^2)u\parallel_{\dot{N}^1(I\times\mathbb{R}^n)}\\
&\lesssim&C(E)+(4\eta)^\frac{4}{n}\parallel
u\parallel_{\dot{S}^1(J_0\times\mathbb{R}^n)}+C(E,M)\varepsilon^c\parallel
u\parallel_{\dot{S}^1(J_0\times\mathbb{R}^n)}.
\end{eqnarray*}
Choosing $\eta$ and $\varepsilon$ sufficiently small, we have
\[
\parallel
u\parallel_{\dot{S}^1(J_0\times\mathbb{R}^n)}\leq C(E).
\]
On the intervals $J_1$, by Strichartz estimates, \eqref{5.7},\
\eqref{5.10}, we get
\begin{eqnarray*}
\parallel u\parallel_{\dot{\widetilde{X}^0}(J_1)}&\leq&\parallel
e^{i(t-t_1)\Delta}v(t_1)\parallel_{\dot{\widetilde{X}^0}(J_1)}+\parallel
e^{i(t-t_1)\Delta}(u(t_1)-v(t_1))\parallel_{\dot{\widetilde{X}^0}(J_1)}\\
&&+C\parallel
u\parallel_{\dot{\widetilde{X}^0}(J_1)}^{1+\frac{4}{n}}+C(E,M)\varepsilon^c\parallel
u\parallel_{\dot{\widetilde{X}^0}(J_1)}\\
&\leq&2\eta+\varepsilon^\frac{c}{2}+C\parallel
u\parallel_{\dot{\widetilde{X}^0}(J_1)}^{1+\frac{4}{n}}+C(E,M)\varepsilon^c\parallel
u\parallel_{\dot{\widetilde{X}^0}(J_1)}.
\end{eqnarray*}
Choosing $\eta$ and $\varepsilon$ sufficiently small, we obtain
\[
\parallel u\parallel_{\dot{\widetilde{X}^0}(J_1)}\leq4\eta.
\]
This implies that the error satisfies the condition of Lemma
$\ref{lemma3.5}$ on $J_1$. Choosing $\varepsilon$ sufficiently
small, and applying Lemma $\ref{lemma3.5}$ to derive
\[
\parallel
u-v\parallel_{\dot{S}^0(J_1\times\mathbb{R}^n)}\leq\varepsilon^\frac{c}{4}.
\]
The same arguments as before also yields
\[
\parallel
u\parallel_{\dot{S}^1(J_1\times\mathbb{R}^n)}\leq C(E).
\]
By the induction argument, for each $0\leq k\leq k'-1$, we get
\begin{eqnarray*}
\parallel
u-v\parallel_{\dot{S}^0(J_k\times\mathbb{R}^n)}&\leq&\varepsilon^\frac{c}{2^{k+1}},\\
\parallel
u\parallel_{\dot{S}^1(J_k\times\mathbb{R}^n)}&\leq& C(E).
\end{eqnarray*}
Adding these estimates over all the intervals $J_k$ which have a
nonempty intersection with $I_{j_0}$, we obtain
\begin{eqnarray*}
\parallel
u\parallel_{\dot{S}^0(I_{j_0}\times\mathbb{R}^n)}&\leq&\parallel
v\parallel_{\dot{S}^0(I_{j_0}\times\mathbb{R}^n)}+\sum\limits_{k=0}^{k'-1}\parallel
u-v\parallel_{\dot{S}^0(J_k\times\mathbb{R}^n)}\leq C(E,M)\\
\parallel
u\parallel_{\dot{S}^1(I_{j_0}\times\mathbb{R}^n)}&\leq&\sum\limits_{k=0}^{k'-1}\parallel
u\parallel_{\dot{S}^1(J_k\times\mathbb{R}^n)}\leq C(E,M).
\end{eqnarray*}
As the intervals $I_{j_0}$ was arbitrarily chosen, we obtain
\begin{eqnarray*}
\parallel
u\parallel_{\dot{S}^0(\mathbb{R}\times\mathbb{R}^n)}&\leq&\sum\limits_{j=0}^{J-1}\parallel
u\parallel_{\dot{S}^0(I_j\times\mathbb{R}^n)}\leq C(E,M)\\
\parallel
u\parallel_{\dot{S}^1(\mathbb{R}\times\mathbb{R}^n)}&\leq&\sum\limits_{j=0}^{J-1}\parallel
u\parallel_{\dot{S}^1(I_j\times\mathbb{R}^n)}\leq C(E,M),\\
\end{eqnarray*}
and hence
\[
\parallel
u\parallel_{S^1(\mathbb{R}\times\mathbb{R}^n)}\leq C(E,M).
\]
\subsection{Global bounds in the case: $\frac{4}{n}<p<\frac{4}{n-2},\ 2<\gamma<\min{\{n,4\}}$ and $\lambda_1,\lambda_2>0$}
The results were shown in \cite{19} with a more complicated
argument. We use a simpler proof which is used in \cite{24} that
relies on the interaction Morawetz estimate.\par
 By Proposition $\ref{5.1}$, we have
 \[
 \parallel u\parallel_{Z(\mathbb{R})}\lesssim\parallel
 u\parallel_{L^\infty_tH_x^1(\mathbb{R}\times\mathbb{R}^n)}\leq
 C(E,M)
 \]
Devide $\mathbb{R}$ into $J=J(E,M,\eta)$ subintervals
$I_j=[t_j,t_{j+1}]$ such that
\[
\parallel u\parallel_{Z({I}_j)}\sim\eta
\]
where $\eta>0$ be a small constant to be chosen later.\par
 By Strichartz estimates and Lemma \ref{lemma2.6}, on each $I_j$,
 we have
 \begin{eqnarray*}
\parallel
u\parallel_{S^1(I_j\times\mathbb{R}^n)}&\lesssim&\parallel
u(t_j)\parallel_{H_x^1}+\eta^{\frac{n+1}{2(2\theta+1)}}\parallel
u\parallel_{L_t^\infty
 H_x^1(I_j\times\mathbb{R}^n)}^{{\alpha_1(\theta)}+{\alpha_2(\theta)}}\parallel
u\parallel_{S^1(I_j\times\mathbb{R}^n)}\\
&&+\eta^{\frac{n+1}{2(2\theta+1)}}\parallel
 u\parallel_{L_t^\infty
 H_x^1(I_j\times\mathbb{R}^n)}^{\beta_1(\theta)+\beta_2(\theta)}\parallel
u\parallel_{S^1(I_j\times\mathbb{R}^n)}\\
&\lesssim&C(E,M)+\eta^{\frac{n+1}{2(2\theta+1)}}C(E,M)\parallel
u\parallel_{S^1(I_j\times\mathbb{R}^n)}\\
&&+\eta^{\frac{n+1}{2(2\theta+1)}}C(E,M)\parallel
u\parallel_{S^1(I_j\times\mathbb{R}^n)}.
 \end{eqnarray*}
Choosing $\eta$ sufficiently small, we have
\[
\parallel
u\parallel_{S^1(I_j\times\mathbb{R}^n)}\leq C(E,M).
\]
Summing these bounds over all intervals $I_j$, we obtain
\[
\parallel
u\parallel_{S^1(\mathbb{R}\times\mathbb{R}^n)}\leq\sum\limits_{j=0}^{J-1}\parallel
u\parallel_{S^1(I_j\times\mathbb{R}^n)}\leq C(E,M).
\]
\subsection{Global bounds in the case: $\frac{4}{n}<p<\frac{4}{n-2},\ \gamma=4$ with $n\geq5$ and $\lambda_1,\lambda_2>0$ or
 $p=\frac{4}{n-2},\ 2<\gamma<\min{\{n,4\}}$ and $\lambda_1,\lambda_2>0$}
\ \quad The approaches for both cases are the same, we show the
first case and the same method can be used for the other. On the
slab $I\times\mathbb{R}^n$, we define:
\[
\dot{\widetilde{Y}^0}(I):=L_t^{2+\frac{1}{\theta}}L_x^{\frac{2n(2\theta+1)}{n(2\theta+1)-4\theta}}(I\times\mathbb{R}^n)\cap
L_t^6L_x^\frac{6n}{3n-2}(I\times\mathbb{R}^n),
\]
where $\theta$ is introduced in Lemma \ref{lemma2.6}. Just replace
 $\dot{\widetilde{X}^0}(I)$ by $\dot{\widetilde{Y}^0}(I)$ that
appears in Subsection 5.2, Lemma \ref{lemma3.2} replace Lemma
\ref{lemma3.5}, apply the same approach that used in Subsection 5.2,
one can get
\[
\parallel u\parallel_{S^1(\mathbb{R}\times\mathbb{R}^n)}\leq C(E,M).
\]

\subsection{Global bounds in the case: $p=\frac{4}{n-2},\ \gamma=2$ and
$\lambda_1,\lambda_2>0$ or $p=\frac{4}{n},\ \gamma=4$ with $n\geq5$
and $\lambda_1,\lambda_2>0$} \ \quad The approaches for both cases
are the same, we settle the first case and the same method can be
used for the other one. Without loss of generality, let
$\lambda_1=\lambda_2=1$. The main idea is that we divide $u$ into
$u_{lo}$ and $u_{hi}$ by frequency, and compare the low frequency
with the $L_x^2$-critical $\textsl{NLS}$, at one time, compare the
high frequency with the $H_x^1$-critical $\textsl{NLS}$. At last, we
get the finite global Strichartz bounds in this case.\par
 We will need a series of small
parameters. More precisely, we will define
\[
0<\eta_3\ll\eta_2\ll\eta_1\ll1,
\]
where any $\eta_j$ is allowed to depend on the energy and the mass
as well as on any of the larger $\eta'$s. By Proposition \ref{5.1}
and conservation of energy and mass, we have
\[
\parallel u\parallel_{Z(\mathbb{R})}\leq C(E,M).
\]
Split $\mathbb{R}$ into $K=K(E,M,\eta_3)$ subintervals $J_k$ such
that on each slab $J_k\times\mathbb{R}^n$ we have
\begin{equation}\label{5.17}
\parallel u\parallel_{Z(J_k)}\sim\eta_3.
\end{equation}
Fix $J_{k_0}=[a,b]$, for every $t\in J_{k_0}$. We split
$u(t)=u_{lo}(t)+u_{hi}(t)$ where $u_{lo}(t):=P_{<\eta_2^{-1}}u(t),\
u_{hi}(t):=P_{\geq\eta_2^{-1}}u(t)$.\par
 On the slab $J_{k_0}\times\mathbb{R}^n$, we compare $u_{lo}(t)$ to
 the following $L_x^2$-critical Hartree $\textsl{NLS}$
 \begin{equation*}
\left\{\begin{array}{ll}
                   (i\partial_t+\Delta)v=(|x|^{-2}\ast|v|^2)v\\
                   v(a)=u_{lo}(a),
                   \end{array}
                   \right.
 \end{equation*}
which is globally well-posedness in $H_x^1$. Moreover, by Assumption
1.2, one has
\[
\parallel v\parallel_{U(\mathbb{R})}\leq C(\parallel
u_{lo}(a))\parallel_{L_x^2}\leq C(M).
\]
By Lemma \ref{lemma 3.6}, we have
\begin{eqnarray}
\parallel v\parallel_{\dot{S}^0(\mathbb{R}\times\mathbb{R}^n)}&\leq&
C(M),\label{5.18}\\
\parallel v\parallel_{\dot{S}^1(\mathbb{R}\times\mathbb{R}^n)}&\leq&
C(E,M).\label{5.19}
\end{eqnarray}
Divide $J_{k_0}=[a,b]$ into $J=J(M,\eta_1)$ subintervals
$I_j=[t_{j-1},t_j]$ with $t_0=a,t_J=b$, such that
\begin{equation}\label{5.20}
\parallel v\parallel_{U(I_j)}\sim\eta_1.
\end{equation}
By induction, we will establish that for each $j=1,\cdots,J$, we
have
\begin{equation}\label{5.21}
P(j):\left\{\begin{array}{lll}
               \parallel
               u_{lo}-v\parallel_{\dot{S}^0([t_0,t_j])}\leq\eta_2^{1-2\delta},\\
               \parallel u_{hi}\parallel_{\dot{S}^1(I_l)}\leq
               L(E),\qquad\mbox{for every} 1\leq l\leq j,\\
               \parallel u\parallel_{S^1([t_0,t_j])}\leq
               C(\eta_1,\eta_2),
               \end{array}
               \right.
\end{equation}
where $\delta>0$ is a small constant to be chosen later, and $L(E)$
is a large quantity to be chosen later which depends only on $E$(
not on any $\eta_j$). As the method of checking that \eqref{5.21}
holds for $j=1$ is similar to that of the induction step, i.e.
showing that $P(j)$ implies $P(j+1)$, we will only prove the
latter.\par
 Assume that \eqref{5.21} is true for some $1\leq j<J$. Then, we
 will show
\begin{equation}\label{5.22}
\left\{\begin{array}{lll}
               \parallel
               u_{lo}-v\parallel_{\dot{S}^0([t_0,t_{j+1}])}\leq\eta_2^{1-2\delta},\\
               \parallel u_{hi}\parallel_{\dot{S}^1(I_l)}\leq
               L(E),\qquad\mbox{for every} 1\leq l\leq j+1,\\
               \parallel u\parallel_{S^1([t_0,t_{j+1}])}\leq C(\eta_1,\eta_2)
               \end{array}
               \right..
\end{equation}
Let $\Omega_1$ be the set of all times $T\in I_{j+1}$ such that
\begin{eqnarray}
\parallel u_{lo}-v\parallel_{\dot{S}^0([t_0,T])}&\leq&\eta_2^{1-2\delta}\label{5.23},\\
\parallel u_{hi}\parallel_{\dot{S}^1([t_j,T])}&\leq&
               L(E)\label{5.24},\\
\parallel u\parallel_{S^1([t_0,T])}&\leq& C(\eta_1,\eta_2).\label{5.25}
\end{eqnarray}
In order to prove $\Omega_1=I_{j+1}$, we notice that $\Omega_1$ is
nonempty (as $t_j\in\Omega_1$) and closed (by Fatou). Let $\Omega_2$
be the set of all times $T\in I_{j+1}$ such that
\begin{eqnarray}
\parallel u_{lo}-v\parallel_{\dot{S}^0([t_0,T])}&\leq&2\eta_2^{1-2\delta},\label{5.26}\\
\parallel u_{hi}\parallel_{\dot{S}^1([t_j,T])}&\leq&
               2L(E),\label{5.27}\\
\parallel u\parallel_{S^1([t_0,T])}&\leq& 2C(\eta_1,\eta_2).\label{5.28}
\end{eqnarray}
We will show $\Omega_2\subset\Omega_1$, which will conclude the
argument.
\begin{lemma}
Let $T\in\Omega_2$. Then, the following properties holds:
\begin{eqnarray}
\parallel u_{lo}\parallel_{U(I)}&\lesssim&\eta_1\label{5.29},\\
\parallel u_{lo}\parallel_{\dot{S}^0([t_0,T]\times\mathbb{R}^n)}&\leq&
               C(M),\label{5.30}\\
\parallel u_{lo}\parallel_{W([t_j,T])}&\lesssim&\eta_2,\label{5.31}\\
\parallel
u_{lo}\parallel_{\dot{S}^1(I\times\mathbb{R}^n)}&\lesssim&E,\label{5.32}\\
\parallel
u_{lo}\parallel_{\dot{S}^1([t_0,T]\times\mathbb{R}^n)}&\lesssim&C(\eta_1)E,\label{5.33}\\
\parallel
u_{hi}\parallel_{\dot{S}^0(I\times\mathbb{R}^n)}&\lesssim&\eta_2L(E),\label{5.34}\\
\parallel
u_{hi}\parallel_{\dot{S}^0([t_0,T]\times\mathbb{R}^n)}&\lesssim&\eta_2C(\eta_1)L(E),\label{5.35}\\
\parallel
u_{hi}\parallel_{\dot{S}^1([t_0,T]\times\mathbb{R}^n)}&\lesssim&C(\eta_1)L(E),\label{5.36}\\
&&\hspace{-9cm}\mbox{where }I\in\{I_l,1\leq l\leq
j\}\cup\{[t_j,T]\}.\nonumber
\end{eqnarray}
\end{lemma}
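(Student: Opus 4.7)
The lemma collects eight Strichartz-type estimates for the frequency decomposition $u = u_{lo} + u_{hi}$ with $u_{lo} = P_{<\eta_2^{-1}}u$. My plan splits them into three groups, each treated with a different tool: (i) triangle-inequality transfer from $v$ to $u_{lo}$ using (5.26); (ii) Strichartz together with conservation of energy $\|u(t)\|_{\dot H^1}\le E^{1/2}$; and (iii) Bernstein (Lemma 2.3) exploiting the frequency cut-off $\eta_2^{-1}$. The estimates (5.29) and (5.30) fall under (i): on each subinterval $I\in\{I_l\}_{l=1}^{j}\cup\{[t_j,T]\}$ write $\|u_{lo}\|_{U(I)}\le\|v\|_{U(I)}+\|u_{lo}-v\|_{U(I)}$, where the first term is $\sim\eta_1$ by (5.20) and the second, since $U$ is Schr\"odinger-admissible and hence controlled by $\dot{S}^0$, is $\lesssim\eta_2^{1-2\delta}\ll\eta_1$ by (5.26). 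Estimate (5.30) is analogous, using (5.18) in place of (5.20).

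For the energy-level bounds (5.32), (5.33) and (5.36), I combine Strichartz with conservation of energy. On a single $I$, use the Duhamel representation of $u_{lo}$, which satisfies $(i\partial_t+\Delta)u_{lo}=P_{<\eta_2^{-1}}\bigl(|u|^{4/(n-2)}u+(|x|^{-2}\ast|u|^2)u\bigr)$, and control the resulting $\dot N^1$-norms via Lemma 2.5 and Remark 2.1, using the bootstrap bound (5.28) on $\|u\|_{S^1([t_0,T])}$ together with the $U$-smallness from Step 1 to close the estimate. Summation over the $J=J(E,M,\eta_1)$ intervals produces the $C(\eta_1)$ factor in (5.33); (5.36) then follows by writing $u_{hi}=u-u_{lo}$ and invoking (5.28) and (5.33). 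For the $\dot S^0$-level bounds (5.34)--(5.35) on $u_{hi}$, Bernstein applied to the frequency cut-off $|\xi|\ge\eta_2^{-1}$ gives $\|u_{hi}\|_{\dot S^0(I)}\lesssim\eta_2\,\|u_{hi}\|_{\dot S^1(I)}$, so the inductive bound (5.27) immediately yields (5.34); telescoping across the $I_l$'s and adding the $[t_j,T]$ tail gives (5.35) with the $C(\eta_1)$ factor coming from the interval count.

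The main obstacle is (5.31), the precise $\eta_2$-smallness of the energy-critical norm $\|u_{lo}\|_{W([t_j,T])}$, because $W$ sits at $\dot S^1$-critical scaling while $u_{lo}$ is only known to be $\dot H^1$-bounded by $E^{1/2}$ and one cannot extract smallness from energy conservation alone. My plan is to interpolate $\|u_{lo}\|_{W([t_j,T])}$ between the sub-critical bound $\|u_{lo}\|_{U(I_{j+1})}\lesssim\eta_1$ from Step 1 and an $\dot S^1$-controlled norm of size $\lesssim E^{1/2}$, matching the $W$ time exponent by a Hölder step and paying a Bernstein loss $\eta_2^{-s}$ with $s<1$ from the frequency cut-off of $u_{lo}$. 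The smallness hierarchy $\eta_3\ll\eta_2\ll\eta_1\ll 1$ is calibrated precisely so that the resulting product $\eta_2^{-s}\,\eta_1^{1-s}\,E^{s/2}$ is absorbed into $\eta_2$; this balancing is the only place in the proof where the strict ordering between the three small scales is actually used, and the choice of $\delta$ in (5.23) must be compatible with the same exponent arithmetic.
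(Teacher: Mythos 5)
Your strategy for (5.29), (5.30), (5.34), (5.35) matches the paper and is fine. But the plans for (5.31), (5.32), and (5.36) all have real gaps, and (5.31) in particular rests on exponent arithmetic that cannot close.

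For (5.31), you propose to interpolate $\|u_{lo}\|_{W([t_j,T])}$ between $\|u_{lo}\|_{U(I_{j+1})}\lesssim\eta_1$ and an $\dot{S}^1$-size $E^{1/2}$ quantity, paying a Bernstein loss $\eta_2^{-s}$, and to absorb $\eta_2^{-s}\eta_1^{1-s}E^{s/2}$ into $\eta_2$. This is impossible under the stated hierarchy $0<\eta_3\ll\eta_2\ll\eta_1\ll1$: you need the gain to beat the loss, but $\eta_1$ is the \emph{largest} of the small parameters while the Bernstein loss is a negative power of the \emph{smaller} parameter $\eta_2$. Since $\eta_1\gg\eta_2$, the product $\eta_2^{-s}\eta_1^{1-s}$ grows as $\eta_2\to0$ and cannot be made $\lesssim\eta_2$. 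The paper's argument replaces your $\eta_1$-gain by the Morawetz/$Z$-norm smallness $\eta_3$ from \eqref{5.17}: it splits $u_{lo}=P_{\le\eta_2}u_{lo}+P_{\eta_2<\cdot<\eta_2^{-1}}u_{lo}$, and for the middle range uses interpolation with the $Z$-norm plus Bernstein to produce $\eta_2^{-s}\eta_3^c E^{1-c}\le\eta_2$, which does close because $\eta_3\ll\eta_2$. Only the very low piece $P_{\le\eta_2}u_{lo}$ is handled with the favorable factor $\eta_2$ from Bernstein combined with (5.29) or (5.30). The point you've missed is that the $Z$-norm smallness, not the $U$-norm smallness, is the mechanism for absorbing the Bernstein loss in the $W$-norm estimate.

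For (5.32), you cite Lemma 2.5 and ``the $U$-smallness from Step 1.'' The $U$-smallness does control the Hartree piece $P_{lo}((|x|^{-2}\ast|u|^2)u)$, but the energy-critical power piece $P_{lo}(|u|^{4/(n-2)}u)$ is not controlled by any $L^2$-critical norm of $u_{lo}$; the paper controls it with Bernstein ($\eta_2^{-1}$ loss) followed by Lemma 2.7 (the $\rho$-interpolation against the $Z$-norm), and again uses $\eta_3^\rho$ to beat $\eta_2^{-1}$. Lemma 2.5 gives an estimate with a factor $|I|^\alpha$, which is useless here since the interval is not shrinking. For (5.36), writing $u_{hi}=u-u_{lo}$ and invoking (5.28), (5.33) gives only $\|u_{hi}\|_{\dot{S}^1([t_0,T])}\lesssim C(\eta_1,\eta_2)$, which is much larger than the claimed $C(\eta_1)L(E)$ because $C(\eta_1,\eta_2)$ blows up as $\eta_2\to0$. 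The paper instead sums the interval bounds in (5.21) and (5.27) directly, getting $C(\eta_1)L(E)$ with no $\eta_2$-dependence; this is needed later, since (5.36) feeds into estimates that must survive when $\eta_2$ is chosen small.
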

\begin{proof}
Using \eqref{5.18}, \eqref{5.20}, \eqref{5.26}, and Bernstein
inequality, we have
\begin{eqnarray*}
\parallel u_{lo}\parallel_{U(I)}&\leq&\parallel
u_{lo}-v\parallel_{U(I)}+\parallel
v\parallel_{U(I)}\lesssim\eta_2^{(1-2\delta)}+\eta_1\lesssim\eta_1,\\
\parallel u_{lo}\parallel_{\dot{S}^0([t_0,T]\times\mathbb{R}^n)}&\leq&\parallel
u_{lo}-v\parallel_{\dot{S}^0([t_0,T]\times\mathbb{R}^n)}+\parallel
v\parallel_{\dot{S}^0([t_0,T]\times\mathbb{R}^n)}\lesssim\eta_2^{1-2\delta}+C(M)\leq
C(M),\\
\parallel
u_{hi}\parallel_{\dot{S}^0(I\times\mathbb{R}^n)}&\lesssim&\eta_2\parallel
u_{hi}\parallel_{\dot{S}^1(I\times\mathbb{R}^n)}\lesssim\eta_2L(E).
\end{eqnarray*}
Therefore, \eqref{5.29},\ \eqref{5.30} and \eqref{5.34} hold. In
view of $J=O(\eta_1^{-C})$, we get
\begin{eqnarray*}
\parallel
u_{hi}\parallel_{\dot{S}^1([t_0,T]\times\mathbb{R}^n)}&\lesssim&\sum\limits_{l=1}^j\parallel
u_{hi}\parallel_{\dot{S}^1(I_l\times\mathbb{R}^n)}+\parallel
u_{hi}\parallel_{\dot{S}^1([t_j,T]\times\mathbb{R}^n)}\leq
C(\eta_1)L(E)+\eta_2L(E)\leq C(\eta_1)L(E),\\
\parallel
u_{hi}\parallel_{\dot{S}^0([t_0,T]\times\mathbb{R}^n)}&\lesssim&\eta_2\parallel
u_{hi}\parallel_{\dot{S}^1([t_0,T]\times\mathbb{R}^n)}\leq\eta_2C(\eta_1)L(E).
\end{eqnarray*}
Hence, \eqref{5.35} and \eqref{5.36} hold. On the slab
$I\times\mathbb{R}^n$, $u_{lo}$ satisfies the equation
\[
u_{lo}(t)=e^{i(t-t_l)\Delta}u_{lo}(t_l)-i\int_{t_l}^te^{i(t-s)\Delta}P_{lo}\left(|u|^\frac{4}{n-2}u+(|x|^{-2}\ast|u|^2)u\right)(s)\,ds,
\]
where $0\leq l\leq j$. Then by Strichartz estimate
\[
\parallel
u_{lo}\parallel_{\dot{S}^1(I\times\mathbb{R}^n)}\lesssim\parallel
u_{lo}(t_l)\parallel_{\dot{H}^1_x}+\parallel
P_{lo}(|u|^\frac{4}{n-2}u)\parallel_{\dot{N}^1(I\times\mathbb{R}^n)}+\parallel
P_{lo}((|x|^{-2}\ast|u|^2)u)\parallel_{\dot{N}^1(I\times\mathbb{R}^n)}.
\]
By using Bernstein inequality, Lemma \ref{lemma},\ \eqref{5.17} and
\eqref{5.28}, we have
\[
\parallel
P_{lo}(|u|^\frac{4}{n-2}u)\parallel_{\dot{N}^1(I\times\mathbb{R}^n)}\lesssim\eta_2^{-1}\parallel
|u|^\frac{4}{n-2}u\parallel_{\dot{N}^0(I\times\mathbb{R}^n)}\lesssim\eta_2^{-1}\parallel
u\parallel_{Z(I)}^\rho\parallel
 u\parallel_{S^1(I\times\ast\mathbb{R}^n)}^{\frac{n+2}{n-2}-\rho}\lesssim\eta_2^{-1}\eta_3^\rho
 C(\eta_1,\eta_2)\leq\eta_2.
\]
Chosen $\eta_3$ is sufficiently small depending on $\eta_1$ and
$\eta_2$. By using H\"older, Hardy-Littlewood-Sobolev inequality,
\eqref{5.27},\ \eqref{5.29} and \eqref{5.34}, we have
\begin{eqnarray*}
\parallel
P_{lo}((|x|^{-2}\ast|u|^2)u)\parallel_{\dot{N}^1(I\times\mathbb{R}^n)}&\lesssim&\parallel
u\parallel_{U(I)}^2\parallel\nabla u\parallel_{U(I)}\\
&\lesssim&\parallel u_{lo}\parallel_{U(I)}^2\parallel\nabla
u_{lo}\parallel_{U(I)}+\parallel
u_{hi}\parallel_{U(I)}^2\parallel\nabla
u_{hi}\parallel_{U(I)}\\
&&+\parallel u_{lo}\parallel_{U(I)}^2\parallel\nabla
u_{hi}\parallel_{U(I)}+\parallel
u_{hi}\parallel_{U(I)}^2\parallel\nabla
u_{lo}\parallel_{U(I)}\\
&\lesssim&\eta_1^2\parallel
u_{lo}\parallel_{\dot{S}^1(I\times\mathbb{R}^n)}+(\eta_2L(E))^2L(E)\\
&&+\eta_1^2L(E)+(\eta_2L(E))^2\parallel
u_{lo}\parallel_{\dot{S}^1(I\times\mathbb{R}^n)}.
\end{eqnarray*}
Then, $\parallel
u_{lo}\parallel_{\dot{S}^1(I\times\mathbb{R}^n)}\lesssim
E+\eta_2+(\eta_2L(E))^2L(E)+\eta_1^2L(E)+(\eta_1^2+(\eta_2L(E))^2)\parallel
u_{lo}\parallel_{\dot{S}^1(I\times\mathbb{R}^n)}$.\\
Taking $\eta_1$ and $\eta_2$ sufficiently small depending on $E$, we
can get
\[
\parallel
u_{lo}\parallel_{\dot{S}^1(I\times\mathbb{R}^n)}\lesssim E.
\]
Then, \eqref{5.32} holds. Of course, \eqref{5.33} can be obtained by
\eqref{5.32}, since $J=C(\eta_1)$.\\
At last, we show \eqref{5.31} is true. We write
$u_{lo}=P_{\leq\eta_2}u_{lo}+P_{\eta_2<\cdot<\eta_2^{-1}}u_{lo}$.\par
 In dimension $n\geq5$, by interpolation, Sobolev embedding,
 Bernstein inequality, \eqref{5.17} and \eqref{5.32}, we have
 \begin{eqnarray*}
\parallel
P_{\eta_2<\cdot<\eta_2^{-1}}u_{lo}\parallel_{W([t_j,T])}&\lesssim&\parallel
P_{\eta_2<\cdot<\eta_2^{-1}}u_{lo}\parallel_{L_t^{n+1}L_x^{\frac{2n(n+1)}{n^2-n-6}}([t_j,T]\times\mathbb{R}^n)}^c\parallel
P_{\eta_2<\cdot<\eta_2^{-1}}u_{lo}\parallel_{L_t^2L_x^{\frac{2n}{n-4}}([t_j,T]\times\mathbb{R}^n)}^{1-c}\\
&\lesssim&\parallel|\nabla|^{\frac{3}{n+1}}
P_{\eta_2<\cdot<\eta_2^{-1}}u_{lo}\parallel_{Z([t_j,T])}^c\parallel
u_{lo}\parallel_{\dot{S}^1([t_j,T]\times\mathbb{R}^n)}^{1-c}\\
&\lesssim&\eta_2^{-\frac{3}{n+1}}\parallel
u_{lo}\parallel_{Z([t_j,T])}^cE^{1-c}\\
&\lesssim&\eta_2^{-\frac{3}{n+1}}\eta_3^cE^{1-c}\\
&\leq&\eta_2,
\end{eqnarray*}
where $c=\frac{4(n+1)}{(n-1)(n+2)}$.\par
 In dimension $n=4$, by using interpolation, Sobolev embedding, Bernstein
 inequality, the conservation of energy and \eqref{5.17}, we get
\begin{eqnarray*}
\parallel
P_{\eta_2<\cdot<\eta_2^{-1}}u_{lo}\parallel_{W([t_j,T])}
&\lesssim&\parallel
P_{\eta_2<\cdot<\eta_2^{-1}}u_{lo}\parallel_{L_t^{5}L_x^{\frac{20}{3}}([t_j,T]\times\mathbb{R}^n)}^\frac{5}{6}
\parallel
P_{\eta_2<\cdot<\eta_2^{-1}}u_{lo}\parallel_{L_t^\infty L_x^{4}([t_j,T]\times\mathbb{R}^n)}^{\frac{1}{6}}\\
&\lesssim&\parallel|\nabla|^{\frac{3}{5}}
P_{\eta_2<\cdot<\eta_2^{-1}}u_{lo}\parallel_{Z([t_j,T])}^\frac{5}{6}E^\frac{1}{6}\\
&\lesssim&(\eta_2^{-\frac{3}{5}}\eta_3)^\frac{5}{6}E^{\frac{1}{6}}\\
&\leq&\eta_2.
\end{eqnarray*}\par
In dimension $n=3$, by using interpolation, Sobolev embedding,
Bernstein
 inequality, the conservation of energy and \eqref{5.17}, we get
\begin{eqnarray*}
\parallel
P_{\eta_2<\cdot<\eta_2^{-1}}u_{lo}\parallel_{W([t_j,T])}
&\lesssim&\parallel
P_{\eta_2<\cdot<\eta_2^{-1}}u_{lo}\parallel_{L_t^{4}L_x^{\infty}([t_j,T]\times\mathbb{R}^n)}^\frac{2}{5}
\parallel
P_{\eta_2<\cdot<\eta_2^{-1}}u_{lo}\parallel_{L_t^\infty L_x^{6}([t_j,T]\times\mathbb{R}^n)}^{\frac{3}{5}}\\
&\lesssim&\parallel(1+|\nabla|)^{\frac{3}{4}+\epsilon}
P_{\eta_2<\cdot<\eta_2^{-1}}u_{lo}\parallel_{Z([t_j,T])}^\frac{2}{5}E^\frac{3}{5}\\
&\lesssim&(\eta_2^{-\frac{3}{4}}\eta_3)^\frac{2}{5}E^{\frac{3}{5}}\\
&\leq&\eta_2.
\end{eqnarray*}
Hence, in all dimension $n\geq3$, we all have
\[
\parallel
P_{\eta_2<\cdot<\eta_2^{-1}}u_{lo}\parallel_{W([t_j,T])}\leq\eta_2
\]
By Sobolev embedding, Bernstein inequality and \eqref{5.30}, we have
\begin{eqnarray*}
\parallel P_{\leq\eta_2}u_{lo}\parallel_{W([t_j,T])}\lesssim\parallel\nabla
P_{\leq\eta_2}u_{lo}\parallel_{L_t^\frac{2(n+2)}{n-2}L_x^\frac{2n(n+2)}{n^2+4}([t_j,T]\times\mathbb{R}^n)}\lesssim\eta_2\parallel
u_{lo}\parallel_{L_t^\frac{2(n+2)}{n-2}L_x^\frac{2n(n+2)}{n^2+4}([t_j,T]\times\mathbb{R}^n)}.
\end{eqnarray*}\par
 In dimension $n=3$, by interpolation, \eqref{5.30} and the
 conservation of mass, we get
 \[
 \parallel
 P_{\leq\eta_2}u_{lo}\parallel_{W([t_j,T])}\lesssim\eta_2\parallel
 u_{lo}\parallel_{U([t_j,T])}^\frac{3}{5}\parallel
 u_{lo}\parallel_{L_t^\infty L_x^2([t_j,T]\times\mathbb{R}^n)}^\frac{2}{5}\lesssim\eta_2\eta_1^\frac{3}{5}M^\frac{2}{5}\leq\eta_2
 \]
provided $\eta_1$ is chosen sufficiently small depending on $M$.\par
 In dimension $n=4$, because of $
 L_t^\frac{2(n+2)}{n-2}L_x^\frac{2n(n+2)}{n^2+4}=U$, then
 \[
\parallel
 P_{\leq\eta_2}u_{lo}\parallel_{W([t_j,T])}\lesssim\eta_2\eta_1\leq\eta_2
 \]
 In dimension $n\geq5$, by interpolation, \eqref{5.29} and \eqref{5.30}
 \begin{eqnarray*}
\parallel
 P_{\leq\eta_2}u_{lo}\parallel_{W([t_j,T])}&\lesssim&\eta_2\parallel
u_{lo}\parallel_{U([t_j,T])}^\frac{6}{n+2}\parallel
u_{lo}\parallel_{L_tL_x^\frac{2n}{n-2}([t_j,T]\times\mathbb{R}^n)}^\frac{n-4}{n+2}\\
&\lesssim&\eta_2\eta_1^\frac{6}{n+2}\parallel
u_{lo}\parallel_{\dot{S}^0([t_j,T]\times\mathbb{R}^n)}^\frac{n-4}{n+2}\lesssim\eta_2\eta_1^\frac{6}{n+2}C(M)\leq\eta_2.
 \end{eqnarray*}
 Hence, in all dimension $n\geq3$, we get
 \[
\parallel
 P_{\leq\eta_2}u_{lo}\parallel_{W([t_j,T])}\leq\eta_2.
 \]
 Therefore, by the triangle inequality, \eqref{5.31} is true.
\end{proof}\par
Now, we are ready to show $\Omega_2\subset\Omega_1$. We will first
show \eqref{5.21}. The method is to compare $u_{lo}$ to $v$ via the
perturbation result of Lemma \ref{lemma3.3}. $u_{lo}$ satisfies the
following initial value problem on the slab
$[t_0,T]\times\mathbb{R}^n$
\begin{eqnarray*}
\left\{\begin{array}{ll}
           (i\partial_t+\Delta)u_{lo}=(|x|^{-2}\ast|u_{lo}|^2)u_{lo}+P_{lo}(|u|^\frac{4}{n-2}u)\\
           \hspace{3cm}+P_{lo}[(|x|^{-2}\ast|u|^2)u
           -(|x|^{-2}\ast|u_{lo}|^2)u_{lo}]-P_{hi}((|x|^{-2}\ast|u_{lo}|^2)u_{lo})\\
           u_{lo}(t_0)=u_{lo}(a).
           \end{array}
           \right.
\end{eqnarray*}
Since \eqref{5.30} and $v(t_0)=u_{lo}(t_0)$, in order to use Lemma
\ref{lemma3.3}, we only need to show the error term
\[
e=P_{lo}(|u|^\frac{4}{n-2}u)
           +P_{lo}[(|x|^{-2}\ast|u|^2)u
           -(|x|^{-2}\ast|u_{lo}|^2)u_{lo}]-P_{hi}((|x|^{-2}\ast|u_{lo}|^2)u_{lo})
\]
is small in $\dot{N}^0([t_0,T]\times\mathbb{R}^n)$.\par
 By using Lemma \ref{lemma},\ \eqref{5.17} and \eqref{5.28}, we have
 \[
 \parallel
 P_{lo}(|u|^\frac{4}{n-2}u)\parallel_{\dot{N}^0([t_0,T]\times\mathbb{R}^n)}\lesssim\parallel
 u\parallel_{Z([t_0,T])}^\theta\parallel
 u\parallel_{\dot{S}^1([t_0,T]\times\mathbb{R}^n)}^{\frac{n+2}{n-2}-\theta}\lesssim\eta_3^\theta(C(\eta_1,\eta_2))^{\frac{n+2}{n-2}-\theta}\leq\eta_2^{1-\delta}
 \]
provided $\eta_3$ is chosen sufficiently small depending on $\eta_1$
and $\eta_2$. By using Bernstein inequality, H\"older inequality,
Hardy-littlewood-Sobolev inequality, \eqref{5.30} and \eqref{5.33},
we have
\begin{eqnarray*}
\parallel
P_{hi}((|x|^{-2}\ast|u_{lo}|^2)u_{lo})\parallel_{\dot{N}^0([t_0,T]\times\mathbb{R}^n)}&\lesssim&\eta_2\parallel
\nabla
P_{hi}((|x|^{-2}\ast|u_{lo}|^2)u_{lo}\parallel_{\dot{N}^0([t_0,T]\times\mathbb{R}^n)}\\
&\lesssim&\eta_2\parallel
u_{lo}\parallel_{U([t_0,T])}^2\parallel\nabla
u_{lo}\parallel_{U([t_0,T])}\\
&\lesssim&\eta_2\parallel
u_{lo}\parallel_{\dot{S}^0([t_0,T]\times\mathbb{R}^n)}^2\parallel\nabla
u_{lo}\parallel_{\dot{S}^1([t_0,T]\times\mathbb{R}^n)}\\
&\lesssim&\eta_2C(M)C(\eta_1)E\\
&\leq&\eta_2^{1-\delta}
\end{eqnarray*}
provided $\eta_2$ is sufficiently small depending on $E,M$ and
$\eta_1$. From H\"older inequality, Hardy-littlewood-Sobolev
inequality, \eqref{5.30} and \eqref{5.35}, one can get
\begin{eqnarray*}
&&\parallel P_{lo}[(|x|^{-2}\ast|u|^2)u
           -(|x|^{-2}\ast|u_{lo}|^2)u_{lo}]\parallel_{\dot{N}^0([t_0,T]\times\mathbb{R}^n)}\\
          &\lesssim&\parallel(|x|^{-2}\ast|u_{lo}|^2)u_{hi}\parallel_{\dot{N}^0([t_0,T]\times\mathbb{R}^n)}
           \\
           &&+\parallel(|x|^{-2}\ast|u_{hi}|^2)u_{hi}\parallel_{\dot{N}^0([t_0,T]\times\mathbb{R}^n)}+
           \parallel(|x|^{-2}\ast|u_{hi}|^2)u_{lo}\parallel_{\dot{N}^0([t_0,T]\times\mathbb{R}^n)}\\
           &\lesssim&\parallel
 u_{lo}\parallel_{\dot{S}^0([t_0,T]\times\mathbb{R}^n)}^2 \parallel
 u_{hi}\parallel_{\dot{S}^0([t_0,T]\times\mathbb{R}^n)}\\
 &&+\parallel
 u_{hi}\parallel_{\dot{S}^0([t_0,T]\times\mathbb{R}^n)}^2 \parallel
 u_{lo}\parallel_{\dot{S}^0([t_0,T]\times\mathbb{R}^n)}+\parallel
 u_{hi}\parallel_{\dot{S}^0([t_0,T]\times\mathbb{R}^n)}^3\\
 &\lesssim&
 C(M)\eta_2C(\eta_1)L(E)+(\eta_2C(\eta_1)L(E))^2C(M)+(\eta_2C(\eta_1)L(E))^3\\
 &\leq&\eta_2^{1-\delta}.
\end{eqnarray*}
Therefore,
\[
\parallel
 e\parallel_{\dot{N}^0([t_0,T]\times\mathbb{R}^n)}\leq3\eta_2^{1-\delta}
\]
and hence, taking $\eta_2$ sufficiently small depending on $M$, we
can apply Lemma \ref{lemma3.3} to get
\[
\parallel
u_{lo}-v\parallel_{\dot{S}^0([t_0,T]\times\mathbb{R}^n)}\leq
C(M)\eta_2^{1-\delta}\leq\eta_2^{1-2\delta}.
\]
Thus \eqref{5.21} is true. Now we turn to prove \eqref{5.24} is
true. The idea is to compare $u_{hi}$ to the energy-critical
$\textsl{NLS}$
\begin{equation}\label{5.37}
 \left\{
\begin{array}{ll}
iw_t+\Delta w=|w|^{\frac{4}{n-2}}w\\
w(t_j)=u_{hi}(t_j)
\end{array}
\right.
\end{equation}
Then, citing the result in \cite{7,11,15}, we know \eqref{5.37} is
globally wellposed and
\begin{equation}
\parallel
w\parallel_{\dot{S}^1(\mathbb{R}\times\mathbb{R}^n)}\leq C(E)
\end{equation}
Using Lemma \ref{lemma 3.6} and \eqref{5.34}, we also get
\[
\parallel
w\parallel_{\dot{S}^0(\mathbb{R}\times\mathbb{R}^n)}\leq
C(E)\parallel u_{hi}(t_j)\parallel_{L_x^2}\lesssim\eta_2C(E)L(E).
\]
$u_{hi}$ satisfies the following initial value problem on the slab
$[t_j,T]\times\mathbb{R}^n$
\begin{eqnarray*}
\left\{\begin{array}{ll}
           (i\partial_t+\Delta)u_{hi}=|u_{hi}|^\frac{4}{n-2}u_{hi}+P_{hi}((|x|^{-2}\ast|u|^2)u)\\
           \hspace{3cm}+P_{hi}(|u|^\frac{4}{n-2}u-|u_{hi}|^\frac{4}{n-2}u_{hi})-P_{lo}(|u_{hi}|^\frac{4}{n-2}u_{hi}),\\
           u_{hi}(t_j)=u_{hi}(t_j).
           \end{array}
           \right.
\end{eqnarray*}
In order to use Lemma \ref{lemma3.4}, we only need to show the error
term
\[
e=P_{hi}((|x|^{-2}\ast|u|^2)u)
           +P_{hi}(|u|^\frac{4}{n-2}u-|u_{hi}|^\frac{4}{n-2}u_{hi})-P_{lo}(|u_{hi}|^\frac{4}{n-2}u_{hi})\\
\]
is small in $\dot{N}^1([t_j,T]\times\mathbb{R}^n)$.\par
 From H\"older, Hardy-Littlewood-Sobolev inequality, \eqref{5.27},\ \eqref{5.29},\ \eqref{5.32},\
 \eqref{5.35} and \eqref{5.36}, we have
\begin{eqnarray*}
\parallel
P_{hi}((|x|^{-2}\ast|u|^2)u\parallel_{\dot{N}^1([t_j,T]\times\mathbb{R}^n)}&\lesssim&\parallel
u\parallel_{U([t_j,T])}^2\parallel\nabla
u\parallel_{U([t_0,T])}\\
&\lesssim&\parallel
u_{hi}\parallel_{\dot{S}^0([t_j,T]\times\mathbb{R}^n)}^2\parallel
u_{hi}\parallel_{\dot{S}^1([t_j,T]\times\mathbb{R}^n)}\\
&&+\parallel
u_{lo}\parallel_{\dot{S}^0([t_j,T]\times\mathbb{R}^n)}^2\parallel
u_{lo}\parallel_{\dot{S}^1([t_j,T]\times\mathbb{R}^n)}\\
&&+\parallel
u_{lo}\parallel_{\dot{S}^0([t_j,T]\times\mathbb{R}^n)}^2\parallel
u_{hi}\parallel_{\dot{S}^1([t_j,T]\times\mathbb{R}^n)}\\
&&+\parallel
u_{hi}\parallel_{\dot{S}^0([t_j,T]\times\mathbb{R}^n)}^2\parallel
u_{lo}\parallel_{\dot{S}^1([t_j,T]\times\mathbb{R}^n)}\\
&\lesssim&(\eta_2C(\eta_1)L(E))^2C(\eta_1)L(E)+\eta_1^2E+\eta_1^2L(E)+(\eta_2L(E))^2E\\
&\leq&\eta_2
\end{eqnarray*}
if $\eta_2$ is sufficiently small depending on $E$ and $\eta_1$.\par
 By using Bernstein inequality, Lemma \ref{lemma},\ \eqref{5.17} and
 \eqref{5.28}, one has
 \begin{eqnarray*}
 \parallel
 P_{lo}(|u_{hi}|^\frac{4}{n-2}u_{hi})\parallel_{\dot{N}^1([t_j,T]\times\mathbb{R}^n)}&\lesssim&\eta_2^{-1}\parallel
 |u_{hi}|^\frac{4}{n-2}u_{hi}\parallel_{\dot{N}^0([t_j,T]\times\mathbb{R}^n)}\\
 &\lesssim&\eta_2^{-1}\parallel
 u\parallel_{Z([t_j,T])}^\theta\parallel
 u\parallel_{\dot{S}^1([t_j,T]\times\mathbb{R}^n)}^{\frac{n+2}{n-2}-\theta}\\
 &\lesssim&\eta_2^{-1}\eta_3^\theta C(\eta_1,\eta_2)\\
 &\leq&\eta_2
 \end{eqnarray*}
if $\eta_3$ is sufficiently small depending on $\eta_1$ and
$\eta_2$.\par
 Now, we'll estimate the last term $\parallel
 P_{hi}(|u|^\frac{4}{n-2}u-|u_{hi}|^\frac{4}{n-2}u_{hi})\parallel_{\dot{N}^1([t_j,T]\times\mathbb{R}^n)}$.
 Since the function $z\rightarrow
 |z|^\frac{4}{n-2}\frac{z^2}{|z|^2}$ is H\"older continuous
 of order $\frac{4}{n-2}$, then
 \begin{eqnarray}
 \parallel
 P_{hi}(|u|^\frac{4}{n-2}u-|u_{hi}|^\frac{4}{n-2}u_{hi})\parallel_{\dot{N}^1([t_j,T]\times\mathbb{R}^n)}
 &\lesssim&\parallel
|u|^\frac{4}{n-2}u-|u_{hi}|^\frac{4}{n-2}u_{hi}\parallel_{\dot{N}^1([t_j,T]\times\mathbb{R}^n)}\nonumber\\
&\lesssim&\parallel |u|^\frac{4}{n-2}\nabla
u-|u_{hi}|^\frac{4}{n-2}\nabla
u_{hi}\parallel_{\dot{N}^0([t_j,T]\times\mathbb{R}^n)}\nonumber\\
&&+\parallel |u|^\frac{4}{n-2}\nabla
u_{lo}\parallel_{\dot{N}^0([t_j,T]\times\mathbb{R}^n)}\nonumber\\
&&+\parallel
|u|^\frac{4}{n-2}\frac{u^2}{|u|^2}-|u_{hi}|^\frac{4}{n-2}\frac{u_{hi}^2}{|u_{hi}|^2}\parallel_{\dot{N}^0([t_j,T]\times\mathbb{R}^n)}\nonumber\\
&\lesssim&\parallel |u|^\frac{4}{n-2}\nabla
u_{lo}\parallel_{\dot{N}^0([t_j,T]\times\mathbb{R}^n)}\label{5.39}\\
&&+\parallel(|u|^\frac{4}{n-2}-|u_{hi}|^\frac{4}{n-2})\nabla
u_{hi}\parallel_{\dot{N}^0([t_j,T]\times\mathbb{R}^n)}\label{5.40}\\
&&+\parallel |u_{lo}|^\frac{4}{n-2}\nabla
u_{hi}\parallel_{\dot{N}^0([t_j,T]\times\mathbb{R}^n)}\label{5.41}.
 \end{eqnarray}
For \eqref{5.39}, from Remark \ref{remark2.1}, Bernstein inequality,
\eqref{5.17},\ \eqref{5.25} and \eqref{5.32}, we have
\begin{eqnarray*}
\parallel |u|^\frac{4}{n-2}\nabla
u_{lo}\parallel_{\dot{N}^0([t_j,T]\times\mathbb{R}^n)}
&\lesssim&\parallel
 u\parallel_{Z([t_j,T])}^\rho\parallel
 u\parallel_{S^1([t_j,T]\times\mathbb{R}^n)}^{\frac{4}{n-2}-\rho}\parallel
 \nabla u_{lo}\parallel_{S^1([t_j,T]\times\mathbb{R}^n)}\\
 &\lesssim&\eta_3^\rho C(\eta_1,\eta_2)\eta_2^{-1}\parallel
 u_{lo}\parallel_{S^1([t_j,T]\times\mathbb{R}^n)}\\
 &\leq&\eta_2
\end{eqnarray*}
if $\eta_3$ is chosen sufficiently small depending on $\eta_1$
and $\eta_2$.\\
 For \eqref{5.40}, when the dimension $3\leq n<6$, by using H\"older inequality, \eqref{5.27},\ \eqref{5.30} and
 \eqref{5.32}, we can get
\begin{eqnarray*}
&&\parallel(|u|^\frac{4}{n-2}-|u_{hi}|^\frac{4}{n-2})\nabla
u_{hi}\parallel_{\dot{N}^0([t_j,T]\times\mathbb{R}^n)}\\
&\lesssim&
\parallel(|u|^\frac{6-n}{n-2}u_{lo}\nabla
u_{hi}\parallel_{\dot{N}^0([t_j,T]\times\mathbb{R}^n)}\\
&\lesssim&\left(\parallel
 u_{hi}\parallel_{\dot{S}^1([t_j,T]\times\mathbb{R}^n)}^\frac{6-n}{n-2}+\parallel
 u_{lo}\parallel_{\dot{S}^1([t_j,T]\times\mathbb{R}^n)}^\frac{6-n}{n-2}\right)\parallel
  \nabla u_{hi}\parallel_{\dot{S}^0([t_j,T]\times\mathbb{R}^n)}\parallel
 u_{lo}\parallel_{W([t_j,T])}\\
 &\lesssim&(L(E)+E)^\frac{6-n}{n-2}\eta_2L(E)\\
 &\leq&\eta_2^\frac{1}{2}
\end{eqnarray*}
provided $\eta_2$ is chosen sufficiently small depending on $E$.\\
When the dimension $n\geq6$, notice the inequality $(a+b)^p\leq
a^p+b^p$ as $a,b\geq0,\ p\leq1$, \eqref{5.27} and \eqref{5.31}, we
have
\begin{eqnarray*}
&&\parallel(|u|^\frac{4}{n-2}-|u_{hi}|^\frac{4}{n-2})\nabla
u_{hi}\parallel_{\dot{N}^0([t_j,T]\times\mathbb{R}^n)}\\
&\lesssim&
\parallel|u_{lo}|^\frac{4}{n-2}\nabla
u_{hi}\parallel_{\dot{N}^0([t_j,T]\times\mathbb{R}^n)}\\
&\lesssim&\parallel
 u_{hi}\parallel_{\dot{S}^1([t_j,T]\times\mathbb{R}^n)}\parallel
 u_{lo}\parallel_{W([t_j,T])}^\frac{4}{n-2}\\
 &\lesssim&L(E)\eta_2^\frac{4}{n-2}\\
 &\leq&\eta_2^\frac{3}{n-2}.
\end{eqnarray*}
Then \eqref{5.41} has been estimated from the above by
$\eta_2^\frac{3}{n-2}$.\par
 Therefore
 \[
\parallel
e\parallel_{\dot{N}^1([t_j,T]\times\mathbb{R}^n)}\leq\eta_2+\eta_2^\frac{1}{2}+2\eta_2^\frac{3}{n-2}\leq\eta_2^\frac{3}{n}
 \]
and hence, taking $\eta_2$ sufficiently small depending on $E$, we
can apply Lemma \ref{lemma3.4} to get
\[
\parallel
u_{hi}-w\parallel_{\dot{S}^1([t_j,T]\times\mathbb{R}^n)}\lesssim\eta_1^c
\]
for a small constant $c>0$ depending only on the dimension $n$. So
we can obtain
\[
\parallel
u_{hi}\parallel_{\dot{S}^1([t_j,T]\times\mathbb{R}^n)}\leq\parallel
u_{hi}-w\parallel_{\dot{S}^1([t_j,T]\times\mathbb{R}^n)}+\parallel
w\parallel_{\dot{S}^1([t_j,T]\times\mathbb{R}^n)}\lesssim\eta_1^c+C(E)\leq
L(E)
\]
Choosing $L(E)$ is sufficiently large.\par
 Finally, \eqref{5.25} follows from
 \begin{eqnarray*}
\parallel
u\parallel_{S^1([t_0,T]\times\mathbb{R}^n)}&\leq&\parallel
u_{hi}\parallel_{S^1([t_0,T]\times\mathbb{R}^n)}+\parallel
u_{lo}\parallel_{S^1([t_0,T]\times\mathbb{R}^n)}\\
&\leq& C(M)+C(\eta_1)E+\eta_2C(\eta_1)L(E)+C(\eta_1)L(E)\\
&\leq& C(\eta_1,\eta_2).
 \end{eqnarray*}
This proves that $\Omega_2\subset\Omega_1$. Hence, by induction
\[
\parallel
u\parallel_{S^1(J_{k_0}\times\mathbb{R}^n)}\leq C(\eta_1,\eta_2)
\]
As $J_{k_0}$ is arbitrary and the total number of intervals $J_k$ is
$K=K(E,M,\eta_3)$, put  these bounds together we obtain
\[
\parallel
u\parallel_{S^1(\mathbb{R}\times\mathbb{R}^n)}\leq
C(\eta_1,\eta_2,\eta_3)=C(E,M).
\]
\subsection{Global bounds in the case: $p=\frac{4}{n-2},\ 2\leq\gamma<4$ with $\gamma<n$ and $\lambda_1\cdot\lambda_2<0$ or
$\frac{4}{n}\leq p<\frac{4}{n-2},\ \gamma=4$ with $\gamma<n$ and
$\lambda_1\cdot\lambda_2<0$} \ \quad The approaches for both cases
are the same, so we only prove the first case here. Without loss of
generality, let $|\lambda_1|=|\lambda_2|=1$.\par In this case, we'll
view $u$ the perturbation to the energy-critical problem
\begin{equation*}
 \left\{
\begin{array}{ll}
iw_t+\Delta w=|w|^{\frac{4}{n-2}}w\\
w(0)=u_{hi}(0)
\end{array}
\right.
\end{equation*}
which  is globally well-posedness by \cite{7,11,15} and
\begin{equation}\label{5.42}
\parallel
w\parallel_{\dot{S}^1(\mathbb{R}\times\mathbb{R}^n)}\leq C(E,M).
\end{equation}
By Lemma \ref{lemma 3.6}, \eqref{5.42} implies
\begin{equation}\label{5.43}
\parallel
w\parallel_{\dot{S}^0(\mathbb{R}\times\mathbb{R}^n)}\leq
C(E,M)\parallel u_0\parallel_{L_x^2}\leq C(E,M)M^\frac{1}{2}.
\end{equation}
\begin{definition}
$\dot{D}^0(I):=V(I)\cap U(I)\cap
L_T^\frac{2(n+2)}{n-2}L_x^\frac{2(n+2)}{n^2+4}$.
\end{definition}
It is easy to know that
\begin{eqnarray}
\parallel(|x|^{-\gamma}\ast|u|^2)u\parallel_{\dot{N}^k(I\times\mathbb{R}^n)}&\lesssim&\parallel
u\parallel_{\dot{D}^k(I)}\parallel
u\parallel_{\dot{D}^0(I)}^{4-\gamma}\parallel
u\parallel_{\dot{D}^1(I)}^{\gamma-2}\label{5.44}\\
\parallel
 |u|^{\frac{4}{n-2}}u\parallel_{\dot{N}^k(I\times\mathbb{R}^n)}&\lesssim&\parallel
u\parallel_{\dot{D}^1(I)}^{\frac{4}{n-2}}\parallel
u\parallel_{\dot{D}^k(I)},\label{5.45}
\end{eqnarray}
where $k=0,1$.\\
Split $\mathbb{R}$ into $J=J(E,M,\eta)$ subintervals
$I_j=[t_j,t_{j+1}]$ such that
\[
\parallel
u\parallel_{\dot{D}^1(I_j)}\sim\eta,
\]
where $\eta>0$ be a small constant to be chosen later.\par
 Moreover, choosing $M$ sufficiently small depending on $E$ and
 $\eta$, in view of \eqref{5.43}, we may assume
\[
\parallel
w\parallel_{\dot{S}^0(\mathbb{R}\times\mathbb{R}^n)}\leq\eta.
\]
Then, we get
\begin{equation}\label{5.46}
\parallel
u\parallel_{D^1(I_j)}\sim\eta.
\end{equation}
In fact, on each slab $I_j\times\mathbb{R}^n$, we have
\begin{eqnarray}\label{5.47}
\parallel
e^{i(t-t_j)\Delta}w(t_j)\parallel_{D^1(I_j)}\leq\parallel
w\parallel_{D^1(I_j)}+C\parallel
w\parallel_{D^1(I_j)}^\frac{n+2}{n-2}\leq\eta+C\eta^\frac{n+2}{n-2}\leq2\eta
\end{eqnarray}
if $\eta$ is sufficiently small.\par
 Let $I_0=[t_0,t_1]$. Since $w(t_0)=u(t_0)=u_0$, by using Strichartz
 estimates, \eqref{5.44},\ \eqref{5.45} and \eqref{5.47}, we have
\[
\parallel
u\parallel_{D^1(I_0)}\leq2\eta+C\parallel
w\parallel_{D^1(I_0)}^\frac{n+2}{n-2}+C\parallel
w\parallel_{D^1(I_0)}^3.
\]
By a standard continuity argument, this yields
\begin{equation}\label{5.48}
\parallel
u\parallel_{D^1(I_0)}\leq4\eta
\end{equation}
if $\eta$ is chosen sufficiently small.\par
 On the other way, from Strichartz estimates, \eqref{5.44},\ \eqref{5.45} and
 \eqref{5.48}, we have
\begin{eqnarray*}
\parallel
u\parallel_{\dot{D}^0(I_0)}&\lesssim&M^\frac{1}{2}+\parallel
u\parallel_{\dot{D}^1(I_0)}^\frac{4}{n-2}\parallel
u\parallel_{\dot{D}^0(I_0)}+\parallel
u\parallel_{\dot{D}^0(I_0)}^{5-\gamma}\parallel
u\parallel_{\dot{D}^1(I_0)}^{\gamma-2}\\
&\lesssim&M^\frac{1}{2}+\eta^\frac{4}{n-2}\parallel
u\parallel_{\dot{D}^0(I_0)}+\parallel
u\parallel_{\dot{D}^0(I_0)}^{5-\gamma}\eta^{\gamma-2}.
\end{eqnarray*}
Therefore, choosing $\eta$ sufficiently small and $\gamma<4$, we get
\[
\parallel
u\parallel_{\dot{D}^0(I_0)}\lesssim M^\frac{1}{2}.
\]
In order to apply Lemma \ref{lemma3.4}, we need to show the error
$(|x|^{-\gamma}\ast|u|^2)u$ is small on the norm
$\dot{N}^1(I_0\times\mathbb{R}^n)$. In fact, by
\[
\parallel(|x|^{-\gamma}\ast|u|^2)u\parallel_{\dot{N}^1(I_0\times\mathbb{R}^n)}\lesssim
\parallel
u\parallel_{\dot{D}^1(I_0)}^{\gamma-1}\parallel
u\parallel_{\dot{D}^0(I_0)}^{4-\gamma}\lesssim\eta^{\gamma-1}M^{2-\frac{\gamma}{2}}\leq
M^{\delta_0}
\]
for a small constant $\delta_0>0$. Then taking $M$ sufficiently
small depending on $E$ and $\eta$, by Lemma \ref{lemma3.4} we get
\[
\parallel
u-w\parallel_{\dot{S}^1(I_0\times\mathbb{R}^n)}\leq M^{c\delta_0}
\]
for a small constant $c>0$ that depends only on the dimension $n$.
Strichartz estimate implies
\begin{eqnarray}\label{5.49}
\parallel
e^{i(t-t_1)\Delta}(u(t_1)-w(t_1))\parallel_{\dot{S}^1(I_1\times\mathbb{R}^n)}\leq
M^{c\delta_0}.
\end{eqnarray}
Now, we turn to the interval $I_1=[t_1,t_2]$. By using Strichartz
estimate, \eqref{5.44},\ \eqref{5.45},\ \eqref{5.47} and
\eqref{5.49}, one can get
\begin{eqnarray*}
\parallel
u\parallel_{D^1(I_1)}&\leq&\parallel
e^{i(t-t_1)\Delta}u(t_1)\parallel_{\dot{D}^0(I_1)}+\parallel
e^{i(t-t_1)\Delta}(u(t_1)-w(t_1))\parallel_{\dot{D}^1(I_1)}\\
&&+\parallel e^{i(t-t_1)\Delta}w(t_1)\parallel_{\dot{D}^1(I_1)}
+C\parallel u\parallel_{D^1(I_1)}^\frac{n+2}{n-2}+C\parallel
u\parallel_{D^1(I_1)}^3\\
&\lesssim&M^\frac{1}{2}+M^{c\delta_0}+\eta+\parallel
u\parallel_{D^1(I_1)}^\frac{n+2}{n-2}+\parallel
u\parallel_{D^1(I_1)}^3.
\end{eqnarray*}
Choosing $\eta,M$ sufficiently small, by a standard continuity
argument, we obtain
\[
\parallel
u\parallel_{D^1(I_1)}\leq4\eta.
\]
Moreover, arguing as above, we also get
\[
\parallel
u\parallel_{\dot{D}^0(I_1)}\lesssim M^\frac{1}{2}.
\]
For $M$ sufficiently small, we can apply Lemma \ref{lemma3.4} to
obtain
\[
\parallel
u-w\parallel_{\dot{S}^1(I_1\times\mathbb{R}^n)}\leq M^{c\delta_1}
\]
for a small constant $0<\delta_1<\delta_0$.\par
 By using the induction argument, choosing $M$ smaller at every step, we obtain
 \[
\parallel
u\parallel_{D^1(I_j)}\leq4\eta.
 \]
Summing these estimates over all intervals $I_j$ and for the total
number of these intervals is $J=J(E,M,\eta)$, we get
\[
\parallel
u\parallel_{D^1(\mathbb{R})}\lesssim J\eta\leq C(E,M).
\]
By  using Strichartz estimate, \eqref{5.44} and \eqref{5.45}, we get
\[
\parallel
u\parallel_{S^1(\mathbb{R}\times\mathbb{R}^n)}\lesssim\parallel
u_0\parallel_{H_x^1}+\parallel
u\parallel_{D^1(\mathbb{R})}^\frac{n+2}{n-2}+\parallel
u\parallel_{D^1(\mathbb{R})}^3\lesssim M+E+C(E)\leq C(E,M).
\]
\subsection{Global bounds in the case: $\frac{4}{n}\leq p<\frac{4}{n-2},\ 2\leq\gamma<4$ with $\gamma<n$ and $\lambda_1\cdot\lambda_2<0$ or
$p=\frac{4}{n},\ \gamma=2$ and $\lambda_1,\lambda_2>0$} \ \quad The
approaches for both cases are similar with the subsection5.6, the
only differentia is to compare $u$ to the free Schr\"odinger
equation
\[
i\widetilde{u}_t+\Delta\widetilde{u}=0,\qquad \widetilde{u}(0)=u_0.
\]
By Strichartz estimate, the global solution $\widetilde{u}$ obeys
the spacetime estimates
\begin{eqnarray*}
\parallel
\widetilde{u}\parallel_{S^1(\mathbb{R}\times\mathbb{R}^n)}&\lesssim&\parallel
u_0\parallel_{\dot{H}_x^1}\leq C(E,M),\\
\parallel
\widetilde{u}\parallel_{S^0(\mathbb{R}\times\mathbb{R}^n)}&\lesssim&\parallel
u_0\parallel_{L_x^2}\lesssim M^\frac{1}{2}.
\end{eqnarray*}
At this time,we define
\begin{definition}
$\dot{D}^0(I):=V(I)\cap U(I)\cap
L_t^\frac{2(n+2)}{n-2}L_x^\frac{2(n+2)}{n^2+4}$.
\end{definition}
By the similar method of subsection5.6, it is not difficult to know
that
\[
\parallel
u\parallel_{S^1(\mathbb{R}\times\mathbb{R}^n)}\leq C(E,M).
\]
\subsection{Finite global Strichartz norms imply scattering}
\ \quad At last, we'll show that finite global Strichartz norms
imply scattering. For simplicity, we only construct the scattering
state in the positive time direction. Similar arguments can be used
to construct the scattering state in the negative time
direction.\par For $0<t<\infty$, define
\[
u_+(t)=u_0-i\int_0^te^{-is\Delta}\left(\lambda_1|u|^pu+\lambda_2(|x|^{-\gamma}\ast|u|^2)u\right)\,ds.
\]
Since $u\in S^1(\mathbb{R}\times\mathbb{R}^n)$,  Strichartz
estimates and Lemma \ref{lemma2.8} show that $u_+(t)\in H_x^1$ for
all $t\in \mathbb{R}^+$, and for $0<\tau<t$, we have
\begin{eqnarray*}
\parallel u_+(t)-u_+(\tau)\parallel_{H_x^1}&\lesssim&\parallel\int_\tau^t
e^{i(t-s)\Delta}\left(\lambda_1|u|^pu+\lambda_2(|x|^{-\gamma}\ast|u|^2)u\right)\,ds\parallel_{L_t^\infty H_x^1([\tau,t]\times\mathbb{R}^n)}\\
&\lesssim&\parallel u\parallel_{V([\tau,t])}^{2-\frac{(n-2)p}{2}}\parallel u\parallel_{W([\tau,t])}^{\frac{np}{2}-2}\parallel(1+|\nabla|)u\parallel_{V([\tau,t])}\\
&&+\parallel u\parallel_{U([\tau,t])}^{4-\gamma}\parallel
u\parallel_{L_t^6L_x^\frac{6n}{3n-8}([\tau,t]\times\mathbb{R}^n)}^{\gamma-2}\parallel(1+|\nabla|)u\parallel_{U([\tau,t])},
\end{eqnarray*}
and for $\varepsilon>0$, there exists $T_\varepsilon>0$ such that
\[
\parallel u_+(t)-u_+(\tau)\parallel_{H_x^1}\leq\varepsilon
\]
for any $t,\tau>T_\varepsilon$. Thus $u_+(t)$ converges to some
function $u_+$ in $H_x^1$ as $t\rightarrow+\infty$. In fact
\[
u_+:=u_0-i\int_0^\infty
e^{-is\Delta}\left(\lambda_1|u|^pu+\lambda_2(|x|^{-\gamma}\ast|u|^2)u\right)\,ds.
\]
At last, the scattering follows from
\begin{eqnarray*}
\parallel e^{-it\Delta}u(t)-u_+\parallel_{H_x^1}&=&\parallel\int_t^\infty
e^{-is\Delta}\left(\lambda_1|u|^pu+\lambda_2(|x|^{-\gamma}\ast|u|^2)u\right)\,ds\parallel_{H_x^1}\\
&=&\parallel\int_t^\infty
e^{i(t-s)\Delta}\left(\lambda_1|u|^pu+\lambda_2(|x|^{-\gamma}\ast|u|^2)u\right)\,ds\parallel_{H_x^1}\\
&\lesssim&\parallel u\parallel_{V([t,\infty))}^{2-\frac{(n-2)p}{2}}\parallel u\parallel_{W([t,\infty))}^{\frac{np}{2}-2}\parallel(1+|\nabla|)u\parallel_{V([t,\infty))}\\
&&+\parallel u\parallel_{U([t,\infty))}^{4-\gamma}\parallel
u\parallel_{L_t^6L_x^\frac{6n}{3n-8}([t,\infty)\times\mathbb{R}^n)}^{\gamma-2}\parallel(1+|\nabla|)u\parallel_{U([t,\infty))},
\end{eqnarray*}
because the right term obviously tends to $0$ as
$t\rightarrow+\infty$. The other properties follow from conservation
of mass and energy.
\section{Blowup results}
 \ \quad From the Theorem \ref{1.1}, we can find that there are still many
 regions where the global well-posedness holds need a few additional
 conditions, for example small energy and small mass. In this
 section, we'll show that on these regions, under suitable assumptions
 the solution of \eqref{1} will blow up in finite time. We follow
 the method of Glassey \cite{9}, which is essentially a convexity
 method. We consider the variance
 \[
f(t)=\int_{\mathbb{R}^n}|x|^2|u(t,x)|^2\,dx.
 \]
For strong $H_x^1-$solution $u$ to \eqref{1} with initial datum
$u_0\in\Sigma$, it is well known that $f\in
C^2(-\mbox{T}_{\mbox{min}},\,\mbox{T}_{\mbox{max}})$ and we
have(see, for example the Chapter 6 of \cite{19});
\begin{lemma}
For all $t\in(-\mbox{T}_{\mbox{min}},\,\mbox{T}_{\mbox{max}})$, we
have
\[
f'(t)=4Im\int\bar{u}x\cdot\nabla u\,dx
\]
and
\begin{equation}\label{6.1}
f''(t)=16E+\frac{4np-16}{p+2}\lambda_1\parallel
u\parallel_{L_x^{p+2}}^{p+2}+2\lambda_2(\gamma-2)\int(|x|^{-\gamma}\ast
|u|^2)|u|^2\,dx.
\end{equation}
\end{lemma}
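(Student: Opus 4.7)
The plan is to derive both identities by direct differentiation under the integral sign, substituting $u_t$ from the equation and repeatedly integrating by parts. The hypothesis $u_0 \in \Sigma$ together with local well-posedness gives $u \in C^1_t \Sigma$, so $f$ is $C^2$ and all the manipulations below are rigorously justified (if one is fussy about regularity, one first proves the identity for smooth data by approximating $u_0^{(m)} \to u_0$ in $\Sigma$ and then passes to the limit via the continuity part of Proposition \ref{pro3.1}).

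For the first identity, I would write $\partial_t|u|^2 = 2\,\mathrm{Re}(\bar u u_t)$ and use $u_t = i\Delta u - i(\lambda_1|u|^p u + \lambda_2(|x|^{-\gamma}\ast|u|^2)u)$. The nonlinear terms drop out of the real part because $\bar u \cdot |u|^p u = |u|^{p+2}$ and $\bar u \cdot (|x|^{-\gamma}\ast|u|^2)u = (|x|^{-\gamma}\ast|u|^2)|u|^2$ are both real, so
\[
f'(t) = -2\int |x|^2\,\mathrm{Im}(\bar u\,\Delta u)\,dx.
\]
Writing $\mathrm{Im}(\bar u\,\Delta u) = \nabla\!\cdot\!\mathrm{Im}(\bar u\,\nabla u)$ and integrating by parts (using $\nabla|x|^2 = 2x$) yields the stated formula for $f'$.

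For $f''$, differentiate $f'(t) = 4\,\mathrm{Im}\int\bar u\,x\!\cdot\!\nabla u\,dx$ and again substitute $u_t$. Group the resulting terms by nonlinearity. The purely dispersive part produces, after two integrations by parts, the classical virial term $8\|\nabla u\|_{L^2}^2$. For the power-type contribution, one computes $\mathrm{Im}\int (\bar u_t\, x\!\cdot\!\nabla u + \bar u\, x\!\cdot\!\nabla u_t)\,dx$ with $u_t$ replaced by the power term alone; the key step is the Pohozaev-type identity $\int x\!\cdot\!\nabla(|u|^{p+2})\,dx = -n\int|u|^{p+2}\,dx$, which gives the contribution $\frac{4np\lambda_1}{p+2}\|u\|_{L^{p+2}}^{p+2}$. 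For the Hartree contribution, the cleanest route is to symmetrize: write
\[
\int (|x|^{-\gamma}\ast|u|^2)|u|^2\,dx = \iint \frac{|u(x)|^2|u(y)|^2}{|x-y|^\gamma}\,dx\,dy,
\]
and exploit the fact that under the infinitesimal dilation associated with $x\!\cdot\!\nabla$, the kernel $|x-y|^{-\gamma}$ scales with weight $-\gamma$ when applied simultaneously to both variables. This produces $2\gamma\lambda_2\int(|x|^{-\gamma}\ast|u|^2)|u|^2\,dx$ as the Hartree contribution to $f''$.

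Adding the three pieces gives
\[
f''(t) = 8\|\nabla u\|_{L^2}^2 + \tfrac{4np\lambda_1}{p+2}\|u\|_{L^{p+2}}^{p+2} + 2\gamma\lambda_2\!\int(|x|^{-\gamma}\ast|u|^2)|u|^2\,dx.
\]
Finally, I would use the definition of $E$ to substitute $8\|\nabla u\|_{L^2}^2 = 16E - \tfrac{16\lambda_1}{p+2}\|u\|_{L^{p+2}}^{p+2} - 4\lambda_2\int(|x|^{-\gamma}\ast|u|^2)|u|^2\,dx$; combining coefficients gives exactly \eqref{6.1}. The main obstacle is the Hartree virial identity, because the convolution couples the two spatial variables; the symmetrization trick above is what makes it work cleanly, and it is worth writing that step out carefully while the other manipulations are routine Pohozaev/integration-by-parts calculations.
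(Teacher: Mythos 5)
Your proposal is correct and follows the standard virial/Pohozaev computation. The paper itself gives no proof of this lemma; it simply notes the identity is "well known" and points to Chapter~6 of Cazenave's book, which is where this calculation (including the $x\leftrightarrow y$ symmetrization for the Hartree term) is carried out. In particular, your symmetrization step is exactly right: averaging the double integral over the swap $x\leftrightarrow y$ turns $x\cdot(x-y)$ into $\tfrac12|x-y|^2$, so $\int(x\cdot\nabla V)|u|^2\,dx=-\tfrac{\gamma}{2}\int V|u|^2\,dx$ with $V=|x|^{-\gamma}\ast|u|^2$, which yields the coefficient $2\gamma\lambda_2$ before substituting in the energy. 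Combined with the power-type contribution $\tfrac{4np\lambda_1}{p+2}\|u\|_{L^{p+2}}^{p+2}$ and $8\|\nabla u\|_{L^2}^2$, your substitution of $E$ gives precisely \eqref{6.1}, so there is nothing to fix.
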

\par
If we can find, for all
$t\in(-\mbox{T}_{\mbox{min}},\,\mbox{T}_{\mbox{max}})$ there exists
a constant $A$ such that: $f''(t)\leq A$, then we have
\begin{equation}\label{6.2}
\parallel xu\parallel_{L^2}^2\leq \theta(t)
\end{equation}
where
\[
\theta(t)=\parallel
x\varphi\parallel_{L^2}^2+4tIm\int\bar{\varphi}x\cdot\nabla
\varphi\,dx+\frac{1}{2}t^2A.
\]
If assume $A$ is negative, observe that $\theta(t)$ is a
second-degree polynomial, then $\theta(t)<0$ for $|t|$ large enough.
Since $\parallel xu\parallel_{L^2}^2\geq0$, we deduce from
\eqref{6.1} that both $\mbox{T}_{\mbox{min}}$ and
$\mbox{T}_{\mbox{max}}$ are finite. However, it is not a necessary
and sufficient condition so that $\theta(t)$ takes negative values
that $A$ is negative. A necessary and sufficient condition so that
$\theta(t)$ takes negative values is that
\[
8(Im\int\bar{\varphi}x\cdot\nabla \varphi\,dx)^2>A\parallel
x\varphi\parallel_{L^2}^2.
\]
But in many states, we can't get both $\mbox{T}_{\mbox{min}}$ and
$\mbox{T}_{\mbox{max}}$ are finite. People who are interested in it
can see the Chapter 6 of \cite{19}.\par
 In the following, we'll find the negative constant $A$ such that $f''(t)\leq
 A$:\\
 case (1): $\lambda_1<0,\ \lambda_2>0,\ \frac{4}{n}\leq p\leq\frac{4}{n-2},\
 0<\gamma\leq\frac{np}{2}$ and $E<0$.\\
 By using \eqref{6.1}, the conservation of energy and our assumption, we
 get
 \begin{eqnarray}
f''(t)&=&16E+(4np-16)\{E-\frac{1}{2}\parallel \nabla
u\parallel_{L^2}^2-\frac{\lambda_2}{4}\int(|x|^{-\gamma}\ast
|u|^2)|u|^2\,dx\}\nonumber\\
&&+2\lambda_2(\gamma-2)\int(|x|^{-\gamma}\ast
|u|^2)|u|^2\,dx\nonumber\\
&=&4npE-(2np-8)\parallel \nabla
u\parallel_{L^2}^2-(np-2\gamma)\lambda_2\int(|x|^{-\gamma}\ast
|u|^2)|u|^2\,dx\label{6.3}\\
&\leq&4npE.\nonumber
 \end{eqnarray}
Let $A:=4npE<0$, then we find the negative constant $A$.\\
case (2): $\lambda_1>0,\ \lambda_2<0,\ \frac{2\gamma}{n}\leq
p\leq\frac{4}{n-2},\
 2\leq\gamma\leq4$ and $E<0$.\\
From \eqref{6.1}, the conservation of energy and our assumption, we
 get
\begin{eqnarray}
f''(t)&=&16E+\frac{4np-16}{p+2}\lambda_1\parallel
u\parallel_{L_x^{p+2}}^{p+2}\nonumber\\
&&+8(\gamma-2)\{E-\frac{1}{2}\parallel \nabla
u\parallel_{L^2}^2-\frac{\lambda_1}{p+2}\parallel
u\parallel_{L_x^{p+2}}^{p+2}\}\nonumber\\
&=&8\gamma E-4(\gamma-2)\parallel \nabla
u\parallel_{L^2}^2+\frac{4np-8\gamma}{p+2}\lambda_1\parallel
u\parallel_{L_x^{p+2}}^{p+2}\label{6.4}\\
&\leq&8\gamma E.\nonumber
 \end{eqnarray}
Let $A:=8\gamma E<0$, then we find the negative constant $A$.\\
case (3): $\lambda_1<0,\ \lambda_2<0,\ \frac{4}{n}\leq
p\leq\frac{4}{n-2},\
 2\leq\gamma\leq4$ and $E<0$.\\
When $\gamma\geq\frac{np}{2}$, using \eqref{6.3} and our assumption,
we have
\[
f''(t)\leq 4npE.
\]
When $\gamma<\frac{np}{2}$, from \eqref{6.4} and our assumption, we
have
\[
f''(t)\leq8\gamma E.
\]
So we also find the negative constant $A$.\\
case (4) $\lambda_1<0,\ \lambda_2<0,\ 0<\gamma<2,\
\frac{4}{n}<p\leq\frac{4}{n-2}$ and $4npE+C(M)<0$\\
By using \eqref{4.2} and Young's inequality, we have, when
$\gamma<2$,
\[
\parallel \nabla
u\parallel_{L^2}^\gamma\leq\delta\parallel \nabla
u\parallel_{L^2}^2+C(\delta).
\]
From \eqref{6.3} and our assumption, we have
\[
f''(t)\leq4npE+[C(np-2\gamma)|\lambda_2|\delta-(2np-8)]\parallel
\nabla u\parallel_{L^2}^2+C(np-2\gamma)|\lambda_2|C(\delta)\parallel
u\parallel_{L^2}^{4-\gamma}.
\]
Choosing $\delta$ sufficiently small, then
\[
f''(t)\leq4npE+C(M).
\]
Let $A:=4npE+C(M)<0$, then we find the negative constant $A$.\\
case (5) $\lambda_1<0,\ \lambda_2<0,\ 2<\gamma\leq4,\
0<p<\frac{4}{n}$ and $8\gamma E+C(M)<0$.\\
From \eqref{4.1} and Young's inequality, we have, when
$p<\frac{4}{n}$,
\[
\parallel
\nabla u\parallel_{L^2}^\frac{np}{2}\leq\delta\parallel \nabla
u\parallel_{L^2}^2+C(\delta).
\]
From \eqref{6.4} and our assumption, we have
\[
f''(t)\leq8\gamma E-4(\gamma-2)\parallel \nabla
u\parallel_{L^2}^2+\left(\frac{4np-8\gamma}{p+2}\lambda_1C\delta\parallel
\nabla
u\parallel_{L^2}^2+\frac{4np-8\gamma}{p+2}\lambda_1C(\delta)\right)\parallel
u\parallel_{L^2}^\frac{4-(n-2)p}{2}.
\]
Choosing $\delta$ sufficiently small, then
\[
f''(t)\leq8\gamma E+C(M).
\]
Let $A:=8\gamma E+C(M)<0$, then we find the negative constant $A$.\\


\begin{thebibliography}{99}




\bibitem{1} Aubin, Thierry, $\acute{E}$quations diff$\acute{e}$rentielles
non lin$\acute{e}$aires et probl$\grave{e}$me de Yamabe concernant la courbure
scalaire, J.Math. Pures Appl.(9), 55, 1976, 3, 269-296.




\bibitem{10} J. Bourgain, Global well-posedness of defocusing 3D critical $\textsl{NLS}$ in the radial case, J.Amer.Math.Soc\quad
12(1999),145-171.

\bibitem{11} J. Colliander, M. Keel, G. Staffilani, H. Takaoka, T. Tao, Global existence and scattering for the
energy-critical nonlinear Schr\"odinger equation in $\mathbb{R}^3$,
Annals of Math., to appear.

\bibitem{12} J. Colliander, M. Keel, G. Staffilani, H. Takaoka, T.Tao, Scattering for the 3D cubic $\textsl{NLS}$
below the energy norm, 2004 http://arXiv.org/abs/math.AP/0301260.

\bibitem{13} J. Colliander, M. Grillakis and N. Tzirakis, Tensor products and corrolation estimates with applications
to nonlinear Schr\"odinger equations,preprint.



\bibitem{19} T. Cazenave, Semilinear Schr\"odinger equation, Courant Lecture Notes in Mathematics,10.
New York University, Courant Institute of Mathematical Sciences, AMS.2003

\bibitem{20} T. Cazenave, F.B. Weissler, Critical nonlinear Schr\"odinger
 equation, Non, Anal. TMA 14(1990),807-836.


\bibitem{26} Talenti Giorgio, Best constant in Sobolev inequality, Ann.Mat.Pura Appl.,(4)110 1976,353-372.

\bibitem{9} R.T. Glassey,  On the blowing up of solution to the Cauchy problem for nonlinear Schr\"odinger
equations, J.Math.phys.18:1794-1797,1977.

\bibitem{14} J. Holmer, N. Tzirakis, Asymptotically linear solutions in $H^1$ of the 2D defocusing nonlinear Schr\"odinger
and Hartree equations, arXiv:0805.2925v1[math.AP]19 May 2008.

\bibitem{21} T. Kato, Perturbation theory for linear operators, 2nd ed.Springer-Verlag, Berlin, 1980.

\bibitem{22} T. Kato, On nonlinear Schr\"odinger equations, Ann,Inst.H.Poincare phys.Theor.46(1987),113-129.

\bibitem{23} T. Kato, On nonlinear Schr\"odinger equations, $\prod\,.H^s$-solutions and unconditional well-posedness,
J.d'Analyse.Math.67,(1995),281-306.


\bibitem{16} M. Keel, T. Tao, Endpoint Strichartz Estimates, Amer.Math.J.120(1998),955-980.


\bibitem{2} Carlos E. Kenig, Frank Merle, Global wellposedness, scattering and blow-up
for the energy-critical, focusing, non-linear Schr\"odinger equation
in the radial case, arXiv: math/0610266

\bibitem{8} E.H. Lieb, Sharp constants in the Hardy-Littlewood-Sobolev and related inequalities.
Annal. Math.,118:2(1983),349-374.

\bibitem{17} R. Killip, M.Visan, The focusing energy-critical nonlinear Schr\"odinger equation in five
and high, preprint arXiv:0804.1018.

\bibitem{18} R. Killip, M. Visan, Xiaoyi Zhang, The mass-critical nonlinear Schr\"odinger equation with
radial data in dimensions three and higher, arXiv:0708.0849v1
[math.AP] 6 Aug 2007

\bibitem{3} Changxing Miao, Guixiang Xu, Lifeng Zhao,
Global wellposedness and scattering for the energy-critical,
defocusing Hartree equation in $\mathbb{R}^{1+n}$, arXiv:
0707.3254v1 [math.AP] 22 Jul 2007.

\bibitem{4} Changxing Miao, Guixiang Xu, Lifeng Zhao, Global wellposedness, scattering and blow-up
for the energy-critical, focusing Hartree equation in the radial
case, arXiv: 0801.0019v2 [math.AP] 3 Jan 2008.

\bibitem{5} Changxing Miao, Guixiang Xu, Lifeng Zhao, Global wellposedness and scattering
for the mass-critical Hartree equation with radial datum, arXiv:
0801.3925v1 [math.AP] 25 Jan 2008.

\bibitem{6} Changxing Miao, Guixiang Xu, Lifeng Zhao, The Cauchy problem of the Hartree equation,
J. PDEs,21(2008),22-44.

\bibitem{7} E. Ryckman, M. Visan, Global well-posedness and scattering for the defocusing energy-critical nonlinear
Schr\"odinger equation in $\mathbb{R}^{1+4}$, arXiv: math/0501462.


\bibitem{24} T. Tao, M. Visan, X. Zhang, The nonlinear Schr\"odinger equation with combined power-type nonlinearities,
arXiv:math.AP/0511070 v1 3Nov 2005.

\bibitem{25} T. Tao, M. Visan, Stability of energy-critical nonlinear Schr\"odinger equations in high dimensions, Electron
.J.Diff,Eqns.,118(2005),1-28.


\bibitem{15} M. Visan, The defocusing energy-critical Schr\"odinger equation in higher dimensions, preprint
math.AP/0508298.

\end{thebibliography}
\end{document}